\newtheorem{theorem}{Theorem}[section]
\newtheorem{corollary}[theorem]{Corollary}
\newtheorem{proposition}[theorem]{Proposition}
\newtheorem{lemma}[theorem]{Lemma}
\theoremstyle{definition}
\theoremstyle{remark}
\newtheorem{remark}[theorem]{Remark}
\DeclareMathSymbol{\widetildesym}{\mathord}{largesymbols}{"65}
\newcommand\lowerwidetildesym{
  \text{\smash{\raisebox{-1.3ex}{
    $\widetildesym$}}}}
\newcommand\fixwidetilde[1]{
  \mathchoice
    {\accentset{\displaystyle\lowerwidetildesym}{#1}}
    {\accentset{\textstyle\lowerwidetildesym}{#1}}
    {\accentset{\scriptstyle\lowerwidetildesym}{#1}}
    {\accentset{\scriptscriptstyle\lowerwidetildesym}{#1}}
}
\newcommand{\rcoev}{\stackrel{\longrightarrow}{\operatorname{coev}}}
\newcommand{\rev}{\stackrel{\longrightarrow}{\operatorname{ev}}}
\newcommand{\lev}{\stackrel{\longleftarrow}{\operatorname{ev}}}
\newcommand{\lcoev}{\stackrel{\longleftarrow}{\operatorname{coev}}}
\newcommand{\piv}{g}
\newcommand{\X}{X}
\newcommand{\kk}{\Bbbk}
\newcommand{\wt}{\fixwidetilde}
\newcommand{\bp}[1]{{\left(#1\right)}}
\newcommand{\cat}{\mathscr{C}}
\newcommand{\brk}[1]{{{\left\langle{#1}\right\rangle}}}
\newcommand{\C}{\ensuremath{\mathbb{C}}}
\newcommand{\Z}{\ensuremath{\mathbb{Z}}}
\newcommand{\R}{\ensuremath{\mathbb{R}}}
\newcommand{\N}{\ensuremath{\mathbb{N}}}
\newcommand{\slt}{\ensuremath{\mathfrak{sl}_2}}
\newcommand{\End}{\operatorname{End}}
\newcommand{\Hom}{\operatorname{Hom}}
\renewcommand{\sl}{\mathfrak{sl}}
\newcommand{\ptr}{\operatorname{ptr}}
\newcommand{\Proj}{{\operatorname{Proj}}}
\newcommand{\calB}{\mathcal B}
\newcommand{\calC}{\mathcal C}
\newcommand{\calD}{\mathcal D}
\newcommand{\calE}{\mathcal E}
\newcommand{\calL}{\mathcal L}
\newcommand{\calR}{\mathcal R}
\newcommand{\calS}{\mathcal S}
\newcommand{\calV}{\mathcal V}
\newcommand{\calX}{\mathcal X}
\renewcommand{\leq}{\leqslant} 
\renewcommand{\geq}{\geqslant}
\newcommand{\Vect}{\mathrm{Vect}}
\newcommand{\id}{\operatorname{id}}
\renewcommand{\mod}{\operatorname{mod}} 
\newcommand{\Hmod}{\ensuremath{H\text{-}\mod}}
\newcommand{\Span}{\operatorname{Span}}
\newcommand{\rmt}{\operatorname{t}}
\newcommand{\rmH}{\mathrm{H}}
\newcommand{\rmV}{\mathrm{V}}
\newcommand{\rmZ}{\mathrm{Z}}
\newcommand{\HKR}{{\operatorname{H}_H}}
\newcommand{\Hm}{\mathrm{H}'_{\calC}}
\newcommand{\HH}{\operatorname{HH}}
\newcommand{\GK}{\operatorname{GK}}
\newcommand{\Cob}{\mathrm{Cob}}
\newcommand{\adCob}{\check{\mathrm{C}}\mathrm{ob}}
\DeclareRobustCommand{\one}{\mathbin{\text{\includegraphics[height=\heightof{$\mathbf{1}$}]{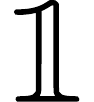}}}}
\DeclareRobustCommand{\bbSigma}{\mathbin{\text{\includegraphics[height=\heightof{$\mathbf{\Sigma}$}]{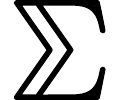}}}}
\newcommand{\bbA}{\mathbb A}
\newcommand{\bbB}{\mathbb B}
\newcommand{\bbD}{\mathbb D}
\newcommand{\bbI}{\mathbb I}
\newcommand{\bbM}{\mathbb M}
\newcommand{\bbS}{\mathbb S}
\newcommand{\op}{\mathrm{op}}
\renewcommand{\Sigma}{\varSigma}
\renewcommand{\epsilon}{\varepsilon}
\newcounter{exo} \newcounter{numexercice}
\renewcommand{\theexo}{\arabic{exo}}
\begin{document}

\raggedbottom

\title{Renormalized Hennings Invariants and 2+1-TQFTs}

\author[M. De Renzi]{Marco De Renzi} \address{Université Paris Diderot -- Paris 7, Sorbonne Paris Cité, IMJ-PRG, UMR 7586 CNRS, F-75013
  Paris, France}  \email{marco.de-renzi@imj-prg.fr}

\author[N. Geer]{Nathan Geer}
\address{Mathematics \& Statistics\\
  Utah State University \\
  Logan, Utah 84322, USA} \email{nathan.geer@gmail.com}

\author[B. Patureau-Mirand]{Bertrand Patureau-Mirand}
\address{Univ. Bretagne - Sud, UMR 6205, LMBA, F-56000 Vannes, France}
\email{bertrand.patureau@univ-ubs.fr}

\thanks{All the authors would like to thank F. Costantino and the CIMI
  Excellence Laboratory for organizing the conference ``Quantum Topology
  and Geometry'' during the thematic semester ``Invariants in
  Low Dimensional Geometry and Topology'' in Toulouse, May 2017, where
  this work was initiated.}\

\begin{abstract}
 We construct non-semisimple $2+1$-TQFTs yielding mapping class group representations in Lyubashenko's spaces.
 In order to do this, we first generalize
 Beliakova, Blanchet and Geer's logarithmic Hennings invariants based on quantum $\mathfrak{sl}_2$ to the setting of 
 finite-dimensional non-degenerate unimodular ribbon Hopf algebras. The tools used for this construction
 are a Hennings-augmented Reshetikhin-Turaev functor and modified traces.  When the Hopf algebra is factorizable, we
 further show that the universal construction of Blanchet, Habegger, Masbaum and Vogel produces a $2+1$-TQFT on
 a not completely rigid monoidal subcategory of cobordisms. 
\end{abstract}

\maketitle
\setcounter{tocdepth}{3}

\date{\today}

\section{Introduction}

 Following Atiyah \cite{A88} a $d+1$-dimensional Topological Quantum
 Field Theory (TQFT) assigns to every closed oriented $d$-dimensional
 manifold $\Sigma$ a vector space $V(\Sigma)$ 
 and assigns to every compact oriented $d+1$-dimensional cobordism $M$ from $\Sigma$ to $\Sigma'$ a linear map
 $V(M): V(\Sigma) \to V(\Sigma')$. These vector spaces and maps should satisfy certain conditions, including tensor multiplicativity 
 with respect to disjoint union and functoriality with respect to gluing of cobordisms.
 Atiyah's axioms can then be stated as follows: a $d+1$-TQFT is a symmetric monoidal functor
 $V$ from a category of cobordisms to the category of vector spaces over a field $\Bbbk$.

 Understanding TQFTs in low dimensions has been especially successful.
 In particular, the Witten-Reshetikhin-Turaev 3-manifold invariants
 \cite{RT91} coming from quantized simple Lie algebras are known to
 extend to $2+1$-TQFTs, see \cite{T94}.  These invariants and their
 TQFT extensions rely on 
 a semisimple version of the representation theory of these algebraic structures.  
 In \cite{H96}, Hennings constructed an invariant of 3-manifolds from 
 certain finite-dimensional quasitriangular Hopf algebras $H$ which need not be semisimple
 (see also \cite{KR95}, where Kauffman and Radford gave a reformulation of the
 invariant avoiding the use of orientations).  This construction is
 similar to the Reshetikhin-Turaev construction, except that Hennings
 uses the algebra $H$ directly instead of its representation theory.
 Hennings' invariant recovers the Reshetikhin-Turaev invariant when $H$
 is semisimple (see Lemma 1 of \cite{K96}).

 If $H$ is not semisimple then Hennings' invariant is zero for manifolds
 with positive first Betti number, see \cite{K98,O95}.  In particular,
 Hennings invariant vanishes on $S^1 \times S^2$, so it cannot be extended
 to a TQFT with Atiyah's axioms and cobordisms, see \cite{K96}. However,
 Lyubashenko and Kerler \cite{L95a, L95b, KL01, K96} showed that this 
 invariant can still be associated to representations of the genus $g$
 mapping class group in the space of invariants of the $g$-fold 
 tensor product of the coadjoint representation of $H$ and, more generally, to a TQFT for
 cobordisms between connected surfaces with one boundary component.
 In the general case, Kerler \cite{K98} defined a
 \emph{half-projective} TQFT which has weaker functoriality and
 monoidality properties.  According to Kerler, the relation with the
 Reshetikhin-Turaev TQFT could pass through homological TQFT (see
 \cite{K03,FN92}).

 More recently, motivated by the volume conjecture, Jun Murakami combined
 the Hennings invariant associated with quantum $\slt$ with the techniques
 of renormalized Reshetikhin-Turaev invariants to define a
 \emph{generalized Kashaev invariant} of links in a 3-manifold \cite{M17}. 
 These ideas were further generalized by Beliakova, Blanchet and the second author in
 \cite{BBG17a}.

 This paper is organized into two main sections: 
 \begin{enumerate}
  \item In the first part we generalize the logarithmic Hennings invariant of closed 3-manifolds given in 
   \cite{BBG17a} to the setting of finite-dimensional non-degenerate unimodular ribbon Hopf algebras;
  \item In the second part we extend the previous 3-manifold invariant
   to a full $2+1$-TQFT satisfying all
   functoriality and monoidality properties in the case of finite-dimensional factorizable ribbon Hopf algebras. 
 \end{enumerate}
 To explain this work more precisely we will recall some past results.

 Our construction relies on the \emph{modified trace} defined in
 \cite{GKP11, GKP13, GPV13}.  In non-semisimple representation theory
 the categorical trace can vanish for many modules. The modified
 trace is designed to replace the categorical trace in such
 situations.  It is used to define Reshetikhin-Turaev-style 3-manifold invariants and
 TQFTs from the representation theory of the unrolled
 quantum group of $\slt$, see \cite{CGP14, BCGP16, D17}. These TQFTs are based on the
 universal construction of Blanchet, Habegger, Masbaum and Vogel given
 in \cite{BHMV95}.

 For the restricted version of quantum $\slt$, Beliakova, Blanchet and the second author construct a family
 of invariants of 3-manifolds endowed with bichrome colored links by combining
 Hennings' approach with the modified trace methods (see \cite{BBG17a}). 
 As we will now explain, our first major result is to give a
 further generalization and a reformulation of this construction.

 We start with a finite-dimensional non-degenerate unimodular ribbon Hopf algebra $H$.  
 Hopf algebras have been studied extensively, see
 for example the books \cite{R11, S69a, M93}.  Of particular
 interest for us are the references \cite{CW08, D89, LN15, M90, LO16, L90, N17, R75, R93, RS88, R88, R98, S69b}.
 These papers contain many examples of the kinds of Hopf algebras we
 are considering.

 If $\calC$ denotes the category of finite-dimensional left $H$-modules, then the 
 associated Reshetikhin-Turaev functor $F_\calC$ maps $\calC$-colored ribbon graphs to $H$-module morphisms in
 $\calC$. We think of edges of such graphs as being 
 ``blue''.  In Subsection \ref{string_link_graphs}, we define an
 extension $F_\lambda$ of $F_\calC$ to bichrome graphs where we also
 allow ``red'' edges.  These red edges are colored with the regular
 representation of $H$ and have to be evaluated with a special element
 $\lambda \in H^*$ called the right integral, as in the Hennings invariant.
 We call $F_\lambda$ the Hennings-Reshetikhin-Turaev functor.
 A closed $\calC$-colored bichrome graph is {\em admissible} if it features at least one
 blue edge whose color is a projective module.  By renormalizing with the
 modified trace, we also define a renormalized invariant
 $F'_{\lambda}$ of admissible closed $\calC$-colored bichrome graphs in $S^3$.

 We use $F'_{\lambda}$ to define an invariant of
 closed 3-manifolds $M$ containing admissible closed $\calC$-colored bichrome graphs $T$.
 If $L \subset S^3$ is a surgery presentation for $M$
 then we can think of its $\ell$ components as being red and colored with the regular representation $H$, 
 so that $L \cup T$ becomes a $\calC$-colored bichrome graph in $S^3$. Then we define
 \[
  \Hm(M,T) =\calD^{-1 - \ell} \delta^{- \sigma(L)} F'_{\lambda}(L \cup T)
 \]
 where the first two factors are scalars depending only on the linking
 matrix of $L$, see Subsection \ref{3-manifold_invariants}.
 Properties of the Hennings-Reshetikhin-Turaev functor, of the modified trace and of the integral imply that
 this assignment is an isotopy invariant of $L \cup T$.  Moreover, the
 integral and the normalization factor assure it is independent of the choice of the surgery link
 $L$.  Thus, $\Hm$ is a well-defined invariant of the pair $(M,T)$ which can be thought of as a combination of Hennings'
 algebraic invariant with Reshetikhin and Turaev's categorical invariant
 which is renormalized by the modified trace.

 As we mentioned before, the invariant $\Hm$ is a generalization of the work of \cite{BBG17a} to the setting of finite-dimensional 
 non-degenerate unimodular ribbon Hopf algebras. More precisely, in Subsection \ref{SS:Log_Hennings} we show the following:
 when $H$ is (a quasi-triangular extension of) the restricted version of quantum $\slt$, 
 if the bichrome graph $T$ has all of its blue edges
 colored with the regular representation $H$, and if it only has $(1,1)$-coupons, then the invariant $\Hm$ recovers the
 \emph{logarithmic Hennings invariant} of \cite{BBG17a}.
 In addition, if the bichrome graph is a blue knot
 colored by the Steinberg-Kashaev representation of $\sl_2$, the invariant $\Hm$
 recovers Jun Murakami's \emph{generalized Kashaev invariant} \cite{M17} of links in 3-manifolds, see
 Subsection \ref{SS:Gen_Kashaev}.  
 Finally, our invariant contains via connected sums the Hennings invariant  
 $\HKR$ associated with $H$: 
 if $M$ and $M'$ are closed connected 3-manifolds and $T'$ is an admissible closed $\calC$-colored bichrome graph inside
 $M'$ then
 \[
  \Hm(M \# M',T) = \HKR(M) \Hm(M',T').
 \]

 As explained above, when $H$ is not semisimple the Hennings invariant
 can not be extended to a TQFT (in Atiyah's strict definition).  However, the admissibility requirement
 on $T$ and the modified trace allow us to obtain non-trivial vectors
 which are zero for the Hennings-Kerler-Lyubashenko TQFT.  
 As we will now explain, this is the main tool we use to produce a fully monoidal functor.

 In \cite{BHMV95} Blanchet, Habegger, Masbaum and Vogel provide a
 universal TQFT construction which is completely determined, once a quantum invariant of closed 3-manifolds has been fixed, 
 by the choice of the source cobordism category.  
 In \cite{BCGP16, D17} this procedure is applied to a setting of quantum invariants arising from generically semisimple categories. 
 Suitable restrictions on the cobordism category allow for the integration of the 
 modified trace in the process. Here we show that the universal construction can also be combined with the use of the integral 
 to construct TQFTs from Hopf algebras with no semisimplicity requirements.

 We consider a category of decorated cobordisms satisfying a certain admissibility condition, see Subsection
 \ref{admissible_cobordisms}. Loosely speaking, a decorated cobordism is obtained by
 generically cutting a 3-manifold containing an admissible graph along surfaces. Therefore 
 objects in our category, denoted $\bbSigma$, are surfaces decorated with marked points corresponding to where the graph is cut.    
 We use the admissibility requirements to dissymmetrize the category of cobordisms: there are more restrictions for cobordisms
 with empty incoming boundary than for cobordisms with empty outgoing boundary. In other words, there are less morphisms
 of the form $\bbM : \varnothing \rightarrow \bbSigma$ than morphisms of the form $\bbM' : \bbSigma \rightarrow \varnothing$.
 This breaks Atiyah's involutory axiom 
 asking that the TQFT space of $\overline{\bbSigma}$ be dual to the TQFT space of $\bbSigma$.
 The closed 3-manifold invariant $\Hm$ induces a bilinear pairing
 \[
  \langle \cdot,\cdot \rangle_{\bbSigma} : \calV'(\bbSigma) \times \calV(\bbSigma) \rightarrow \Bbbk
 \]
 on the vector spaces
 \[
  \calV(\bbSigma) = \Span_{\Bbbk} \{ \bbM : \varnothing \rightarrow \bbSigma \}, \quad
  \calV'(\bbSigma) = \Span_{\Bbbk} \{ \bbM' : \bbSigma \rightarrow \varnothing \}.
 \]
 The universal construction defines the vector spaces
 $\rmV_{\calC}(\bbSigma)$ and $\rmV'_{\calC}(\bbSigma)$ as the quotients
 of $\calV(\bbSigma)$ and $\calV'(\bbSigma)$ with respect to the right and left radicals of $\langle \cdot,\cdot \rangle_{\bbSigma}$
 respectively. We prove that properties of $\Hm$ make
 $\rmV_{\calC}$ and $\rmV'_{\calC}$ into symmetric monoidal functors, and hence TQFTs.

 Usually the vector spaces produced by the universal construction are
 not easy to determine. This is not the case in our situation.  In
 fact they are isomorphic to the images of Lyubashenko's modular functor
 for $H$ (see \cite{L95b}). In particular, we denote with $\X$ the Hopf algebra $H$ equipped with the left $H$-module structure whose
 dual is the coadjoint representation ($X^*$ is the coend for the functor mapping every pair $(V,V')$ 
 of left $H$-modules to $V^* \otimes V'$).
 Then the space assigned to a genus $g$ surface with no
 marked points is isomorphic to the space of $H$-invariant vectors in $\X^{\otimes g}$.
 More generally, the TQFT vector space of an object $\bbSigma$ given by a genus $g$ surface equipped with $k$ marked points
 labeled by finite-dimensional $H$-modules $V_1,\ldots,V_k$ is isomorphic to
 $\Hom_\calC(V_1 \otimes \cdots \otimes V_k,\X^{\otimes g})$.
 Indeed, in Subsection \ref{S:identification_TQFT_spaces} we give a bilinear pairing 
 \[
  \langle \cdot,\cdot \rangle_{\calX} : 
  \calX'_{g,\underline{V}} \times \tilde{\calX}_{g,\underline{V}} \rightarrow \Bbbk
 \]
 on the algebraic spaces
 \[
  \tilde{\calX}_{g,\underline{V}} = \Hom_\calC(H,\X^{\otimes g} \otimes \underline{V}), \quad 
  \calX'_{g,\underline{V}} = \Hom_\calC((\X^*)^{\otimes g} \otimes \underline{V},\one)
 \]
 where $\underline{V} = V_1 \otimes \ldots \otimes V_k$.
 Then the TQFT vector spaces $\rmV_{\calC}(\bbSigma)$ and $\rmV'_{\calC}(\bbSigma)$ are isomorphic to the quotients
 of $\tilde{\calX}_{g,\underline{V}}$ and $\calX'_{g,\underline{V}}$ with respect to the right and left radicals of 
 $\langle \cdot,\cdot \rangle_{\calX}$ respectively, see Corollary \ref{C:identification}. 
 Since this pairing has trivial left radical we have isomorphisms
 \[
  \calX'_{g,\underline{V}} \cong \rmV'_{\calC}(\bbSigma) \cong \rmV_{\calC}(\bbSigma)^*.
 \]
 If one of the modules $V_i$ is projective then $\bbSigma$ is dualizable 
 and there is a Verlinde formula (see Remark \ref{R:Verlinde}) expressing the dimension of Lyubashenko's spaces:
 \[
  \dim_{\Bbbk} \left( \Hom_\calC((\X^*)^{\otimes g} \otimes \underline{V},\one) \right) = \Hm(\bbS^1 \times \bbSigma).
 \]
 
 With this in mind, we ask if there are applications of our work in the area of two-dimensional conformal quantum field theory (CFT). Modular (semisimple) tensor categories and their associated quantum invariants have been very useful tools in studying rational CFTs. As explained in \cite{FS10}, the analysis of non-rational CFTs is less understood. However, recently there has been several results proven in this area, see for example \cite{BGT11, BFGT09, CRW14, FGST06, FHST04, FS10, FS17, FSS13, FSS14, GSTF06, GR06}. We can ask:  does the non-semisimple TQFT of this paper play a similar role in non-rational CFTs as does the modular TQFT in rational CFTs?

 Finally, the action of a Dehn twist along a curve $\gamma \subset \Sigma$ on $\rmV'_{\calC}(\bbSigma)$ is given by the insertion of a red surgery curve parallel to $\gamma$ with appropriate framing inside a cobordism from $\Sigma$ to $\varnothing$, see Remark \ref{R:Dehn_twist}. These encircling red curves look similar to the operators introduced by Lyubashenko to define its modular functor. This leads us to conjecture that the modular functor induced by $\rmV'_{\calC}$ is equivalent to Lyubashenko's modular functor.

\section{3-Manifold invariants from unimodular ribbon Hopf algebras}\label{S:3-manifold_invariants}

In this section we generalize the logarithmic Hennings 3-manifold invariants of \cite{BBG17a} 
via a construction which applies to every finite-dimensional non-degenerate unimodular ribbon Hopf algebra.

\subsection{Modified traces and stabilized categories of left modules}\label{S:modified_traces}

We start by recalling classical results on ribbon Hopf algebras.
Standard references for the theory are provided by \cite{S69a} and \cite{R11}.

Let us fix for this section a finite-dimensional non-degenerate unimodular ribbon Hopf algebra $H$ over a field $\Bbbk$.
%of characteristic zero. 
We briefly recall some of the definitions involved. 
The Hopf algebra $H$ is a finite-dimensional vector space endowed with a multiplication $m : H \otimes H \rightarrow H$, 
a unit $\eta : \Bbbk \rightarrow H$, a coproduct 
$\Delta : H \rightarrow H \otimes H$, a counit $\epsilon : H \rightarrow \Bbbk$,
an antipode $S : H \rightarrow H$, an R-matrix $R = \sum_{i=1}^r a_i \otimes b_i \in H \otimes H$ and a ribbon element $v \in \rmZ(H)$.
We denote with $u$ the Drinfeld element $\sum_{i=1}^r S(b_i)a_i \in H$ and with $\piv$ the pivotal
element $uv^{-1} \in H$.
For the coproduct we will use Sweedler's notation $\Delta^{n-1}(x) = x_{(1)} \otimes \ldots \otimes x_{(n)}$
which hides the summation. As a consequence of the finite-dimensionality of $H$, the antipode $S$ is invertible.

A right integral of $H$ is a linear form $\lambda \in H^*$ satisfying $\lambda f = f(1_H) \cdot \lambda$
for every $f \in H^*$. 
This means that $(\lambda f)(x)=(\lambda \otimes f)(\Delta(x))=f(1_H)\lambda(x)$ for every $f\in H^*$ and every $x \in H$,
or equivalently that $\lambda(x_{(1)}) \cdot x_{(2)}=\lambda(x) \cdot 1_H$ for every $x \in H$.  
A left cointegral of $H$ is a vector $\Lambda \in H$ satisfying 
$x \Lambda = \epsilon(x) \Lambda$
for every $x \in H$. Since $H$ is finite-dimensional,
right integrals form a 1-dimensional ideal in $H^*$ and
left cointegrals form a 1-dimensional ideal in $H$. Moreover every non-zero right integral $\lambda \in H^*$ and every non-zero 
left cointegral $\Lambda \in H$ satisfy $\lambda(\Lambda) \neq 0$. We fix therefore for the rest of the paper
a choice of a right integral $\lambda \in H^*$ and of a left cointegral $\Lambda \in H$ satisfying $\lambda(\Lambda) = 1$.  
The Hopf algebra $H$ being \textit{unimodular} means that $S(\Lambda) = \Lambda$. 
Unimodularity of $H$ implies $\lambda$ is a \textit{quantum character}: $\lambda(xy)=\lambda(S^2(y)x)$ for all $x,y \in H$.  
Finally, the fact that $H$ is \textit{non-degenerate} means that $\Delta_+ \Delta_- \neq 0$ where
\[
 \Delta_+ := \lambda(v^{-1}), \quad \Delta_- := \lambda(v).
\]

Let $\calC$ be the ribbon linear category $H$-$\mod$ of
finite-dimensional left $H$-modules.   For $V$ an object of $\calC$, we
denote by $\rho_V:H\to\End_\kk(V)$ the associated representation.  The
unit of $\calC$ is $\one=\kk$ with $\rho_{\one}(h)=\epsilon(h) \cdot \id_{\kk}$ for every $h \in H$, and the dual of
$V$ is $V^*=\Hom_\kk(V,\kk)$ with $\rho_{V^*}(h)= \left( \rho_V(S(h)) \right)^*$ for every $h \in H$. 
The duality structural morphisms of $\calC$ are denoted
\begin{gather*}
 \lev_V : V^* \otimes V \to \one, \quad \lcoev_V : \one \to V \otimes V^*  \\
 \rev_V : V \otimes V^* \to \one, \quad \rcoev_V : \one \to V^* \otimes V
\end{gather*}
where the first two maps are given by the duality in $\Vect_{\Bbbk}$, while the second two 
are twisted with the action of the pivotal element $g$.

We will denote with $H$ the \textit{regular representation of $H$}, which is the left $H$-module structure on $H$ itself determined by the action $L : H \to \End_\kk(H)$ given by 
$L_h(x) = hx$ for all $h,x \in H$. 
Both $H$ and its dual left module $H^*$ are free rank one modules
generated by $1_H$ and $\lambda$ respectively. They are thus
isomorphic via the \textit{Radford map}
\[
 \begin{array}{rccc}
  \varphi : & H & \rightarrow & H^* \\
  & x & \mapsto & L_{S(x)}^*(\lambda).
 \end{array}
\]
It is well known that $\varphi(\Lambda) = \epsilon$ and that $\varphi^{-1}(f) = f(\Lambda_{(1)}) \cdot \Lambda_{(2)}$ 
for every $f \in H^*$, see \cite[Section 10.2]{R11}.

We will now recall the theory of modified traces.
The \textit{right partial trace} of an endomorphism $f \in \End_{\calC}(V \otimes V')$ 
is the endomorphism $\ptr(f) \in \End_{\calC}(V)$ given by
\[
 \ptr(f) := (\id_V \otimes \rev_{V'}) \circ (f \otimes \id_{V'^*}) \circ (\id_V \otimes \lcoev_{V'}).
\]

A \textit{modified trace} $\rmt$ on the ideal of projective modules $\Proj(\calC)$ is a family 
\[
 \rmt := \{ \rmt_V : \End_{\calC}(V) \rightarrow \C \mid V \in \Proj(\calC) \}
\]
of linear maps satisfying:
\begin{enumerate}
 \item $\rmt_{V}(f' \circ f) = \rmt_{V'}(f \circ f')$ 
  for all objects $V,V'$ of $\Proj(\calC)$ and
  all morphisms $f \in \Hom_{\calC}(V,V')$ and $f' \in \Hom_{\calC}(V',V)$,
 \item $\rmt_{V \otimes V'} (f)= \rmt_V(\ptr(f))$ for all objects
  $V$ of $\Proj(\calC)$ and $V'$ of $\calC$ and every morphism
  $f \in \End_{\calC}(V \otimes V')$.
\end{enumerate}

Since $\calC$ is ribbon then it follows that a modified trace also satisfies an analogous left
version of second condition of the definition, see \cite{GKP11,GPV13}.    
The existence of a modified trace was known under some conditions on $H$
(see \cite{GR17,GKP13}).  The following result is
a weaker version, for ribbon categories, of Theorem 1 in \cite{BBG17b} by Beliakova, Blanchet and Gainutdinov.

\begin{theorem}\label{T:BBGa}
 There exists a unique modified trace $\rmt$ on $\Proj(\calC)$ satisfying
 \[
  \rmt_H(f) = \lambda(\piv f(1_H))
 \]
 for every endomorphism $f$ of the projective left $H$-module $H$,
 where $\piv$ is the pivotal element of $H$. Furthermore, $\rmt$ is
 non-degenerate.\footnote{There might exist modified traces on more
   general ideals of $\calC$, but the non-degeneracy of $\rmt$ is
   related to the fact that $\Proj(\calC)$ is the smallest non-zero
   ideal of $\calC$.}
\end{theorem}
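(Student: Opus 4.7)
The strategy I would follow rests on the classical fact that $H$ is a projective generator of $\Proj(\calC)$: every finite-dimensional projective $H$-module is a direct summand of some $H^{\oplus n}$. This reduces the whole theorem to a statement about $\End_\calC(H)$, and via the natural identification $\End_\calC(H) \cong H^{\op}$, $f \mapsto f(1_H)$ (every left $H$-linear endomorphism of $H$ is right multiplication by some element), to a statement about a linear form on $H$.

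\textbf{Uniqueness.} For $P \in \Proj(\calC)$ choose $\iota : P \hookrightarrow H^{\oplus n}$ and $p : H^{\oplus n} \twoheadrightarrow P$ with $p \circ \iota = \id_P$. Cyclicity of any candidate $\rmt$ forces
\[
 \rmt_P(f) = \rmt_{H^{\oplus n}}(\iota \circ f \circ p),
\]
and the partial trace property applied to $H^{\oplus n} = H \otimes \Bbbk^n$ (with trivial action on $\Bbbk^n$) forces $\rmt_{H^{\oplus n}}$ to be computed coordinate-wise from $\rmt_H$. Hence a modified trace is completely determined by its restriction to $\End_\calC(H)$, and the normalization in the statement pins this down uniquely.

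\textbf{Existence on $H$.} Define $\rmt_H(f) := \lambda(\piv f(1_H))$. Under the identification $\End_\calC(H) \cong H^\op$ with $f_a(x) = xa$, one has $f_a \circ f_b = f_{ba}$, so cyclicity amounts to $\lambda(\piv\, ab) = \lambda(\piv\, ba)$. This I would verify by combining two ingredients: unimodularity of $H$, which by the cited classical result makes $\lambda$ a quantum character ($\lambda(xy) = \lambda(S^2(y)x)$), and the standard ribbon identity $S^2(x) = \piv\, x\, \piv^{-1}$. Indeed,
\[
 \lambda(\piv a\, b) \;=\; \lambda\bigl(S^2(b)\, \piv a\bigr) \;=\; \lambda\bigl(\piv b\, \piv^{-1}\, \piv a\bigr) \;=\; \lambda(\piv\, ba).
\]

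\textbf{Extension and partial trace property.} The key technical input is the well-known left $H$-module isomorphism $\alpha_V : H \otimes V \xrightarrow{\sim} H \otimes V^{\mathrm{triv}}$, $h \otimes v \mapsto h_{(1)} \otimes S(h_{(2)})v$, which shows $H \otimes V$ is free of rank $\dim V$ with the diagonal action. I would use $\alpha_V$ to identify $\End_\calC(H \otimes V)$ with $H \otimes \End_\Bbbk(V)$, then define $\rmt_{H \otimes V}$ by applying $\rmt_H$ coordinate-wise (that is, tensor with the categorical trace on $V$). The partial trace property $\rmt_{H \otimes V}(f) = \rmt_H(\ptr(f))$ must then be checked: this reduces to showing that a direct computation of the partial trace through $\alpha_V$ agrees with the naive one, and here the interplay between the coproduct of $H$ (entering $\ptr$), the antipode, and the action of $\piv$ in the twisted right duality does the work. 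Cyclicity on general projectives follows because cyclicity on $H^{\oplus n}$ is bilinear in block entries, and $\rmt_P(f_1 \circ f_2) = \rmt_{P'}(f_2 \circ f_1)$ for $f_1 : P \to P'$, $f_2 : P' \to P$ is obtained by transporting through the inclusions and projections and reducing to cyclicity on $H^{\oplus n}$.

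\textbf{Non-degeneracy and main obstacle.} The Radford map $\varphi : H \xrightarrow{\sim} H^*$ provides the non-degeneracy of the bilinear form $(x,y) \mapsto \lambda(\piv xy)$ on $H$: if $\lambda(\piv\, xy) = 0$ for all $y$, then $x = 0$ (since $\piv$ is invertible and $\lambda$ induces an isomorphism). On an arbitrary projective $P$, non-degeneracy of the pairing $\Hom_\calC(P,H^{\oplus n}) \times \Hom_\calC(H^{\oplus n},P) \to \Bbbk$ obtained from $\rmt$ follows by tracking through the projector $p \circ \iota$. The main obstacle, in my view, is the partial-trace compatibility in the extension step: one must verify the formula on $H \otimes V$ in a way that is manifestly independent of the choice of $\alpha_V$, and this is where one pays for working with a general (non-factorizable) non-degenerate unimodular ribbon Hopf algebra rather than relying on the representation-theoretic shortcuts available in specific examples.
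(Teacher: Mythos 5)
First, a point of reference: the paper does not actually prove this theorem --- it is quoted as ``a weaker version, for ribbon categories, of Theorem 1 in [BBG17b]'', so the comparison has to be with that reference. Your outline does follow essentially the same architecture as the argument there: reduce everything to the regular representation using that every projective is a summand of $H^{\oplus n}\cong H\otimes\one^{\oplus n}$, check cyclicity on $\End_{\calC}(H)\cong H^{\op}$ via the quantum-character property $\lambda(xy)=\lambda(S^2(y)x)$ together with $S^2(x)=\piv x\piv^{-1}$, use the freeness isomorphism $H\otimes V\cong H\otimes V^{\mathrm{triv}}$, and get non-degeneracy from the non-degeneracy of $\lambda$ (invertibility of $\piv$ plus the Radford map). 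The uniqueness argument and the symmetry computation $\lambda(\piv ab)=\lambda(S^2(b)\piv a)=\lambda(\piv ba)$ are correct as written.

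The genuine gap is exactly the step you flag and then wave through: the verification that the extension satisfies the right partial trace property, i.e.\ that $\rmt_{H\otimes V}(f)=\rmt_H(\ptr(f))$ for every $f\in\End_{\calC}(H\otimes V)$ (and then for arbitrary projectives in place of $H$). This is not a routine compatibility check --- it is the entire mathematical content of the theorem, and it is precisely where the defining property of the right integral, $\lambda(x_{(1)})\,x_{(2)}=\lambda(x)\cdot 1_H$, must enter; symmetry of the form $(a,b)\mapsto\lambda(\piv ab)$ and the Hopf-algebra axioms alone do not suffice (indeed the content of [BBG17b] is that the symmetric forms $\lambda(\piv\,\cdot)$ yielding modified traces are exactly the symmetrised \emph{integrals}). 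Your sketch never identifies this input, and the ambiguity in your own description betrays that the computation has not been engaged: after trivialising via $\alpha_V$ you propose to ``tensor with the categorical trace on $V$'', but the categorical trace of the original module $V$ (which carries the action of $\piv$) and the ordinary trace of $V^{\mathrm{triv}}$ are different functionals, and only one choice can be consistent with $\ptr$, whose right evaluation $\rev_V$ is twisted by $\piv$. Until that identity is actually computed --- tracking how the coproduct appearing in $\alpha_V^{\pm1}$, the antipode, the pivotal twist in $\rev_V$, and the integral property of $\lambda$ combine --- the existence half of the theorem is asserted rather than proved; the remaining points (well-definedness on summands, cyclicity between distinct projectives, non-degeneracy) are then standard, as you indicate.
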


We fix from now on the choice of this modified trace. Remark that we get
$\rmt_H(\Lambda \circ \epsilon) = \epsilon(g) \lambda(\Lambda) = 1$ where 
$\Lambda : \Bbbk \to H$ denotes the unique morphism determined by $\Lambda(1) = \Lambda$.

We denote with $[n]\calC$ the \textit{$n$-th stabilized subcategory of $\calC$}, which is the linear
subcategory of $\calC$ whose objects are of the form
$[n]V := H^{\otimes n} \otimes V$ for some object $V$ of $\calC$ and
whose morphisms from $[n]V$ to $[n]V'$ are linear combinations of the form
$\sum_{i=1}^m L_{\underline{x_i}} \otimes f_i$ for some elements
$\underline{x_1}, \ldots, \underline{x_m} \in H^{\otimes n}$ and for some linear maps
$f_1, \ldots, f_m : V \rightarrow V'$. Remark that if $\sum_{i=1}^m L_{\underline{x_i}} \otimes f_i$ 
is a morphism of $[n]\calC$ then neither $L_{\underline{x_i}}$, nor $f_i$, nor their tensor product 
is required to be $H$-invariant for any $i \in \{1,\ldots,m\}$, but the linear combination 
$\sum_{i=1}^m L_{\underline{x_i}} \otimes f_i$ is.

If $V$ and $V'$ are objects of $\calC$ and if $n > 0$ then for every morphism
$\sum_{i=1}^m L_{\underline{x_i}} \otimes f_i$ in $\Hom_{[n]\calC}([n]V,[n]V')$ let us define
\[
\int_{\calC} \left( \sum_{i=1}^m L_{\underline{x_i}} \otimes f_i \right) := \sum_{i=1}^m (\lambda
\otimes \id_{[n-1]V'}) \circ ( L_{\underline{x_i}} \otimes f_i) \circ (\eta \otimes \id_{[n-1]V})
\]
where $\eta : \Bbbk \rightarrow H$ is the unit of $H$.

\begin{lemma}
 The above assignment defines a linear map 
 \[
  \int_{\calC} : \Hom_{[n]\calC}([n]V,[n]V') \rightarrow \Hom_{[n-1]\calC}([n-1]V,[n-1]V').
 \]
\end{lemma}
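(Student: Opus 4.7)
The plan is to verify three things: that $\int_\calC$ is well-defined on morphisms, that it is linear, and that its image lies in $\Hom_{[n-1]\calC}([n-1]V,[n-1]V')$. For well-definedness and linearity, I would first observe that the natural assignment $H^{\otimes n}\otimes\Hom_\Bbbk(V,V') \to \Hom_\Bbbk([n]V,[n]V')$ sending $\underline{x}\otimes f$ to $L_{\underline{x}}\otimes f$ is injective: this follows from the faithfulness of the left regular representation $L: H \to \End_\Bbbk(H)$ (since $L_h(1_H)=h$), extended through tensor products. Consequently, every morphism of $[n]\calC$ corresponds to a unique element $\sum_i \underline{x_i}\otimes f_i$ of $H^{\otimes n}\otimes\Hom_\Bbbk(V,V')$; the formula for $\int_\calC$ depends only on this element, and linearity is immediate.

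For the image to lie in $\Hom_{[n-1]\calC}$, I would write $\underline{x_i} = x_i^1\otimes\underline{x_i}'$ with $x_i^1 \in H$ and $\underline{x_i}' \in H^{\otimes(n-1)}$, and set $\psi_i := L_{\underline{x_i}'}\otimes f_i$. A direct calculation gives $\int_\calC\phi = \sum_i \lambda(x_i^1)\,L_{\underline{x_i}'}\otimes f_i$, which is manifestly of the required form for a morphism of $[n-1]\calC$. The remaining content is to check $H$-equivariance. From the equivariance of $\phi = \sum_i L_{x_i^1}\otimes\psi_i$, the identity
\[
\sum_i x_i^1 h_{(1)}\otimes\psi_i(h_{(2)}\cdot w) \;=\; \sum_i h_{(1)} x_i^1\otimes h_{(2)}\cdot\psi_i(w)
\]
in $H\otimes[n-1]V'$ holds for all $h\in H$ and $w\in[n-1]V$. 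I would then apply $\lambda\otimes\id$ to this identity and manipulate using the right integral property $\lambda(h_{(1)})h_{(2)} = \lambda(h)\cdot 1_H$ together with the quantum character identity $\lambda(xy) = \lambda(S^2(y)x)$ (valid because $H$ is unimodular) to conclude that $\tilde\phi := \int_\calC\phi$ satisfies $\tilde\phi(h\cdot w) = h\cdot\tilde\phi(w)$.

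The hard part will be this final manipulation: the equivariance of $\phi$ involves the split Sweedler factors $h_{(1)}, h_{(2)}$, whereas the sought equivariance of $\tilde\phi$ requires an unsplit $h$. A plausible route is to pre-multiply the equivariance identity on the left in the $H$-factor by the cointegral $\Lambda$ and exploit the unimodular identity $\Lambda h = \epsilon(h)\Lambda$ to collapse one coproduct, then combine with the normalization $\lambda(\Lambda) = 1$ to transfer the $h$-action from inside $\psi_i$ to outside $\lambda$. A more conceptual alternative is to recognize $\int_\calC$ as a modified partial trace over the regular representation $H$, with ``coevaluation'' $\eta$ and ``evaluation'' $\lambda$ forming a Frobenius-type pairing, and to invoke the general categorical principle that partial traces of equivariant morphisms are equivariant.
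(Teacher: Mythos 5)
Your reduction and setup do match the paper's: writing a morphism as $\sum_i L_{x_i^1}\otimes\psi_i$ with $\psi_i = L_{\underline{x_i}'}\otimes f_i$, observing that $\int_\calC$ lands in the correct linear-combination form, and reducing everything to the identity $\sum_i\lambda(x_i^1)\,\psi_i(h\cdot w)=\sum_i\lambda(x_i^1)\,h\cdot\psi_i(w)$. (Well-definedness is even easier than your injectivity argument: the defining formula is $(\lambda\otimes\id)\circ\phi\circ(\eta\otimes\id)$ evaluated on the composite linear map $\phi$, so it visibly depends only on $\phi$ and is linear.) The genuine gap is exactly the step you flag as ``the hard part'', and neither of your proposed routes closes it. Pre-multiplying the first tensor leg of the equivariance identity by $\Lambda$ and using $\Lambda y=\epsilon(y)\Lambda$ collapses that whole leg to counits: you obtain $\sum_i\epsilon(x_i^1)\bigl(\psi_i(h\cdot w)-h\cdot\psi_i(w)\bigr)=0$, in which the weights $\lambda(x_i^1)$ have been irretrievably replaced by $\epsilon(x_i^1)$, and the normalization $\lambda(\Lambda)=1$ never enters because no $\lambda$ ever appears. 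The categorical alternative also fails as stated: $\eta:\Bbbk\to H$ and $\lambda:H\to\Bbbk$ are not morphisms of $\calC$ (equivariance of $\lambda$ for the trivial target would force $\lambda$ to be proportional to $\epsilon$), so there is no ``partial trace of an equivariant morphism along a duality pairing in $\calC$'' to invoke.

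What actually works, and is the paper's proof, is an antipode trick using only the counit/antipode identities and the quantum character property of $\lambda$: starting from $\sum_i\lambda(x_i)\,f_i\circ\rho_W(h)$, insert $\epsilon(h_{(1)})$, replace $\epsilon(h_{(1)})\cdot 1_H$ by $h_{(2)}S^{-1}(h_{(1)})$ inside $\lambda$, use $\lambda(xy)=\lambda(S^2(y)x)$ to rewrite $\lambda(x_i h_{(2)}S^{-1}(h_{(1)}))=\lambda(S(h_{(1)})x_ih_{(2)})$, then apply the $H$-invariance of $\sum_i L_{x_i}\otimes f_i$ to trade $\lambda(S(h_{(1)})x_ih_{(2)})\,f_i\circ\rho_W(h_{(3)})$ for $\lambda(S(h_{(1)})h_{(2)}x_i)\,\rho_{W'}(h_{(3)})\circ f_i$, and finally collapse $S(h_{(1)})h_{(2)}=\epsilon(h_{(1)})\cdot 1_H$. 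Note that the right-integral identity $\lambda(h_{(1)})h_{(2)}=\lambda(h)\cdot 1_H$, which you planned to use, is not needed anywhere; the quantum character property (a consequence of unimodularity), which you mention only in passing, is the decisive ingredient, and it must be deployed after artificially creating the factor $h_{(2)}S^{-1}(h_{(1)})$ rather than on the identity obtained by directly applying $\lambda\otimes\id$.
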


\begin{proof} 
 Every morphism in $\Hom_{[n]\calC}([n]V,[n]V')$ can be written as $\sum_{i = 1}^m L_{x_i} \otimes f_i$ for some $x_1, \ldots, x_m \in H$ and some linear maps $f_1, \ldots, f_m : [n-1]V \rightarrow [n-1]V'$, and we need to show that $\int_\calC (\sum_{i = 1}^m L_{x_i} \otimes f_i)$ is $H$-invariant. To do this we will use the properties of the Hopf algebra $H$: for every $h \in H$ we have 
 \[
  \epsilon (h_{(1)}) \cdot h_{(2)} =  h, \quad S(h_{(1)}) h_{(2)} = \epsilon(h) \cdot 1_H, \quad 
  h_{(2)}S^{-1}(h_{(1)}) = \epsilon(h) \cdot 1_H.
 \]

 Now for every $h \in H$ we have
 \begin{align*}
  \int_{\calC} \left(\sum_{i = 1}^m L_{x_i} \otimes f_i\right) \circ \rho_{W}(h) 
  &= \sum_{i = 1}^m \lambda(x_i) \cdot f_i \circ \rho_{W}(h) \\
  &= \sum_{i = 1}^m \epsilon(h_{(1)})\lambda(x_i) \cdot f_i \circ \rho_{W}(h_{(2)}) \\
  &= \sum_{i = 1}^m \lambda \left( x_i h_{(2)}S^{-1}(h_{(1)}) \right) \cdot f_i \circ \rho_{W}(h_{(3)}) \\
  &= \sum_{i = 1}^m \lambda(S(h_{(1)})x_i h_{(2)}) \cdot f_i \circ \rho_{W}(h_{(3)})  \\
  &= \sum_{i = 1}^m \lambda(S(h_{(1)})h_{(2)}x_i) \cdot \rho_{W'}(h_{((3)}) \circ f_i \\
  &= \sum_{i = 1}^m \epsilon(h_{(1)}) \lambda(x_i) \cdot \rho_{W'}(h_{(2)}) \circ f_i \\
  &= \sum_{i = 1}^m \lambda(x_i) \cdot \rho_{W'}(h) \circ f_i \\
  &= \rho_{W'}(h) \circ \int_{\calC} \left(\sum_{i = 1}^m L_{x_i} \otimes f_i\right),
 \end{align*}
 where the fourth equality follows from the fact that $\lambda$ is a quantum character
 and the fifth equality results from $\sum_{i = 1}^m L_{x_i} \otimes f_i$ being $H$-invarant.
 \end{proof}

\subsection{Hennings-Reshetikhin-Turaev functor for string link graphs}\label{string_link_graphs}

In this subsection we construct a family of functors defined on certain categories of $\calC$-colored ribbon graphs featuring 
red and blue edges. Red edges are related to the Hennings invariant: they are colored with the regular representation of $H$ and when
they form closed components they are evaluated using the right integral $\lambda$. 
Blue edges are somewhat more standard: they can be colored with any representation of $H$ and they are evaluated using the 
Reshetikhin-Turaev functor $F_{\calC}$.
The standard reference for ribbon graphs, ribbon categories and their associated Reshetikhin-Turaev functors is
\cite{T94}.

By a \emph{closed manifold} we mean a compact manifold without boundary. Every manifold we will consider in this paper will be oriented, every diffeomorphism of manifolds will be positive, and every link will be oriented and framed.  
If $Y$ is a manifold then we denote with $\overline{Y}$ the 
manifold obtained from $Y$ by reversing its orientation. The interval $[0,1]$ will always be denoted $I$.

An \textit{$n$-string link} is an $(n,n)$-tangle whose $i$-th incoming
boundary vertex is connected to the $i$-th outgoing boundary vertex by
an edge directed from bottom to top for every $1 \leq i \leq n$.

A \textit{bichrome graph} is a ribbon graph with edges divided into two groups, red and blue, satisfying the following condition:
for every coupon there exists a number $k \geq 0$ such that the first $k$ input legs and
the first $k$ output legs are red with positive orientation, meaning incoming and outgoing respectively, while all the other ones are blue. Red edges will be represented graphically by dashed-dotted lines. This will allow the reader to distinguish them from blue edges also in black and white versions of the paper.

The \textit{smoothing} of a bichrome graph is the red tangle obtained by throwing away every blue edge and 
by replacing every coupon with red vertical strands connecting every
red input leg with the corresponding red output leg as shown in Figure \ref{F:CouponSmoothing}.

\begin{figure}[hbt]
 \centering
 \includegraphics{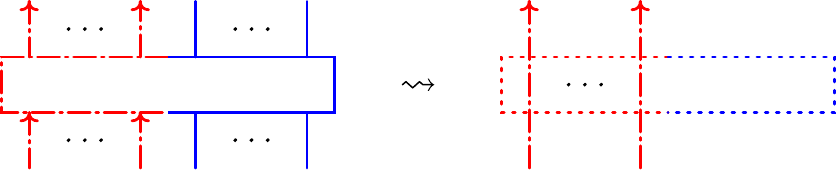}
 \caption{Smoothing of a coupon.}
 \label{F:CouponSmoothing}
\end{figure}

An \textit{$n$-string link graph} is a bichrome graph satisfying the following conditions:
\begin{enumerate}
 \item the first $n$ incoming boundary vertices and the first $n$ outgoing boundary vertices 
  are red, while all the other ones are blue;
 \item the red tangle obtained by smoothing is an $n$-string link.
\end{enumerate}
For an example of a 2-string link graph see Figure \ref{F:Example2Stringlink}.  

\begin{figure}[hbt]
  \centering
  \includegraphics{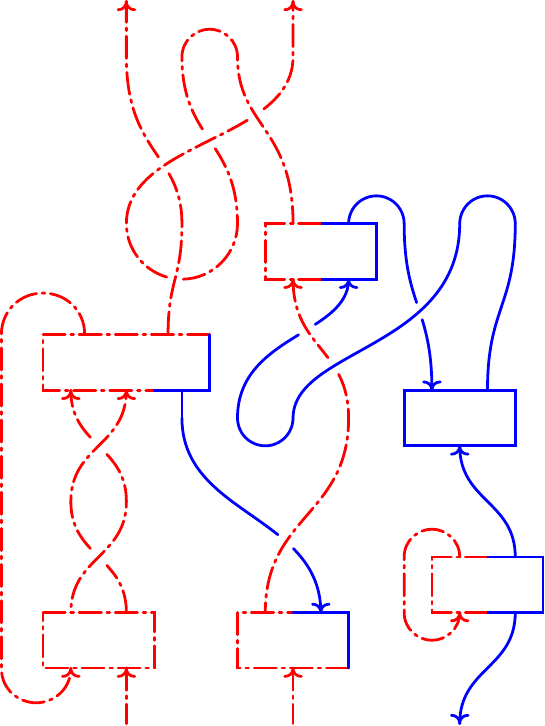}
  \caption{A 2-string link graph.}
  \label{F:Example2Stringlink}
 \end{figure}

Next we define the \textit{category $[n]\calR_{\lambda}$ of $\calC$-colored $n$-string link graphs}.
An object $[n](\underline{\epsilon},\underline{V})$ of $[n]\calR_{\lambda}$ is a finite sequence 
\[
 ( \underbrace{(+,H), \ldots, (+,H)}_n,(\epsilon_1,V_1), \ldots, (\epsilon_k,V_k) )
\]
where $\epsilon_i\in\{ +, - \}$ is a sign  and $V_i$ is an object of
$\calC$ for all $i \in \{1, \ldots, k\}$.  
A morphism
$T : [n](\underline{\epsilon},\underline{V}) \rightarrow
[n](\underline{\epsilon'},\underline{V'})$
of $[n]\calR_{\lambda}$ is an isotopy class of embeddings in
$\R^2\times[0,1]$ of $\calC$-colored $n$-string link graphs from
$[n](\underline{\epsilon},\underline{V})$ to
$[n](\underline{\epsilon'},\underline{V'})$, where
$\calC$-colorings of bichrome graphs are required to assign
the color $H$ to every red edge and to assign a morphism of $[k]\calC$
to every coupon having $2k$ red legs. The category $[0]\calR_{\lambda}$ will just be denoted $\calR_{\lambda}$.

A morphism of $[n]\calR_{\lambda}$ is \textit{open} if its smoothing features no closed component.
We denote with $[n]\calR_{\calC}$ the subcategory of $[n]\calR_{\lambda}$ having the same objects 
but featuring only open morphisms.

\begin{remark}
 The forgetful functor from $[n]\calR_{\calC}$ to the ribbon category $\calR_{\calC}$ of $\calC$-colored ribbon graphs 
 allows us to define the Reshetikhin-Turaev functor $F_{\calC}$ on $[n]\calR_{\calC}$ by dropping the distinction between 
 red and blue edges.
\end{remark}

\begin{proposition}\label{image_RT}
 The Reshetikhin-Turaev functor $F_{\calC} : [n]\calR_{\calC} \rightarrow \calC$ factors through $[n]\calC$ 
 for every $n \in \N$.
\end{proposition}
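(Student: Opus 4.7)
The plan is to show that $F_\calC(T) \in \Hom_{[n]\calC}([n]V, [n]V')$ by performing a Hennings-style R-matrix expansion at every crossing of $T$ and organizing the result along each red strand as a single left multiplication. First observe that the class of morphisms of the form $\sum_i L_{\underline{x_i}} \otimes f_i$ is closed under composition,
\[
\Bigl(\sum_i L_{\underline{x_i}} \otimes f_i\Bigr) \circ \Bigl(\sum_j L_{\underline{y_j}} \otimes g_j\Bigr) = \sum_{i,j} L_{\underline{x_i} \underline{y_j}} \otimes (f_i \circ g_j),
\]
and is closed under tensoring with identities on red strands and with elementary blue morphisms. Since $F_\calC$ is monoidal, it therefore suffices, for a fixed diagram of $T$, to exhibit a single decomposition of $F_\calC(T)$ of this form.

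Fix a diagram of $T$ and apply the standard R-matrix expansion at every crossing: $c_{V,W}(v \otimes w) = \sum_i (b_i \cdot w) \otimes (a_i \cdot v)$ for a positive crossing, the analogous formula from $R^{-1}$ for a negative one, with pivotal and antipode adjustments when dualized objects are involved. This writes $F_\calC(T)$ as a finite sum over ``resolutions'', in each of which every crossing has been replaced by a non-crossing configuration with a bead (an element of $H$) placed on each of the two involved strands.

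For a fixed resolution, the underlying tangle decomposes into $n$ red arcs running from bottom to top (because the smoothing of $T$ is an $n$-string link) together with a blue ribbon subgraph with arbitrary caps, cups and coupons, all decorated with beads. Reading the beads on the $i$-th red arc in order and multiplying them produces an element $x^{(i)} \in H$, so that the total red contribution is left multiplication by $\underline{x} = x^{(1)} \otimes \cdots \otimes x^{(n)} \in H^{\otimes n}$. The blue subgraph, with its beads acting on blue colors via the corresponding representations and with coupons evaluated by their $[k]\calC$-colorings (already themselves of the form $\sum L_{\underline{y}} \otimes g$), produces a linear map $f : V \to V'$. Thus each resolution contributes a morphism $L_{\underline{x}} \otimes f$, and summing over resolutions yields the desired decomposition of $F_\calC(T)$.

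The main obstacle is purely bookkeeping: one must check that every bead produced by an R-matrix expansion genuinely multiplies into the appropriate red strand tensor (rather than being absorbed by a blue action) and that the resulting sum does reproduce the full Reshetikhin-Turaev evaluation. This is the same bookkeeping that, in the next subsection, will be used to extend $F_\calC$ to closed red components via the right integral $\lambda$ and define the Hennings-Reshetikhin-Turaev functor $F_\lambda$.
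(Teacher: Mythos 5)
Your proposal is correct and takes essentially the same route as the paper: the paper formalizes exactly your crossing-by-crossing R-matrix expansion as a ``bead functor'' $B_{\calC}$ into an auxiliary beaded category $\calB_{\Vect_{\Bbbk}}$, and then concludes, as you do, that the beads accumulated along the red strands (which carry the regular representation) compose to left multiplications $L_{\underline{x_i}}$ while the blue part yields the linear maps $f_i$. The remaining bookkeeping you flag (antipode and pivotal adjustments at caps, cups and reversed strands) is treated at the same level of detail in the paper's own proof, so there is no substantive difference.
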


\begin{proof}
 Let us consider the category $\calB_{\Vect_{\Bbbk}}$ defined as the quotient of the free linear category generated by 
 $\calR_{\Vect_{\Bbbk}}$
 with respect to the Reshetikhin-Turaev functor $F_{\Vect_{\Bbbk}}$. This means $\calB_{\Vect_{\Bbbk}}$ has the same objects as 
 $\calR_{\Vect_{\Bbbk}}$, while 
 vector spaces of morphisms of $\calB_{\Vect_{\Bbbk}}$ are given by quotients of free vector spaces generated by morphism spaces of 
 $\calR_{\Vect_{\Bbbk}}$ with respect to kernels of the linear maps defined by $F_{\Vect_{\Bbbk}}$. 
 In the category $\calB_{\Vect_{\Bbbk}}$ we 
 represent certain morphisms in bead notation. A bead is a dot labeled with an element of $H$ lying on an edge colored with
 an object of $\calC$ inside a morphism of $\calB_{\Vect_{\Bbbk}}$.
 It represents a coupon in $\calR_{\Vect_{\Bbbk}}$ determined as follows: 
 if $\rho_V : H \rightarrow \End_{\Bbbk}(V)$ is a finite-dimensional representation of $H$ then a bead labeled with 
 $x \in H$ on a strand colored with $V$ represents a coupon colored with $\rho_V(x)$ if the strand is directed upwards, while 
 it represents a coupon colored with $\rho_V(S(x))^*$ if the strand is directed downwards. See Figure \ref{beads} for a graphical
 representation. Remark that these coupons are not in general coupons in $\calR_{\calC}$ because $\rho_V(x)$ may not be an 
 $H$-module morphism. 
 
 \begin{figure}[hbt]
  \centering
  \includegraphics{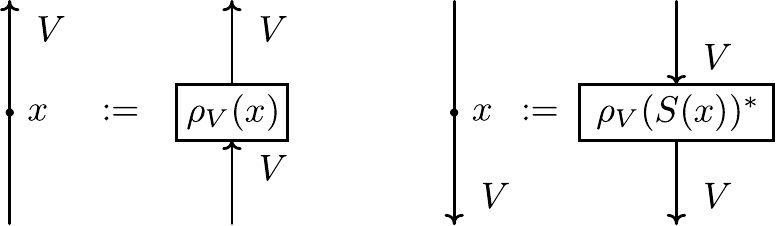}
  \caption{Beads.}
  \label{beads}
 \end{figure}
 
 We claim the functor $F_{\calC} : [n]\calR_{\calC} \rightarrow \calC$ factors through $\calB_{\Vect_{\Bbbk}}$. 
 Indeed we have a bead functor 
 $B_{\calC} : [n]\calR_{\calC} \rightarrow \calB_{\Vect_{\Bbbk}}$ which is the identity on objects and which has the following behaviour on 
 morphisms: given a diagram for a morphism of $[n]\calR_{\calC}$ which is presented as a composition of tensor products of 
 elementary ribbon graphs, the functor $B_{\calC}$ introduces beads on crossings, caps and cups and takes linear combinations of
 the morphisms thus obtained. Figure \ref{beads_functor} contains a graphical definition for the image under $B_{\calC}$
 of some of the generating morphisms of $[n]\calR_{\calC}$. 
 
 \begin{figure}[hbt]
  \centering
  \includegraphics{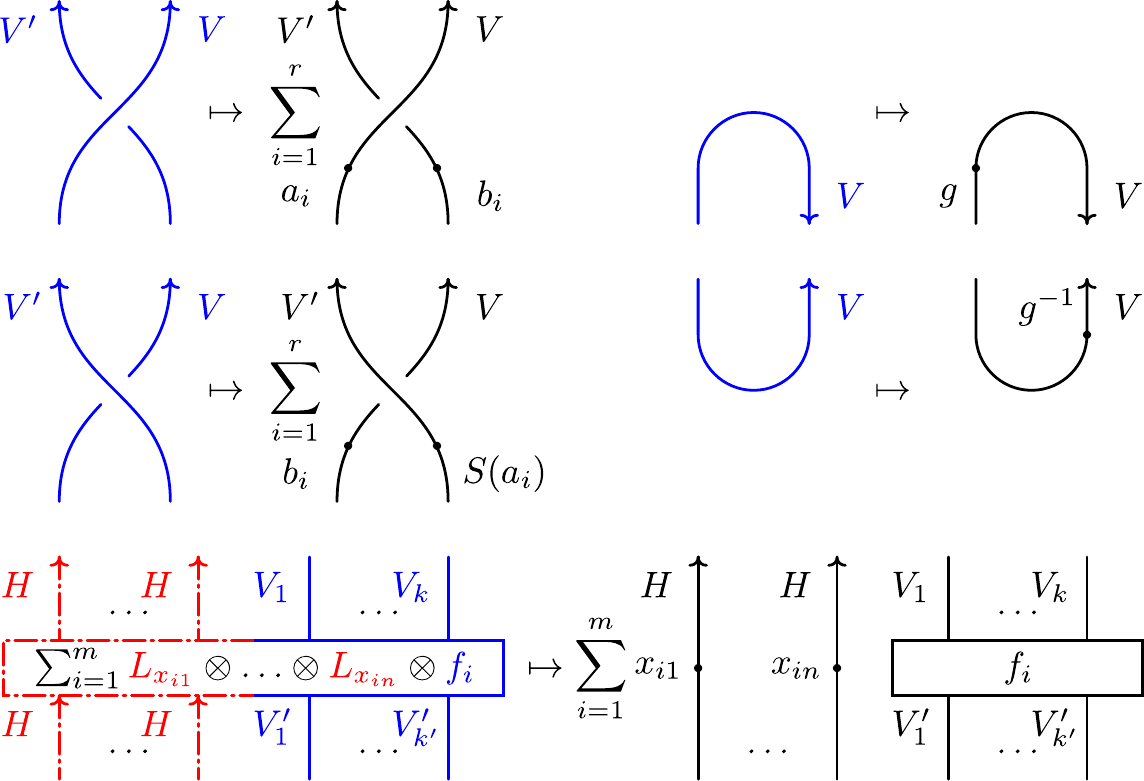}
  \caption{Graphical definition of the functor $B_{\calC}$.}
  \label{beads_functor}
 \end{figure}
 
 The functor $B_{\calC}$ introduces no bead on left caps and cups. 
 If a crossing is obtained from one of the two represented in Figure \ref{beads_functor} by 
 reversing the orientation of an edge then the label of the corresponding bead has to be replaced with its image under 
 the antipode $S$. The definition is the same for crossings involving red edges and for red caps and cups.
 
 Now the composition of the Reshetikhin-Turaev functor $F_{\calC} : [n]\calR_{\calC} \rightarrow \calC$ 
 with the forgetful functor from $\calC$ to $\Vect$ 
 can be computed as the composition of the bead functor $B_{\calC} : [n]\calR_{\calC} \rightarrow \calB_{\Vect_{\Bbbk}}$ with the functor
 from $\calB_{\Vect_{\Bbbk}}$ to $\Vect$ induced on the quotient by $F_{\Vect_{\Bbbk}}$. This makes it clear that
 morphisms in the image of $F_{\calC}$ are linear combinations of the form $\sum_{i=1}^m L_{\underline{x_i}} \otimes f_i$ for some elements $\underline{x_1},\ldots,\underline{x_m}$ of $H^{\otimes n}$ and for some linear maps $f_1,\ldots,f_m$. 
\end{proof}

Now if $(\underline{\epsilon},\underline{V})$ and $(\underline{\epsilon'},\underline{V'})$ are objects of $\calR_{\calC}$ and $n > 0$
let us consider the map 
\[
 \int_{\calR} : \Hom_{[n]\calR_{\lambda}}([n](\underline{\epsilon},\underline{V}),[n](\underline{\epsilon'},\underline{V'}))
 \rightarrow \Hom_{[n-1]\calR_{\lambda}}([n-1](\underline{\epsilon},\underline{V}),[n-1](\underline{\epsilon'},\underline{V'}))
\]
defined by the braid closure of the leftmost red strand represented in Figure \ref{integral}.

\begin{figure}[hbt]
 \centering
 \includegraphics{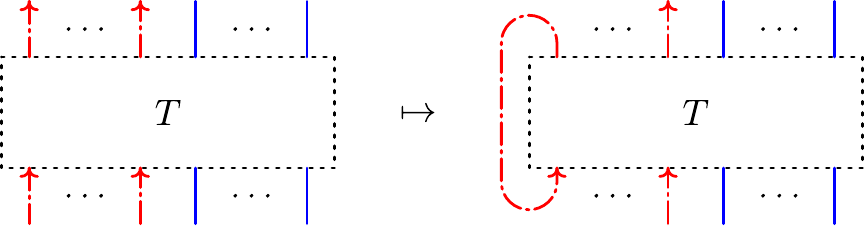}
 \caption{Graphical definition of the map $\int_{\calR}$.}
 \label{integral}
\end{figure}

\begin{proposition}\label{HRT_functor_proposition}
 There exists a unique family of functors $F_{\lambda} : [n]\calR_{\lambda} \rightarrow \calC$ extending 
 $F_{\calC} : [n]\calR_{\calC} \rightarrow \calC$ for every $n \in \N$ and satisfying
 \[
  \int_{\calC} \circ \ F_{\lambda} = F_{\lambda} \circ \int_{\calR} .
 \]
\end{proposition}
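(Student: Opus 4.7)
My plan is to define $F_\lambda$ by recognizing every morphism $T:[n](\underline{\epsilon},\underline{V})\to [n](\underline{\epsilon'},\underline{V'})$ in $[n]\calR_\lambda$ as obtainable from an open morphism by iterated application of $\int_\calR$. Concretely, if $T$ has $k$ closed red components, then by an ambient isotopy I can put $T$ in the form $\int_\calR^k(T^\circ)$, where $T^\circ$ is an open morphism of $[n+k]\calR_\calC$ obtained by cutting each closed red component at a chosen point and isotoping the resulting strand into leftmost position just above and below the rest of the graph. Once such a presentation is fixed, the two requirements of the proposition force
\[
 F_\lambda(T) \;=\; \int_\calC^k \circ \; F_\calC (T^\circ),
\]
which is meaningful by Proposition \ref{image_RT} since $F_\calC(T^\circ)$ lands in $[n+k]\calC$. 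This settles uniqueness; only existence remains.

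For existence I would take the displayed formula as the definition of $F_\lambda(T)$ and show it is independent of the opening $T^\circ$. Any two openings of the same $T$ differ by a finite sequence of elementary modifications: relabeling the chosen cut point along a single red component, permuting the order in which closed components are opened, or an ambient isotopy of the underlying bichrome graph. Ambient isotopy is automatic since the formula factors through the Reshetikhin--Turaev functor $F_\calC$, and permuting openings is handled by a Fubini argument for $\int_\calC$, which itself follows from an inspection of the defining formula for $\int_\calC$ together with the interchange law. The nontrivial case is sliding the cut point along a single closed red component. Translating into the bead calculus from the proof of Proposition \ref{image_RT}, such a slide transports a product of elements of $H$ past the point of closure; after the Hopf algebra axioms are applied, the argument of $\lambda$ gets changed from $x$ to $S^2(y)\,x\,y^{-1}$ for some $y\in H$ read off from the portion of the graph being crossed. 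Invariance then results from $\lambda$ being a quantum character, $\lambda(xy)=\lambda(S^2(y)x)$, which is guaranteed by unimodularity.

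Functoriality is then routine. Given composable $T_1,T_2$ with openings $T_1^\circ,T_2^\circ$, the composite $T_2\circ T_1$ has a natural opening $T_2^\circ\circ T_1^\circ$ with $k_1+k_2$ opened strands, and $\int_\calC^{k_1+k_2}$ factors as $\int_\calC^{k_2}\circ\int_\calC^{k_1}$ (again by the Fubini argument), yielding $F_\lambda(T_2\circ T_1)=F_\lambda(T_2)\circ F_\lambda(T_1)$; monoidality is handled analogously. The extension property $F_\lambda\restriction_{[n]\calR_\calC}=F_\calC$ holds because open morphisms have $k=0$, and the compatibility $\int_\calC \circ F_\lambda = F_\lambda \circ \int_\calR$ is built into the definition.

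The main obstacle is the cut-point invariance of the previous paragraph; this is the categorified form of the classical Hennings lemma stating that the right integral provides a well-defined evaluation of a closed red component decorated with arbitrary elements of $H$. The only ingredients entering this argument beyond bookkeeping with the bead functor $B_\calC$ are the quantum-character identity for $\lambda$ (which is precisely unimodularity) and the ribbon structure used to encode the pivotal element $\piv$ on caps and cups; once these are in place, every other verification in the proof is mechanical.
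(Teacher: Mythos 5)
Your outline coincides with the paper's: define $F_\lambda(T)$ as $\int_\calC^{\,k} F_\calC(T^\circ)$ for an opening $T^\circ$, observe that uniqueness is then forced, and reduce existence to independence of the chosen opening. The gap is in that independence. You claim two openings of the same $T$ differ only by sliding the cut point, permuting the order of the openings, and ambient isotopy, and you dismiss the last as automatic. But an opening also depends on the \emph{path} along which the cut strand is routed to the leftmost position --- over or under which strands, through or around which components it passes --- and two openings with different routings are in general not ambient isotopic in $[n+k]\calR_{\calC}$ (already for a single closed red unknot next to a blue circle, routing the opened strand through or around the circle gives non-isotopic openings), nor are they related by moving the cut point or permuting components. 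Showing that all routings give the same value is the core of the paper's proof: in the bead calculus the outgoing and returning parallel copies of the path collect the two legs $x_{\gamma(1)}$, $x_{\gamma(2)}$ of a coproduct, the closed component evaluates to $\lambda(S(x_{\gamma(1)})\, x_C\, \piv\, x_{\gamma(2)})$, the quantum character property collapses this to $\epsilon(x_\gamma)\lambda(x_C \piv)$, and $\epsilon(x_\gamma)=1$ because the beads picked up along the path are R-matrix legs and pivotal elements, all with counit $1$. Your argument contains no substitute for this computation, and it is precisely the step that cannot be absorbed into ``ambient isotopy is automatic''.

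A smaller point: in your cut-point step the claimed change of the argument of $\lambda$ from $x$ to $S^2(y)\,x\,y^{-1}$ is not meaningful, since a generic bead product $y\in H$ need not be invertible (think of nilpotent generators). What is actually used is that $x\mapsto\lambda(x\piv)$ is a symmetric form, i.e.\ the quantum character identity combined with $S^2=\piv(\cdot)\piv^{-1}$; this is how the paper disposes of the base-point choice. Once routing-independence is supplied, the rest of your argument (Fubini for $\int_\calC$, functoriality, and the compatibility with $\int_\calR$, which indeed follows formally from well-definedness since an opening of $T$ is also an opening of $\int_\calR T$) is fine.
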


\begin{proof}
 When $T$ is a morphism of $[n]\calR_{\lambda}$ then we say a
 morphism $T'$ of $[n+k]\calR_{\calC}$ is obtained by \textit{opening $T$} if
 \[
  \underbrace{\int_{\calR} \cdots \int_{\calR}}_k T' = T.
 \]
 We want to construct a family of functors $F_{\lambda} : [n]\calR_{\lambda} \rightarrow \calC$ 
 with the desired properties.
 Let us define it as follows:
 if $[n](\underline{\epsilon},\underline{V})$ is an object of $[n]\calR_{\lambda}$ then we set
 $F_{\lambda}([n](\underline{\epsilon},\underline{V})) := F_{\calC}([n](\underline{\epsilon},\underline{V}))$. 
 If $T$ is a morphism of $[n]\calR_{\lambda}$ then we set 
 \[
  F_{\lambda}(T) := \underbrace{\int_{\calC} \cdots \int_{\calC}}_k F_{\calC}(T')
 \]
 where $T'$ is a morphism of $[n+k]\calR_{\calC}$ obtained by opening $T$ for some $k \geq 0$.

 The proof that $F_{\lambda}$ is well-defined requires a little preparation, as we need to introduce some terminology.
 First of all, we need to consider a stabilization functor 
 \[
  [1] : [n]\calR_{\lambda} \rightarrow [n+1]\calR_{\lambda}
 \]
 mapping every object $[n](\underline{\epsilon},\underline{V})$ of $[n]\calR_{\lambda}$ to the object
 $[n+1](\underline{\epsilon},\underline{V})$ of $[n+1]\calR_{\lambda}$ and mapping every morphism $T$ of 
 $\Hom_{[n]\calR_{\lambda}}([n](\underline{\epsilon},\underline{V}),[n](\underline{\epsilon'},\underline{V'}))$ to the morphism
 of $\Hom_{[n+1]\calR_{\lambda}}([n+1](\underline{\epsilon},\underline{V}),[n-1](\underline{\epsilon'},\underline{V'}))$
 represented in Figure \ref{stabilization}.
 We denote with $[k]$ the $k$-fold composition of the functor $[1]$.

 \begin{figure}[hbt]
  \centering
  \includegraphics{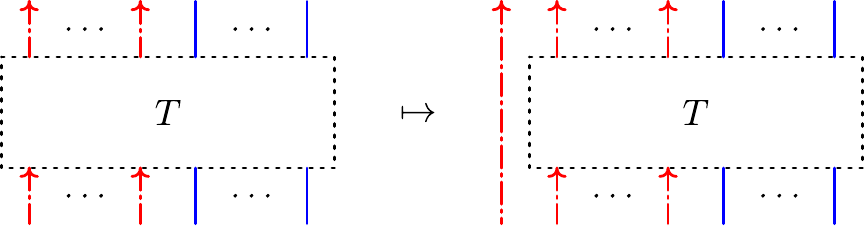}
  \caption{Graphical definition of the functor $[1]$.}
  \label{stabilization}
 \end{figure}

 We say a set $C$ of red edges of a morphism $T$ of $[n]\calR_{\lambda}$ is a \textit{chain} if all of its elements are contained in 
 one and the same component of the smoothing of $T$.
 A maximal chain in $T$ is called a \textit{cycle} if its corresponding component in the smoothing of $T$ 
 is closed, and it is called a \textit{relative cycle} otherwise.
 
 \begin{figure}[ht]
  \centering
  \includegraphics{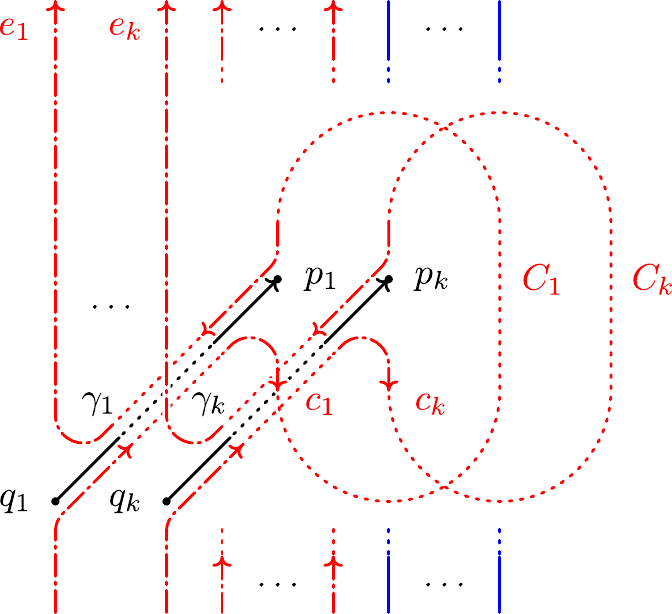}
  \caption{Diagram representing $([k]T)_{\underline{\gamma}}$.}
  \label{HRT_functor_figure}
 \end{figure}
 
 We want to show that if $T$ is a morphism of $[n]\calR_{\lambda}$ featuring exactly $k$ cycles $C_1, \ldots, C_k$
 then there exists a morphism of $[n+k]\calR_{\calC}$ which is obtained by opening $T$. 
 To define it we choose a point $p_i$ along some red edge $c_i \in C_i$ and a point $q_i$ along the red edge $e_i$ 
 connecting the $i$-th incoming boundary vertex to the $i$-th outgoing boundary vertex of $[k]T$ for all $i \in \{1, \ldots, k\}$.
 Then we consider pairwise disjoint embeddings $\iota_1, \ldots, \iota_k$ of $D^1 \times D^1$ into $\R^2 \times I$ with 
 $\iota_i((D^1 \smallsetminus \partial D^1) \times D^1)$ contained in the complement of $[k]T$, with 
 $(\iota_i(\{ -1 \} \times D^1),\iota_i(-1,0)) \subset (c_i,p_i)$ and with 
 $(\iota_i(\{ 1 \} \times \overline{D^1}),\iota_i(1,0)) \subset (e_i,q_i)$ for all $i \in \{1,\ldots, k\}$ (see the beginning of this section for our notations and conventions about orientations).
 We denote with $\gamma_i$ the path $\iota_i(D^1 \times \{ 0 \})$ from $q_i$ to $p_i$
 and we let $([k]T)_{\underline{\gamma}}$ be the morphism of $[n+k]\calR_{\lambda}$ obtained from $[k]T$ by index 1 surgery along
 $\iota_1, \ldots, \iota_k$. Up to isotopy, this morphism can be represented by a diagram like the one depicted in 
 Figure \ref{HRT_functor_figure}. Then by construction
 \[
  \underbrace{\int_{\calR} \cdots \int_{\calR}}_k ([k]T)_{\underline{\gamma}} = T.
 \] 
 
 \begin{figure}[ht]
  \centering
  \includegraphics{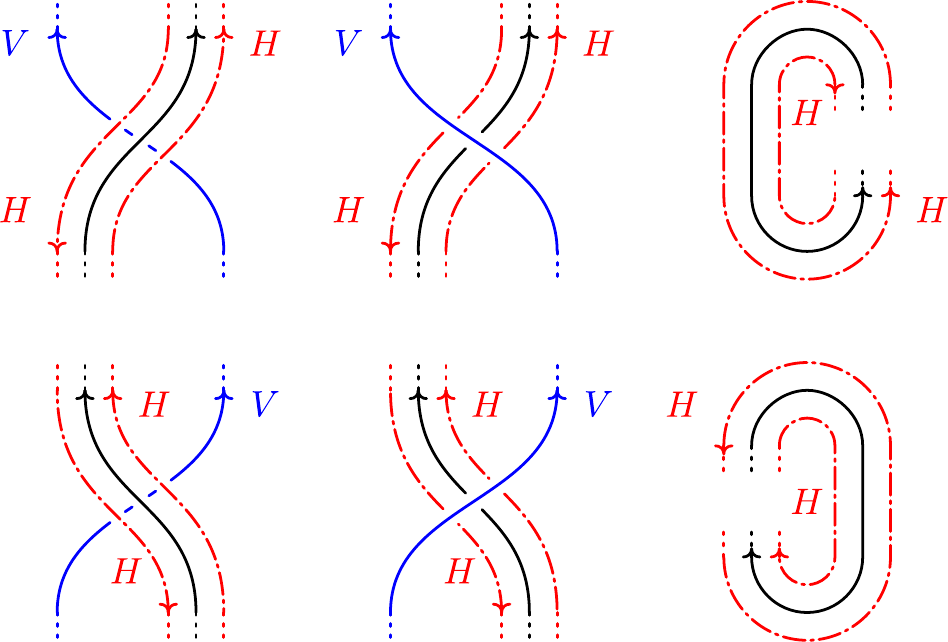}
  \caption{Local appearance of the surgered morphism $([k]T)_{\underline{\gamma}}$ around $\gamma_i$.}
  \label{independence_of_path_1}
 \end{figure}
 
 We first have to prove that $F_{\lambda}(T)$ does not depend on the choice of the embeddings $\iota_1, \ldots, \iota_k$.
 Up to isotopy we can suppose regions of the diagram around $\gamma_i$ locally look like Figure \ref{independence_of_path_1},
 possibly up to replacing blue strands with red strands. 
 Just like in the proof of Proposition \ref{image_RT} we can compute $F_{\calC}(([k]T)_{\underline{\gamma}})$ by passing
 through $\calB_{\Vect_{\Bbbk}}$.
 Let us follow the $i$-th relative cycle $(C_i \cup e_i)_{\gamma_i}$ of $([k]T)_{\underline{\gamma}}$ 
 obtained from $C_i$ and $e_i$ by surgery along $\iota_i$. We first collect beads along a parallel copy of 
 $\gamma_i$ which compose to give an element $x_{\gamma_i(2)}$. 
 Then we meet a bead labeled with the pivotal element $g$ and, moving on along the cycle $C_i$, we
 collect an element $x_{C_i}$. Finally the travel along a parallel copy of $\overline{\gamma_i}$
 contributes with an element $S(x_{\gamma_i(1)})$. All of this follows from the analysis of the beads associated with local models
 coming from Figure \ref{independence_of_path_1}, as summarized in Figure \ref{independence_of_path_2}.
 
 \begin{figure}[hb]
  \centering
  \includegraphics{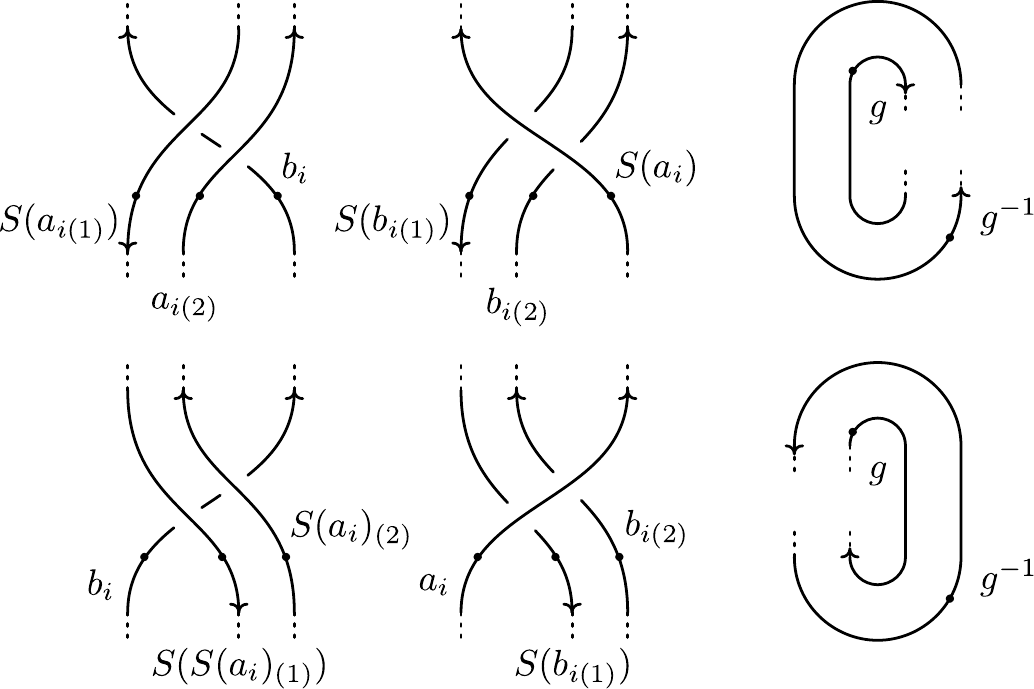}
  \caption{Beads around $\gamma_i$.}
  \label{independence_of_path_2}
 \end{figure}

 Let us denote with $([k]T)_{\underline{\gamma}} \smallsetminus (\underline{C} \cup \underline{e})_{\underline{\gamma}}$ 
 the morphism of $\calB_{\Vect_{\Bbbk}}$ obtained from $B_{\calC}(([k]T)_{\underline{\gamma}})$
 by removing all edges corresponding to relative cycles $(C_i \cup e_i)_{\gamma_i}$ for all $i \in \{1, \ldots, k\}$.
 Analogously, let us denote with $T \smallsetminus \underline{C}$ 
 the morphism of $\calB_{\Vect_{\Bbbk}}$ obtained from $B_{\calC}(T)$
 by removing all edges corresponding to cycles $C_i$ for all $i \in \{1, \ldots, k\}$.
 Then
 \begin{align*}
  \underbrace{\int_{\calC} \cdots \int_{\calC}}_k F_{\calC}(([k]T)_{\underline{\gamma}}) 
  &= \left( \prod_{i=1}^k \lambda(S(x_{\gamma_i(1)})x_{C_i}gx_{\gamma_i(2)}) \right) \cdot 
  F_{\Vect_{\Bbbk}}(([k]T)_{\underline{\gamma}} \smallsetminus (\underline{C} \cup \underline{e})_{\underline{\gamma}}) \\ 
  &= \left( \prod_{i=1}^k \epsilon(x_{\gamma_i}) \lambda(x_{C_i}g) \right) \cdot  
  F_{\Vect_{\Bbbk}}((([k]T)_{\underline{\gamma}} \smallsetminus (\underline{C} \cup \underline{e})_{\underline{\gamma}}) \\ 
  &= \left( \prod_{i=1}^k \lambda(x_{C_i}g) \right) \cdot 
  F_{\Vect_{\Bbbk}}(T \smallsetminus \underline{C}). 
 \end{align*}
 The second equality follows from $\lambda$ being a quantum character, and the last equality follows from the fact that the pivotal element and the R-matrix of a quasitriangular Hopf algebra satisfy
 \begin{gather*}
  \epsilon(g) = \epsilon(g^{-1}) = 1, \\
  (\epsilon \otimes \id_H)(R) = (\id_H \otimes\ \epsilon)(R) = 
  (\epsilon \otimes \id_H)(R^{-1}) = (\id_H \otimes\ \epsilon)(R^{-1}) = 1_H.
 \end{gather*}
 This proves that the definition of $F_{\lambda}(T)$ is actually independent of the choice of the embeddings 
 $\iota_1, \ldots, \iota_k$.
 The fact that it does not depend on the choice of the points $p_i$ either follows from the fact that $\lambda \circ R_g^*$
 is a character of $H$.

 To prove the equality in the statement
 let us consider a morphism $T$ in $[n]\calR_{\lambda}$, and let us denote by $\sum_{i = 1}^m L_{x_i} \otimes L_{\underline{x_i}} \otimes f_i$ its image under $F_{\lambda}$, with $x_i \in H$ and $\underline{x_i} \in H^{\otimes n-1}$ for every $i=1,\ldots,m$. Then 
 \[
  \int_{\calC} F_{\lambda}(T) = \int_{\calC} \left( \sum_{i = 1}^m L_{x_i} \otimes L_{\underline{x_i}} \otimes f_i \right) = 
  \sum_{i=1}^m \lambda(x_i) \cdot (L_{\underline{x_i}} \otimes f_i).
 \]
 On the other hand
 \[
  F_{\lambda} \left( \int_{\calR}T \right) = \sum_{i=1}^m \lambda(x_ig^{-1}g) \cdot (L_{\underline{x_i}} \otimes f_i).
 \]

 The unicity of the family of functors $F_{\lambda} : [n]\calR_{\lambda} \rightarrow \calC$ follows from the fact
 that every morphism $T$ of $[n]\calR_{\lambda}$ admits a morphism $T'$ of $[n+k]\calR_{\lambda}$ 
 obtained by opening $T$. Then every family of functors $F_{\lambda}$ satisfying
 \[
  \int_{\calC} \circ \ F_{\lambda} = F_{\lambda} \circ \int_{\calR}
 \]
 also satisfies
 \[
  F_{\lambda} (T) = F_{\lambda} \left( \underbrace{\int_{\calR} \cdots \int_{\calR}}_k T' \right) =
 \underbrace{\int_{\calC} \cdots \int_{\calC}}_k F_\calC(T'). \qedhere
 \]
\end{proof}

\FloatBarrier

For every $n \geq 0$ the functor $F_{\lambda} : [n]\calR_{\lambda} \rightarrow \calC$ 
is called the \textit{Hennings-Reshetikhin-Turaev functor} associated with $\lambda$.

\begin{remark}
 The definition we gave for $F_{\lambda} : [n]\calR_{\lambda} \rightarrow \calC$
 in terms of $F_{\calC}$ and $\int_{\calC}$ makes it clear that this functor too factors through $[n]\calC$ .
\end{remark}

\subsection{Renormalized Hennings invariant of closed 3-manifolds}\label{3-manifold_invariants}

This subsection is devoted to the construction of a closed 3-manifold invariant relying on two main ingredients: 
the modified trace, whose existence is ensured by Theorem \ref{T:BBGa}, and the Hennings-Reshetikhin-Turaev functor,
which was introduced in Subsection \ref{string_link_graphs}. 
These tools allow for the definition of an invariant of closed $\calC$-colored 
bichrome graphs satisfying a certain admissiblity condition. 
Indeed, in order to be able to compute the modified trace, we need a blue edge whose color is a projective object of $\calC$. 
With this in place, we can define a renormalized Hennings invariant of 
closed 3-manifolds equipped with admissible closed $\calC$-colored bichrome graphs.
Theorems \ref{T:F'Exists} and \ref{T:3ManInvMainT} prove the existence of such invariants.

A bichrome graph featuring no boundary vertex is called a
\textit{closed bichrome graph}. 
When $T$ is a closed $\calC$-colored bichrome graph and $V$ is a projective object we say an endomorphism $T_V$ of $(+,V)$ in 
$\calR_{\lambda}$ is a \textit{cutting presentation of $T$} if
\[
\rev_{(+,V)} \circ\ (T_V \otimes \id_{(-,V)})\ \circ \lcoev_{(+,V)}\ = T.
\]

We say a $\calC$-colored bichrome graph is \textit{admissible} if it features a
blue edge whose color is a projective $H$-module.

\begin{theorem}\label{T:F'Exists}
 If $T$ is an admissible closed $\calC$-colored bichrome graph and 
 $T_V$ is a cutting presentation of $T$ then
 \[
  F'_{\lambda}(T) := \rmt_V(F_{\lambda}(T_V))
 \]
 is an invariant of the isotopy class of $T$.
\end{theorem}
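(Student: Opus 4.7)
The plan is to verify invariance of $\rmt_V(F_\lambda(T_V))$ under two independent choices: the position of the cut along a fixed projective blue edge of $T$, and the selection of that edge when several projective blue edges are available. The first reduces to isotopy invariance of $F_\lambda$; the second requires the partial trace axiom of the modified trace.

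First I would fix a projective blue edge $e$ of $T$ colored by $V$. Any two cutting presentations $T_V, T_V'$ arising from different cut points on $e$ (or from isotopic representatives of $T$ agreeing near the cut) are related by an isotopy in $\calR_\lambda$. By Proposition \ref{HRT_functor_proposition} the functor $F_\lambda$ respects ambient isotopy, so $F_\lambda(T_V) = F_\lambda(T_V')$ as morphisms in $\End_\calC(V)$, and applying $\rmt_V$ preserves the equality.

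Next I would address independence from the choice of edge. Suppose $e, e'$ are two projective blue edges of $T$, colored by $V$ and $W$ respectively. Using an ambient isotopy of $T$ I would push $e$ and $e'$ into a small ball, arranging them as two upward-oriented parallel strands disjoint from the rest of the graph, and then cut both simultaneously to obtain a morphism $T_{V,W} \in \End_{\calR_\lambda}((+,V) \otimes (+,W))$. Closing the $W$-strand by right duality produces a cutting presentation of $T$ along $e$ whose image under $F_\lambda$ coincides with $\ptr(F_\lambda(T_{V,W}))$. By the first step this may be used in place of $T_V$. Since projective objects form an ideal, $V \otimes W$ is projective, and property (2) of $\rmt$ gives
\[
\rmt_V(\ptr(F_\lambda(T_{V,W}))) = \rmt_{V \otimes W}(F_\lambda(T_{V,W})).
\]
A symmetric computation, closing the $V$-strand and invoking the left analogue of property (2) valid in any ribbon category, produces $\rmt_W(F_\lambda(T_W)) = \rmt_{V \otimes W}(F_\lambda(T_{V,W}))$. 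Combining the two yields the desired equality.

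The main obstacle will be the geometric setup at the start of the second step. One must justify that the two projective blue edges of $T$ can always be brought into a standard parallel configuration by an ambient isotopy, and that the resulting doubly-cut morphism $T_{V,W}$ simultaneously encodes both single-cut presentations when the opposite strand is closed. Handling orientations and framings consistently—in particular, the role of the pivotal structure hidden inside the $\rev$ and $\lcoev$ morphisms—is the key technical subtlety.
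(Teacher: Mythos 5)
Your proposal is correct and follows essentially the paper's own route: both arguments cut $T$ simultaneously along the two projective blue edges to obtain a double cutting presentation $T_{V,W}$, identify its one-strand closures with the given presentations up to isotopy, and then equate $\rmt_V(F_{\lambda}(T_V))$ and $\rmt_W(F_{\lambda}(T_W))$ with $\rmt_{V\otimes W}(F_{\lambda}(T_{V,W}))$ via the partial-trace axiom. The only cosmetic difference is that the paper closes both strands on the right, passing from $V\otimes V'$ to $V'\otimes V$ by cyclicity under the braiding, whereas you invoke the left partial-trace property directly, which is legitimate since the paper notes that this left version holds because $\calC$ is ribbon.
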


\begin{proof}
 The proof is similar to the one provided by \cite{GPT09}: indeed, the category
 $\calR_{\lambda}$ is a ribbon category with respect to the
 monoidal structure induced by concatenation of objects and
 disjoint union of morphisms. If $T_{V}$ and $T_{V'}$ are two different cutting
 presentations of $T$, and $c_{V,V'}$ is the braiding morphism associated with objects $(+,V)$ and $(+,V')$, 
 we can find an endomorphism $T_{V,V'}$ of $((+,V),(+,V'))$ such that $\ptr(T_{V,V'}) = T_V$ and $\ptr(c_{V,V'} \circ T_{V,V'} \circ c_{V,V'}^{-1}) = T_{V'}$.
 Then the properties of the modified trace imply
 \begin{align*}
  \rmt_V\bp{F_{\lambda}(T_V)} &= \rmt_{V \otimes V'}\bp{F_{\lambda}(T_{V,V'})} =  \rmt_{V' \otimes V} \bp{F_{\lambda}(c_{V,V'} \circ T_{V,V'} \circ c_{V,V'}^{-1})} \\
  &= \rmt_{V'}\bp{F_{\lambda}(T_{V'})}. \qedhere
 \end{align*}
\end{proof}

We call $F'_{\lambda}$ the \textit{renormalized invariant of admissible closed $\calC$-colored bichrome graphs}.

\begin{proposition}\label{P:FF'}
 Let $T,T'$ be two closed $\calC$-colored bichrome graphs. If $T'$ is admissible then 
 \[
  F'_{\lambda}(T \otimes T') = F_{\lambda}(T) F'_{\lambda}(T').
 \]
\end{proposition}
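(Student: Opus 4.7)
My plan is to reduce the claim directly to Theorem \ref{T:F'Exists} together with the linearity of the modified trace, using the fact that the Hennings-Reshetikhin-Turaev functor behaves well under disjoint union.

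First, since $T'$ is admissible, it contains some blue edge colored by a projective $H$-module $V$. Pick a cutting presentation $T'_V \in \End_{\calR_{\lambda}}((+,V))$ of $T'$ along this edge. The disjoint union $T \sqcup T'_V$, viewed as an endomorphism of $(+,V)$ in $\calR_{\lambda}$ (with $T$ placed next to $T'_V$ without linking), is clearly a cutting presentation of $T \otimes T'$, because applying $\rev_{(+,V)} \circ (\cdot \otimes \id_{(-,V)}) \circ \lcoev_{(+,V)}$ only affects the $T'_V$ factor. Since $T \otimes T'$ remains admissible (it still contains the projective blue edge from $T'$), Theorem \ref{T:F'Exists} gives
\[
F'_{\lambda}(T \otimes T') = \rmt_V\bp{F_{\lambda}(T \otimes T'_V)}.
\]

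Next I would verify that $F_{\lambda}$ is monoidal with respect to the disjoint-union monoidal structure on $\calR_{\lambda}$. Since $F_{\calC}$ is monoidal on $[n]\calR_{\calC}$ (it is the Reshetikhin-Turaev functor), and since $F_{\lambda}$ is defined from $F_{\calC}$ by applying $\int_{\calC}$ once for each cycle in the smoothing, the construction respects disjoint unions: opening the cycles of $T$ and $T'_V$ independently produces an opening of $T \otimes T'_V$, and the iterated integrals factor accordingly. Consequently
\[
F_{\lambda}(T \otimes T'_V) = F_{\lambda}(T) \otimes F_{\lambda}(T'_V).
\]
Because $T$ is closed, $F_{\lambda}(T) \in \End_{\calC}(\one) = \Bbbk$, so under the standard identification $\Bbbk \otimes V \cong V$ this tensor product is simply the scalar multiplication $F_{\lambda}(T) \cdot F_{\lambda}(T'_V) \in \End_{\calC}(V)$.

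Finally, by $\Bbbk$-linearity of $\rmt_V$,
\[
\rmt_V\bp{F_{\lambda}(T) \cdot F_{\lambda}(T'_V)} = F_{\lambda}(T) \cdot \rmt_V\bp{F_{\lambda}(T'_V)} = F_{\lambda}(T) \cdot F'_{\lambda}(T'),
\]
which yields the claimed identity. The only non-trivial point is the monoidality of $F_{\lambda}$ with respect to disjoint union of bichrome graphs containing closed components; this requires checking that the combinatorics of cycles and the associated integrals in the definition of $F_{\lambda}$ split along the disjoint union, which follows from the local nature of the opening procedure used in the proof of Proposition \ref{HRT_functor_proposition}.
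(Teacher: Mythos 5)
Your argument is correct and follows essentially the same route as the paper: choose a cutting presentation $T'_V$ of $T'$, observe that $T \otimes T'_V$ is a cutting presentation of $T \otimes T'$, and conclude from $F_{\lambda}(T \otimes T'_V) = F_{\lambda}(T)F_{\lambda}(T'_V)$ together with linearity of the modified trace. The paper's proof is exactly this, stated more briefly; your extra remarks on the monoidality of $F_{\lambda}$ under disjoint union just make explicit what the paper takes for granted.
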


\begin{proof}
 If $T'_V$ is a cutting presentation of $T'$ then $T \otimes T'_V$ is a
 cutting presentation of $T \otimes T'$ and the proposition follows
 from the fact that $F_{\lambda}(T \otimes T'_V) = F_{\lambda}(T)F_{\lambda}(T'_V)$.
\end{proof}

Recall the definition of the coefficients $\Delta_+ = \lambda(v^{-1})$ and $\Delta_- = \lambda(v)$ given at the beginning
of Subsection \ref{S:modified_traces}. We fix now a choice of a square root $\calD$ of $\Delta_- \Delta_+$ and we define
$\delta := \frac{\calD}{\Delta_-} = \frac{\Delta_+}{\calD}$.
Remark that this is the first place we use the non-degeneracy of $H$.

\begin{theorem}\label{T:3ManInvMainT}    
 If $M$ is a closed connected 3-manifold and $T$ is an admissible closed $\calC$-colored bichrome graph inside $M$ then
 \[
  \Hm(M,T) := \calD^{-1 - \ell} \delta^{- \sigma(L)} F'_{\lambda}(L \cup T)
 \]
 only depends on the diffeomorphism class of the pair $(M,T)$, with
 $L$ being a surgery presentation for $M$ given by a red $H$-colored $\ell$-component link inside $S^3$ and 
 $\sigma(L)$ being the signature of the linking matrix of $L$.
\end{theorem}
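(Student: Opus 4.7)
The plan is to invoke Kirby's theorem: two framed links in $S^3$ present diffeomorphic closed oriented 3-manifolds if and only if they are related by a finite sequence of ambient isotopies, stabilizations by disjoint $\pm 1$-framed unknots, and handle slides. Up to an ambient isotopy of $M$, the graph $T$ may always be arranged to lie inside a ball disjoint from the surgery region, so $T$ remains fixed throughout while the Kirby moves act only on the red $H$-colored components of $L$ inside the bichrome graph $L \cup T \subset S^3$. Isotopy invariance of $F'_\lambda(L \cup T)$ is Theorem \ref{T:F'Exists}, and the prefactor $\calD^{-1-\ell}\delta^{-\sigma(L)}$ depends only on the linking matrix of $L$, which is itself an isotopy invariant. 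It therefore remains to verify invariance of $\Hm(M,T)$ under the two Kirby moves.

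For stabilization by a $\pm 1$-framed red unknot $U_\pm$ disjoint from $L \cup T$, Proposition \ref{P:FF'} gives
\[
F'_\lambda(U_\pm \cup L \cup T) = F_\lambda(U_\pm) \cdot F'_\lambda(L \cup T).
\]
A direct computation in the bead calculus of Proposition \ref{HRT_functor_proposition}, where a $\pm 1$ framing produces a bead labeled by $v^{\mp 1}$ and the red closure evaluates via $\lambda$, yields $F_\lambda(U_\pm) = \Delta_\pm$. Since stabilization by $U_\pm$ increases $\ell$ by $1$ and $\sigma(L)$ by $\pm 1$, invariance reduces to the identities $\calD^{-1} \delta^{-1} \Delta_+ = 1$ and $\calD^{-1} \delta\, \Delta_- = 1$, both of which are immediate from $\calD^2 = \Delta_+ \Delta_-$ together with $\delta = \Delta_+/\calD = \calD/\Delta_-$. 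Non-degeneracy of $H$ is used here precisely to guarantee $\calD \neq 0$.

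Handle slides preserve both $\ell$ and $\sigma(L)$, so the prefactor is unchanged and the task is to prove $F'_\lambda(L \cup T) = F'_\lambda(L' \cup T)$ whenever $L'$ arises from $L$ by sliding one red $H$-colored component over another. Choosing a cutting presentation along a projective blue edge of $T$ situated away from the region of the slide, this reduces via $F'_\lambda = \rmt_V \circ F_\lambda$ to a local identity between two bichrome morphisms in $\calR_\lambda$ differing only by a slide among red $H$-colored strands. Translating the slide to the bead calculus and applying the defining identity $\lambda(x_{(1)}) \cdot x_{(2)} = \lambda(x) \cdot 1_H$ of the right integral, together with the standard R-matrix identities involving the counit, yields the required equality: this is the classical Hennings-Kauffman-Radford handle-slide argument, which transfers unchanged to the bichrome setting because only red $H$-colored edges are involved.

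The handle-slide step is the main obstacle. The stabilization computation is routine once $F_\lambda(U_\pm) = \Delta_\pm$ is recognized, whereas the handle slide requires a careful diagrammatic manipulation in the bead calculus and an application of the integral identity in precisely the correct Sweedler form. Once both moves are verified, combining them with Theorem \ref{T:F'Exists} and Kirby's theorem establishes independence of $\Hm(M,T)$ from the choice of surgery link $L$, completing the proof.
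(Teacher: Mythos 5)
Your stabilization step is fine and matches the paper: $F'_\lambda(U_\pm \cup L \cup T) = F_\lambda(U_\pm)F'_\lambda(L\cup T)$ by Proposition \ref{P:FF'}, $F_\lambda(U_\pm)=\Delta_\pm$, and the prefactor is engineered (via non-degeneracy, $\calD\neq 0$) to absorb this. The gap is in your reduction of the Kirby II step. You claim that, up to isotopy, $T$ lies in a ball disjoint from the surgery region, so that the Kirby moves only ever slide red components over red components. This is false on two counts. First, $T$ is an arbitrary admissible graph in $M$ and in general cannot be isotoped into a ball (e.g.\ an edge of $T$ may represent a nontrivial class in $H_1(M)$, which is generated by meridians of the surgery solid tori). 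Second, and more fundamentally, the version of Kirby's theorem needed for pairs $(M,T)$ (as in Turaev's book, and as used in the paper) relates two presentations $L\cup T$ and $L'\cup T'$ in $S^3$ by isotopies, stabilizations, and handle slides of \emph{any} component of $L\cup T$ over components of $L$: different pullbacks of the same embedded graph $T\subset M$ to the exterior of $L$ in $S^3$ differ precisely by slides of edges of $T$ over surgery components, so these slides cannot be avoided. Consequently you must also prove invariance of $F'_\lambda$ under sliding a blue edge colored by an arbitrary module $V$ (or a red edge of $T$) over a red surgery component. This is exactly Proposition \ref{P:slide} of the paper, and its proof is not the classical red-only Hennings--Kauffman--Radford argument transferred verbatim: after the slide the beads $S(x_{K(2)})$ and $g^{-1}$ sit on the $V$-colored edge, and one uses $\lambda(x_{(1)})\cdot x_{(2)}=\lambda(x)\cdot 1_H$ to show they act as the identity on $V$; the algebraic input is the same integral identity, but the statement you need is strictly more general than the one you invoke.

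A secondary omission: a surgery presentation is an unoriented framed link, while $F_\lambda$ requires oriented edges, so you must also check that $F'_\lambda(L\cup T)$ is unchanged under reversing the orientation of a closed red component. The paper proves this separately (Proposition \ref{P:orient}, using $\lambda(S(x)g)=\lambda(xg)$ via the bead calculus); your argument never addresses it. With these two points repaired --- slides of arbitrary edges of $T$ over surgery components, and orientation independence --- your outline coincides with the paper's proof.
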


\begin{proof}
 The proof follows the argument of Reshetikhin and Turaev by showing that the quantity
 $\calD^{-1 - \ell} \delta^{- \sigma(L)} F'_{\lambda}(L \cup T)$
 remains unchanged under orientation reversal of components of $L$ and under Kirby moves.
 These properties are proved in Subsection \ref{SS:PropForProof}:
 first of all, Proposition \ref{P:orient} implies that $F'_{\lambda}(L \cup T)$ is independent of the choice of the orientation
 of the surgery link $L$. Then, thanks to Proposition \ref{P:slide}, $F'_{\lambda}(L \cup T)$ is also 
 invariant under handle slides, known as the Kirby II move. 
 Finally, the invariance of $\Hm(M,T)$ under stabilizations, known as the Kirby I move,
 follows from the choice of the normalization factor $\calD^{-1 - \ell} \delta^{- \sigma(L)}$, which is made possible by the 
 non-degeneracy of $H$.
\end{proof}

We call $\Hm$ the \textit{renormalized Hennings invariant of admissible decorated closed 3-manifolds}.

\begin{remark}
 When we write $F'_{\lambda}(L \cup T)$ we are using a slightly
 abusive notation because $T$ is actually contained in $M$.  What we 
 mean is that we have a diffeomorphism between $S^3(L)$ and $M$, and
 that $T$ can be isotoped to be inside the image of the exterior of
 $L$ in $S^3$ under this diffeomorphism. We can therefore pull back
 $T$ to an admissible closed bichrome graph inside $S^3$ which is
 disjoint from $L$ and which we still denote with $T$.
\end{remark}

The renormalized Hennings invariant is related to the standard Hennings invariant $\HKR$ as follows.  

\begin{proposition}
 If $M$ and $M'$ are closed connected 3-manifolds and $T'$ is an admissible $\calC$-colored bichrome graph inside $M'$
 then 
 \[
  \Hm(M \# M',T')= \HKR(M) \Hm(M',T').
 \]
\end{proposition}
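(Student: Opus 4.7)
The plan is to apply the multiplicativity of the renormalized functor $F'_{\lambda}$ (Proposition \ref{P:FF'}) to a surgery presentation of $M \# M'$ obtained by juxtaposing surgery presentations of $M$ and $M'$. The argument proceeds in three main steps.

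First, I would choose a surgery presentation $L \subset S^3$ for $M$ with $\ell$ red components (all colored by the regular representation $H$) and a surgery presentation $L' \subset S^3$ for $M'$ with $\ell'$ red components. By general position, the admissible closed $\calC$-colored bichrome graph $T'$ can be isotoped inside the exterior of $L'$, and an isotopy inside $S^3$ allows us to place $L$ in a small ball disjoint from a ball containing $L' \cup T'$. Then $L \sqcup L'$ is a surgery presentation for $M \# M'$ with $T'$ sitting in the $M'$-summand. Because the two sublinks lie in disjoint $3$-balls, the linking matrix of $L \sqcup L'$ is block-diagonal, so its signature is $\sigma(L) + \sigma(L')$ and its number of components is $\ell + \ell'$.

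Second, thanks to this spatial separation, the closed $\calC$-colored bichrome graph $L \sqcup L' \cup T'$ is isotopic to the tensor product $L \otimes (L' \cup T')$ in $\calR_{\lambda}$. Since $L' \cup T'$ is admissible (it contains all blue projective edges of $T'$) while $L$ is a closed bichrome graph, Proposition \ref{P:FF'} yields
\[
 F'_{\lambda}(L \cup L' \cup T') = F_{\lambda}(L)\, F'_{\lambda}(L' \cup T').
\]

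Third, I would plug this into the definition of $\Hm$ and split the normalizing prefactor as $\calD^{-1-\ell-\ell'} = \calD^{-\ell}\cdot \calD^{-1-\ell'}$ and $\delta^{-\sigma(L)-\sigma(L')} = \delta^{-\sigma(L)} \cdot \delta^{-\sigma(L')}$, obtaining
\begin{align*}
 \Hm(M \# M', T')
 &= \calD^{-1-\ell-\ell'}\, \delta^{-\sigma(L)-\sigma(L')} F_{\lambda}(L)\, F'_{\lambda}(L'\cup T') \\
 &= \bigl[\calD^{-\ell}\, \delta^{-\sigma(L)} F_{\lambda}(L)\bigr] \cdot \bigl[\calD^{-1-\ell'}\, \delta^{-\sigma(L')} F'_{\lambda}(L'\cup T')\bigr].
\end{align*}
The second bracket is by definition $\Hm(M',T')$, and it remains to identify the first bracket with the standard Hennings invariant $\HKR(M)$.

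The main (and only real) obstacle is this last identification: one must check that the normalization $\calD^{-\ell}\, \delta^{-\sigma(L)}\, F_{\lambda}(L)$ coincides with the version of the Hennings invariant used in the paper, in particular that the normalization is chosen so that $\HKR(S^3) = 1$. This is essentially a bookkeeping check using the definition of $\calD$ and $\delta$ (and the fact that $F_{\lambda}$ restricted to red links with all components colored by $H$ agrees with the Hennings-Kauffman-Radford evaluation); once confirmed, the proposition follows immediately from the displayed computation.
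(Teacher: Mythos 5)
Your proposal is correct and follows essentially the same route as the paper, whose proof is precisely the one-line observation that Proposition \ref{P:FF'} applied to a disjoint-union surgery presentation of $M \# M'$ gives the result. Your final bookkeeping step, identifying $\calD^{-\ell}\,\delta^{-\sigma(L)}F_{\lambda}(L)$ with $\HKR(M)$ (normalized so that $\HKR(S^3)=1$), is exactly the implicit content of the paper's argument and poses no difficulty.
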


\begin{proof}
 It is enough to apply Proposition \ref{P:FF'} to a surgery presentation for
 $M \# M'$ which is a disjoint union of two surgery presentations for $M$ and $M'$.
\end{proof}

\subsection{Proof of the invariance} \label{SS:PropForProof} 

We conclude Section \ref{S:3-manifold_invariants} with two results which were announced in the proof of Theorem \ref{T:3ManInvMainT}.

\begin{proposition}\label{P:orient} 
 Let $T$ be a morphism of $[n]\calR_{\lambda}$, let $K$ be a closed
 red component of $T$ disjoint from coupons and let $T'$ be the morphism of $[n]\calR_{\lambda}$ obtained by
 reversing the orientation of $K$. Then
 \[
  F_{\lambda}(T) = F_{\lambda}(T').
 \]
 Similarly, if $T$ is closed and admissible then $F'_{\lambda}(T) = F'_{\lambda}(T')$.
\end{proposition}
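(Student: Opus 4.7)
The plan is to express $F_\lambda(T)$ and $F_\lambda(T')$ via the bead formalism from the proof of Proposition \ref{HRT_functor_proposition}, isolate the contribution of $K$ as a scalar factor, and reduce the claim to a single algebraic identity on $H$.

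First, following the construction in the proof of Proposition \ref{HRT_functor_proposition}, I pick a point $p \in K$ and an opening path $\gamma$, and perform $1$-surgery along $\gamma$ (together with analogous surgeries for the remaining red cycles of $T$) to obtain an open morphism in $[n+k]\calR_\calC$. Applying $F_\calC$ through the bead functor $B_\calC$ and then closing with $\int_\calC$ recovers $F_\lambda(T)$. The same bead analysis as in Figure \ref{independence_of_path_2} shows that the contribution of $K$ factors out as a scalar $\lambda(g \cdot x_K)$, where $x_K \in H$ is the ordered product of beads deposited along $K$ from its crossings with the rest of $T$, and $g$ is the pivotal bead at the surgery site. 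The hypothesis that $K$ is disjoint from coupons guarantees that $x_K$ lies in $H$ rather than in a larger space of linear maps.

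Second, I repeat the construction for $T'$. The bead rules of Figure \ref{beads_functor}, together with the convention that reversing the orientation of an edge at a crossing (or at a red cap or cup) replaces the corresponding bead by its image under $S$, imply that the product of beads along $K$ transforms as $x_K \mapsto S(x_K)$ (the reversed traversal order gets absorbed into the antipode, using $S(ab) = S(b)S(a)$), while the pivotal bead at the surgery site transforms by $g \mapsto S(g) = g^{-1}$. Hence the contribution of $K$ to $F_\lambda(T')$ becomes $\lambda(g^{-1} \cdot S(x_K))$, with no other change in the evaluation.

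Third, it remains to verify the identity $\lambda(g \cdot x) = \lambda(g^{-1} \cdot S(x))$ for all $x \in H$. The quantum character property $\lambda(ab) = \lambda(S^2(b) a)$, combined with $S^2(g^{-1}) = g^{-1}$ (a consequence of $S^2$ being conjugation by $g$), yields $\lambda(g^{-1} \cdot S(x)) = \lambda(S(x) \cdot g^{-1})$. The remaining equality $\lambda(g \cdot x) = \lambda(S(x) \cdot g^{-1})$ is precisely the algebraic content of orientation invariance of the Hennings pairing $\lambda(g \cdot -)$, and is a consequence of unimodularity of $H$ combined with the ribbon compatibility $S(g) = g^{-1}$, analogous to the orientation-invariance arguments in the Kauffman-Radford formulation \cite{KR95}. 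For the admissible closed case: pick a cutting presentation $T_V$ of $T$ with $V$ projective and with the cut on a blue edge disjoint from $K$; then $T'_V$, obtained from $T_V$ by reversing $K$'s orientation, is a cutting presentation of $T'$, and the first part gives $F_\lambda(T_V) = F_\lambda(T'_V)$, whence $F'_\lambda(T) = \rmt_V(F_\lambda(T_V)) = \rmt_V(F_\lambda(T'_V)) = F'_\lambda(T')$.

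The main obstacle is the identity in the third step. Unlike the independence-of-path result in Proposition \ref{HRT_functor_proposition}, which uses only that $\lambda \circ R_g^*$ is a character, orientation invariance brings the antipode $S$ itself into play (not just $S^2$, as in the quantum character property), and the compatibility of $\lambda$ with $S$ is the delicate point. The argument must carefully orchestrate unimodularity, the quantum character property, and the ribbon structure so that the transformation of the pivotal bead at the surgery site is exactly compensated by the antipode on the bead product.
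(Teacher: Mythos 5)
Your overall route is the paper's: open $K$ through the bead functor, factor out its contribution as a scalar of the form $\lambda(\,\cdot\,)$ applied to a product of beads, and reduce orientation reversal to one algebraic identity for $\lambda$; your treatment of the renormalized case via cutting presentations is also fine. The problem is the identity you reduce to, and the bead bookkeeping that produces it. You claim the reversed component contributes $\lambda(g^{-1}S(x_K))$, so that one needs $\lambda(gx)=\lambda(g^{-1}S(x))$ for all $x\in H$. In fact the correct contribution is $\lambda(S(x_K)g)=\lambda(gS(x_K))$, with the external $g$ unchanged: that $g$ comes from the fixed standard local model at the opening/surgery site (Figure \ref{independence_of_path_2}), which is the same template for either orientation of $K$; it is not a bead carried by $K$ and it is not replaced by $S(g)$. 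What does change when you reverse $K$ is that its right caps and cups become left ones and vice versa (beads $g^{\pm 1}$ appear and disappear), and this is exactly what is absorbed into the replacement $x_K\mapsto S(x_K)$; extending the antipode rule for crossings to "the pivotal bead at the surgery site" is where your bookkeeping goes wrong. The identity actually needed is $\lambda(S(x)g)=\lambda(xg)$, the $S$-invariance of the symmetrised integral $\lambda(\cdot\,g)$, which is a genuine extra input on unimodular ribbon Hopf algebras (not a consequence of the quantum character property and $S(g)=g^{-1}$ alone) and is precisely what the paper quotes from Proposition 4.2 of \cite{H96}.

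Moreover your identity is not just different, it is false in general, so the gap cannot be patched by "orchestrating unimodularity and the ribbon structure". If both $\lambda(gS(x))=\lambda(gx)$ (the true statement) and your $\lambda(g^{-1}S(x))=\lambda(gx)$ held for all $x$, then $\lambda\bigl((g-g^{-1})S(x)\bigr)=0$ for all $x$; since $S$ is bijective and $\varphi(x)(h)=\lambda(hS(x))$ with the Radford map $\varphi:H\to H^*$ an isomorphism, this forces $g=g^{-1}$, i.e. $g^{2}=1$. This fails for the paper's main examples: for $\bar{U}_q\sl_2$ one has $g=K^{\bar r+1}$ and $g^{2}=K^{2}\neq 1$, and for the ribbon extension $D$ one has $g=k^{2p+2}$ and $g^{2}=k^{4}\neq 1$. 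So under your bookkeeping the proposition itself would be false, which pinpoints the error to the claimed transformation $g\mapsto g^{-1}$ of the closure bead. Once the reversed contribution is corrected to $\lambda(S(x_K)g)$ and the equality $\lambda(S(x)g)=\lambda(xg)$ is invoked (or proved) for unimodular ribbon $H$, your argument coincides with the paper's proof.
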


\begin{proof}
  The proof follows from the analogous result for the Hennings invariant, and is very similar to the proof of Propositions
  \ref{HRT_functor_proposition}. Indeed, we can
  first choose a diagram for $T$ presenting $K$ as the closure of a braid. 
  Then we can compute $F_{\lambda}(T)$ and $F_{\lambda}(T')$ by passing
  through the bead functor $B_{\calC}$.
  The contribution of $K$ to $F_{\lambda}(T)$ is computed by picking a base point on
  $K$, by collecting all the beads we meet whilst travelling along $K$
  according to its orientation in order to obtain an element $x_K$ of
  $H$ and by evaluating the integral $\lambda$ against $x_Kg$.
  Therefore if $T \smallsetminus (K \cup \underline{C})$ denotes the morphism of $\calB_{\Vect_{\Bbbk}}$ obtained from $B_{\calC}(T)$ 
  by removing all components corresponding to cycles $K, C_1, \ldots, C_k$ we get
  \[
   F_{\lambda}(T) = \lambda(x_Kg) \left( \prod_{i=1}^k \lambda(x_{C_i}g) \right)
   \cdot F_{\Vect_{\Bbbk}}(T \smallsetminus (K \cup \underline{C})).
  \]
  Now $F_{\lambda}(T')$ is obtained from $F_{\lambda}(T)$ by applying $S$ to all the beads we meet along $K$
  and by reversing the order of the multiplications. Then, since the morphism
  $T' \smallsetminus (\overline{K} \cup \underline{C})$ of $\calB_{\Vect_{\Bbbk}}$ obtained from $B_{\calC}(T')$ 
  by removing all components corresponding to cycles $\overline{K}, C_1, \ldots, C_k$ coincides with $T \smallsetminus (K \cup \underline{C})$,
  we get 
  \begin{align*}
   F_{\lambda}(T') &= \lambda(S(x_{K})g) \left( \prod_{i=1}^k \lambda(x_{C_i}g) \right)
   \cdot F_{\Vect_{\Bbbk}}(T' \smallsetminus (\overline{K} \cup \underline{C})) \\
   &= \lambda(x_{K}g) \left( \prod_{i=1}^k \lambda(x_{C_i}g) \right) \cdot F_{\Vect_{\Bbbk}}(T \smallsetminus (K \cup \underline{C})) 
  \end{align*}
  where the last equality follows from Proposition 4.2 of \cite{H96}.
  The property for $F'_{\lambda}$ follows now from the property for $F_{\lambda}$ applied to cutting presentations.
\end{proof} 

\begin{proposition}\label{P:slide}
 Let $T$ be a morphism of $[n]\calR_{\lambda}$, let $K$ be a closed
 red component of $T$ disjoint from coupons and let $e$ be an edge of
 $T$. Let $T'$ be the morphism of $[n]\calR_{\lambda}$ obtained by
 sliding $e$ over $K$.  Then
 \[
  F_{\lambda}(T) = F_{\lambda}(T').
 \]
 Similarly, if $T$ is closed and admissible then  $F'_{\lambda}(T) = F'_{\lambda}(T')$.
\end{proposition}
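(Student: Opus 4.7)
The plan is to reduce the statement to its $F_\lambda$-version and then to prove that version by opening $K$ into a strand and invoking the handle slide identity underlying Hennings' original construction. The reduction is exactly as in the proof of Proposition \ref{P:orient}: after an isotopy placing the slide of $e$ over $K$ away from the cutting arc, any cutting presentation $T_V$ of $T$ produces a cutting presentation $T'_V$ of $T'$, so $F'_\lambda(T)=\rmt_V(F_\lambda(T_V))$ and the analogous expression for $T'$ coincide as soon as we know $F_\lambda(T_V)=F_\lambda(T'_V)$.

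To prove the $F_\lambda$-version I mimic the strategy already used in the proofs of Propositions \ref{HRT_functor_proposition} and \ref{P:orient}. After an isotopy of the diagram I can assume $K$ is the braid closure of a pure sub-braid, and that the slide of $e$ over $K$ is localized in a small neighbourhood of one strand of $K$. Opening $K$ along a short surgery arc $\gamma$ and every other cycle of $T$ along pairwise disjoint surgery arcs yields a morphism $\tilde T\in[n+k]\calR_\calC$ with $T=\int_\calR^{\,k}\tilde T$. Choosing $\gamma$ disjoint from the slide neighbourhood, the analogous opening of $T'$ produces $\tilde T'\in[n+k]\calR_\calC$ with $T'=\int_\calR^{\,k}\tilde T'$, so by the definition of $F_\lambda$ via $F_\calC$ and $\int_\calC$ it suffices to prove $F_\calC(\tilde T)=F_\calC(\tilde T')$.

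The latter equality follows from a local Hopf-algebraic identity which is essentially the handle slide lemma of \cite{H96} in the ribbon setting. Passing through the bead functor $B_\calC:[n+k]\calR_\calC\to\calB_{\Vect_{\Bbbk}}$, the contribution of the opened $K$-strand to $F_\calC(\tilde T)$ is encoded by an element $x_K\in H$ built from the beads collected along $K$, decorated at the base point by the pivotal bead $g$. The slide of $e$ over $K$ inserts two additional $R$-matrix crossings, which after bead computation duplicate the beads along $e$ via $\Delta$ onto a parallel red companion of $e$ running along $K$. Combining the quasitriangular relation $R\,\Delta(x)=\Delta^{\mathrm{op}}(x)\,R$, the right integral identity $\lambda(x_{(1)})\,x_{(2)}=\lambda(x)\cdot 1_H$ and the unimodular trace property $\lambda(xy)=\lambda(S^2(y)x)$, the absorbed companion collapses inside $\lambda$ and one recovers $F_\calC(\tilde T)$.

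The main obstacle I expect is the case analysis for the colour and orientation of the edge $e$ being slid: red or blue, upward or downward, possibly terminating at a coupon. Each case reduces to the same local identity after using $\Delta(g)=g\otimes g$ and $\epsilon(g)=1$ to propagate the pivotal bead past the new crossings, and after substituting $x\mapsto S^{\pm 1}(x)$ when the orientation of $e$ is reversed; checking uniformly that all orientations, antipodes and pivotal factors match is the genuinely tedious part of the argument.
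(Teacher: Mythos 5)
There is a genuine gap in your reduction step. After opening $K$ (and the other cycles) you claim that it suffices to prove $F_{\calC}(\tilde T)=F_{\calC}(\tilde T')$, i.e.\ an equality of the \emph{open} graphs before any integral is applied. This equality is false in general: sliding an edge over an open red strand is not an identity in $[n+k]\calC$. Concretely, in bead notation the $K$-strand of $\tilde T$ carries (the left multiplication by) an element $x$, while in $\tilde T'$ the beads are split by the coproduct, the strand carrying $x_{(1)}$-type beads and the parallel companion of $e$ carrying $S(x_{(2)})$-type beads together with $g^{-1}$; as elements of $H\otimes H$ one has $x_{(1)}\otimes S(x_{(2)})\neq x\otimes 1_H$ in general. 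The handle-slide identity is precisely the statement that this discrepancy is annihilated by the right integral, via $\lambda(x_{(1)}g)\cdot S(x_{(2)}g)=\lambda(xg)\cdot 1_H$ (using $\Delta(g)=g\otimes g$ and $\lambda(y_{(1)})\,y_{(2)}=\lambda(y)\cdot 1_H$). Your own third paragraph betrays the problem: the ``proof'' of the claimed $F_{\calC}$-level equality invokes $\lambda(x_{(1)})x_{(2)}=\lambda(x)\cdot 1_H$, but no $\lambda$ is present in $F_{\calC}$ of the opened graph --- the integral on the $K$-strand is exactly what you removed by opening $K$.

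The repair is to \emph{not} open $K$ (or, equivalently, to reduce only to the equality after $\int_{\calC}$ has been applied to the $K$-strand), which is what the paper does: compute $F_{\lambda}(T)$ and $F_{\lambda}(T')$ through the bead functor $B_{\calC}$, where the closed cycle $K$ contributes the scalar $\lambda(x_Kg)$; after the slide the contribution becomes $\lambda(x_{K(1)}g)$ while $e'$ acquires beads $S(x_{K(2)})$ and $g^{-1}$, and the identity $\lambda(x_{K(1)}g)\cdot S(x_{K(2)}g)=\lambda(x_Kg)\cdot 1_H$ concludes. Your Hopf-algebraic core (coproduct duplication of beads, quasitriangularity for the bookkeeping, $\Delta(g)=g\otimes g$, the defining property of $\lambda$) is the right one, and your reduction of the $F'_{\lambda}$-statement to the $F_{\lambda}$-statement via cutting presentations matches the paper; but as written the intermediate claim $F_{\calC}(\tilde T)=F_{\calC}(\tilde T')$ is not just unproved, it is false, so the argument needs to be restructured around the closed component $K$.
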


\begin{proof}
  Up to isotopy the slide of $e$ over $K$ is the
  operation which transforms the diagram represented in the left-hand side of Figure
  \ref{slide_1} into the right-hand one.
  
 \begin{figure}[ht]
  \centering
  \includegraphics{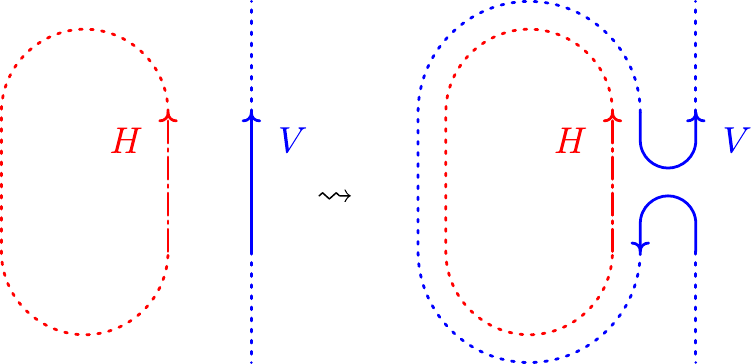}
  \caption{Slide of a $V$-colored blue edge $e$ over an $H$-colored red component $K$. 
  The edge $e$ is also allowed to be red, in which case $V = H$.}
  \label{slide_1}
 \end{figure}

 If we choose a diagram for $T'$ presenting $K$ as the closure of a braid, regions around the edge $e'$ resulting from the slide of $e$ over $K$ will
 locally look like Figure \ref{slide_2}, possibly up to replacing $V'$-colored blue strands with $H$-colored red strands.

 \begin{figure}[ht]
  \centering
  \includegraphics{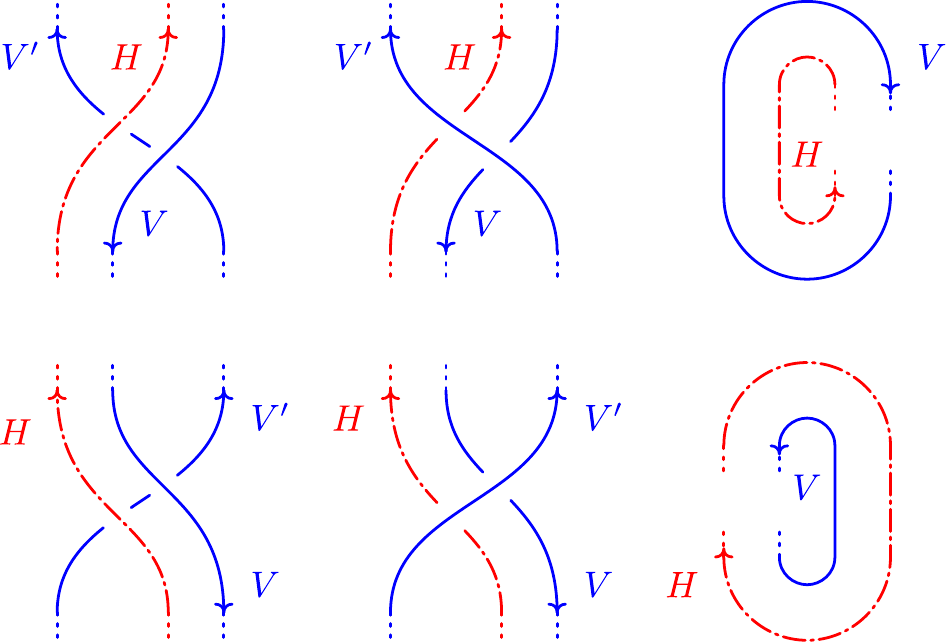}
  \caption{Local appearance of $T'$ around the edge $e'$.}
  \label{slide_2}
 \end{figure}

 Just like in the proof of Proposition \ref{HRT_functor_proposition},
 we can compute $F_{\lambda}(T)$ and $F_{\lambda}(T')$ by passing
 through $\calB_{\Vect_{\Bbbk}}$. As we showed in the proof of
 Proposition \ref{HRT_functor_proposition}, we can compute the
 contribution of $K$ to $F_{\lambda}(T)$ by picking a base point on
 $K$, by collecting all the beads we meet whilst travelling along $K$
 according to its orientation in order to obtain an element $x_K$ of
 $H$, and by evaluating the integral $\lambda$ against $x_Kg$.
 Therefore if $T \smallsetminus (K \cup \underline{C})$ denotes the
 morphism of $\calB_{\Vect_{\Bbbk}}$ obtained from $B_{\calC}(T)$ 
 by removing all components corresponding to cycles
 $K, C_1, \ldots, C_k$ then we get
 \[
  F_{\lambda}(T) = \lambda(x_Kg) \left( \prod_{i=1}^k \lambda(x_{C_i}g) \right)
  \cdot F_{\Vect_{\Bbbk}}(T \smallsetminus (K \cup \underline{C})).
 \]
 Let us see how the slide of $e$ over $K$ affects this computation.
 If we follow $e'$ we collect an element $S(x_{K(2)})$
 and then we meet a bead labeled with $g^{-1} = S(g)$. At the same time the contribution of $K$ to
 $F_{\lambda}(T')$ has changed to $\lambda(x_{K(1)}g)$.
 All of this follows again from the analysis of the beads associated with local models
 coming from Figure \ref{slide_2}, as summarized in Figure \ref{slide_3}.

 \begin{figure}[hb]
  \centering
  \includegraphics{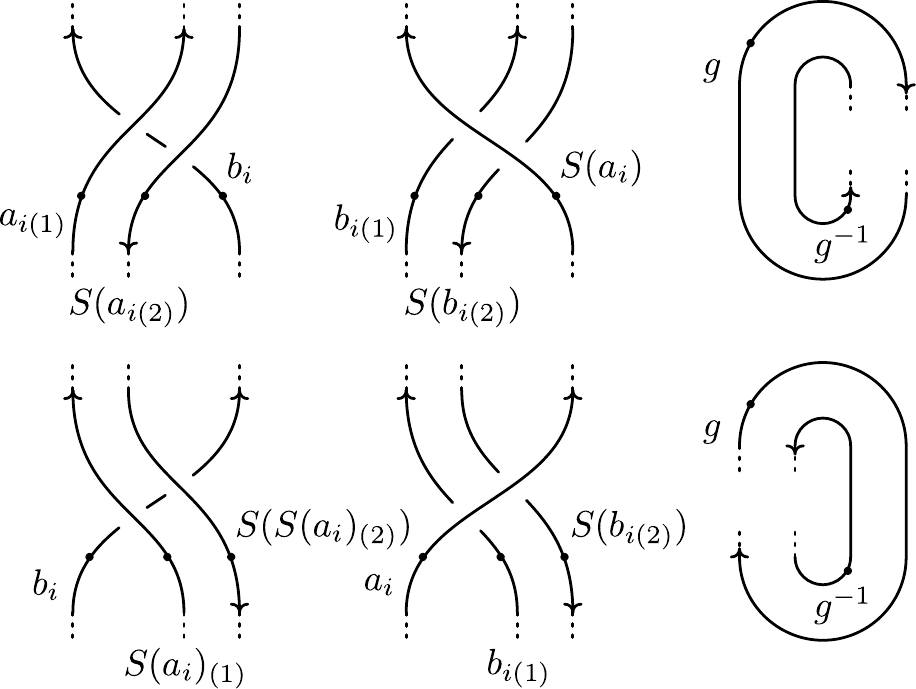}
  \caption{Beads around $e'$.}
  \label{slide_3}
 \end{figure}

 Therefore if $T' \smallsetminus (K \cup \underline{C})$ denotes the morphism of
 $\calB_{\Vect_{\Bbbk}}$ obtained from $B_{\calC}(T')$ 
 by removing all components corresponding to cycles $K, C_1, \ldots, C_k$ we get
 \begin{align*}
  F_{\lambda}(T') &= \lambda(x_{K(1)}g) \left( \prod_{i=1}^k \lambda(x_{C_i}g) \right)
  \cdot F_{\Vect_{\Bbbk}}(T' \smallsetminus (K \cup \underline{C})) \\
  &= \lambda(x_{K}g) \left( \prod_{i=1}^k \lambda(x_{C_i}g) \right) \cdot F_{\Vect_{\Bbbk}}(T \smallsetminus (K \cup \underline{C})), 
 \end{align*}
 where the last equality follows from the fact that
 \[
  \lambda(x_{K(1)}g) \cdot S(x_{K(2)}g) = \lambda(x_{K}g) \cdot S(1_H) = \lambda(x_{K}g) \cdot 1_H
 \]
 because $F_{\Vect_{\Bbbk}}(T' \smallsetminus (K \cup \underline{C}))$ carries an $S(x_{K(2)}g)$-labeled bead.
 The property for $F'_{\lambda}$ follows now from the property for $F_{\lambda}$ applied to cutting presentations.
\end{proof}

\FloatBarrier

\section{2+1-TQFTs from factorizable Hopf algebras}\label{S:TQFT}

In this section we extend the renormalized Hennings invariants of Section \ref{S:3-manifold_invariants} 
to $2+1$-TQFTs in the case of finite-dimensional factorizable ribbon Hopf algebras. We also give an explicit characterization
of the resulting TQFT vector spaces.

\subsection{Algebraic TQFT spaces}\label{pairing_section}

We start with the definition of a family of vector spaces which will be later identified with the family of TQFT vector spaces
coming from the functorial extension of the invariant $\Hm$. In order to do this, we first need some preliminary work.
Let us fix for this whole section a finite-dimensional factorizable ribbon Hopf algebra $H$. 
We recall that $H$ being \textit{factorizable} means that the Drinfeld map 
\[
 \begin{array}{rccc}
  \psi : & H^* & \rightarrow & H \\
  & f & \mapsto & \displaystyle \sum_{i,j = 1}^r f(b_ja_i) \cdot a_jb_i
 \end{array}
\]
is an isomorphism. Then $H$ is automatically unimodular, see for example \cite{R11}.  
Moreover $H$ is also non-degenerate, as proven in Proposition 7.1 of \cite{H96}.
In particular, we have a renormalized Hennings invariant $\Hm$ associated with the category $\calC = H$-$\mod$.
We will denote with $\X$ the \textit{dual coadjoint representation of $H$}, which is the left $H$-module structure on $H$ itself determined by the action $\rho_X : H \to \End_\kk(H)$ given by
\[
 \rho_X(h)(x) = h_{(2)}xS^{-1}(h_{(1)})
\]
for all $h,x \in H$. The dual of $\X$ is the coend for the functor mapping every pair $(V,V')$ of objects of $\calC$ to $V^* \otimes V'$, see \cite{K96,L95b,V06}.

\begin{lemma}\label{L:Drinfeld_and_Radford_for_X}
 The Radford map $\varphi$ and the Drinfeld map $\psi$ induce isomorphisms 
 \[
  \varphi_X := \varphi \circ S \in \Hom_{\calC}(X,X^*), \quad \psi_X := S^{-1} \circ \psi \in \Hom_{\calC}(X^*,X)
 \]
 satisfying $\bp{\psi_X \circ \varphi_X}^* = \varphi_X \circ \psi_X$.
\end{lemma}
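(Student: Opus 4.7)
The plan is to prove the lemma's three claims in order: $\varphi_X\in\Hom_\calC(X,X^*)$, $\psi_X\in\Hom_\calC(X^*,X)$ together with invertibility of both, and the duality identity.

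For $\varphi_X$, direct substitution gives $\varphi_X(x)(y)=\lambda(S^2(x)y)$. To verify $H$-equivariance, I would expand both $\varphi_X(\rho_X(h)(x))(y)$ and $\rho_{X^*}(h)(\varphi_X(x))(y)$ — the latter via the coadjoint action $\rho_{X^*}(h)(f)(y)=f(S(h_{(1)})yh_{(2)})$. Since $S$ is an antihomomorphism, $S^2(h_{(2)}xS^{-1}(h_{(1)}))=S^2(h_{(2)})S^2(x)S(h_{(1)})$, so the first becomes $\lambda(S^2(h_{(2)})S^2(x)S(h_{(1)})y)$ and the second becomes $\lambda(S^2(x)S(h_{(1)})yh_{(2)})$; a single application of the quantum character identity $\lambda(ab)=\lambda(S^2(b)a)$, available because $H$ is unimodular, matches them.

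For $\psi_X$ I would use the classical consequence of quasitriangularity that the Drinfeld map intertwines the coadjoint action on $H^*$ with the adjoint action on $H$, namely $\psi(\rho_{X^*}(h)(f))=h_{(1)}\psi(f)S(h_{(2)})$. Applying $S^{-1}$ and using that it is an antihomomorphism rewrites the right-hand side as $h_{(2)}S^{-1}(\psi(f))S^{-1}(h_{(1)})=\rho_X(h)(\psi_X(f))$, so $\psi_X\in\Hom_\calC(X^*,X)$. For bijectivity: the Radford map $\varphi$ is always a linear isomorphism for a finite-dimensional Hopf algebra, the Drinfeld map $\psi$ is one precisely under the factorizability hypothesis, and $S^{\pm1}$ is invertible since $H$ is finite-dimensional; hence both composites are isomorphisms in $\calC$.

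For the duality identity, I would evaluate both sides on arbitrary $f\in X^*$ and $x\in H$. Writing $R=\sum_i a_i\otimes b_i$, the left side unpacks as $\sum_{i,j}\lambda(S^2(x)b_ja_i)f(S^{-1}(a_jb_i))$ and the right side as $\sum_{i,j}f(b_ja_i)\lambda(S(a_jb_i)x)$. My plan is to reinterpret these as $(\lambda\otimes f)$ and $(f\otimes\lambda)$ applied to suitable transforms of the monodromy element $M=R_{21}R=\sum_{i,j}b_ja_i\otimes a_jb_i$, and to match them using $(S\otimes S)(R)=R$ together with the fact that $(S\otimes S)$ is an antihomomorphism of $H\otimes H$ (which gives $(S\otimes S)(M)=RR_{21}$), the quantum character property of $\lambda$, and careful reindexing of the $R$-matrix sums. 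This last step is the principal technical obstacle: each individual identity in play is standard, but stitching them together requires delicate bookkeeping of antipodes and $R$-matrix entries, and it is here that the full factorizable ribbon structure (as opposed to mere unimodularity) enters the argument.
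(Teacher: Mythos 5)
Your proposal is correct and follows essentially the same route as the paper: the paper checks equivariance of $\varphi_X$ by the same quantum-character computation, handles $\psi_X$ via the same intertwining property of the Drinfeld map (derived there in one line from the R-matrix), and proves the duality identity by evaluating both sides on $f$ and $x$ exactly in the unpacked form you give. The ``delicate bookkeeping'' you flag as the main obstacle is in fact a three-line computation --- apply $(S^{-1}\otimes S^{-1})(R)=R$ in both tensor factors of the right-hand sum, use the quantum character identity $\lambda(ab)=\lambda(S^2(b)a)$, and swap the summation indices $i\leftrightarrow j$ --- and it uses only quasitriangularity and unimodularity; factorizability enters solely through the invertibility of $\psi$, not in this last step.
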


\begin{proof}
 The fact that $\varphi_X$ and $\psi_X$ are invertible follows from the invertibility of $\varphi$, $\psi$ and $S$.
 The $H$-equivariance of $\varphi_X$ follows from the computation
 \begin{align*}
  \varphi_X(\rho_X(h)(x)) &= \lambda \circ L_{S^2(h_{(2)}xS^{-1}(h_{(1)}))} = \lambda \circ L_{S^2(h_{(2)})S^2(x)S(h_{(1)})} \\ 
  &= \lambda \circ L_{S^2(x)} \circ \rho_X(S(h)) = \rho_{X^*}(h)(\varphi_X(x))
 \end{align*}
 for every $x \in X$ and every $h \in H$, where the third equality follows from $\lambda$ being a quantum character. The $H$-equivariance of $\psi_X$ follows from the computation
 \begin{align*}
  \psi_X(\rho_{X^*}(h)(f)) &= \sum_{i,j=1}^r f(S(h_{(1)})b_ja_ih_{(2)}) \cdot S^{-1}(a_jb_i) \\
  &= \sum_{i,j=1}^r f(b_ja_i) \cdot S^{-1}(h_{(1)}a_jb_iS(h_{(2)})) \\
  &= \sum_{i,j=1}^r f(b_ja_i) \cdot h_{(2)}S^{-1}(a_jb_i)S^{-1}(h_{(1)}) = \rho_X(h)(\psi_X(f))
 \end{align*}
 for every $f \in X^*$ and every $h \in H$, where the second equality follows from the properties of the R-matrix using the pivotal structure of $\calC$.
 Finally, for every $f \in X^*$ and every $x \in X$, we have
 \begin{align*}
  (\varphi_X \circ \psi_X)(f)(x) &= \sum_{i,j=1}^r f(b_ja_i) \lambda (S(a_jb_i)x) = \sum_{i,j=1}^r f(S^{-1}(a_ib_j)) \lambda (b_ia_jx) \\
  &= \sum_{i,j=1}^r f(S^{-1}(a_ib_j)) \lambda (S^2(x)b_ia_j) = f((\psi_X \circ \varphi_X)(x)) \\
  &= (\psi_X \circ \varphi_X)^*(f)(x),
 \end{align*}
 where the second equality follows from the identity $(S \otimes S)(R) = R$.
\end{proof}

\begin{proposition}\label{P:HomG}
 For every $n \in \N$ and for all objects $V,V'$ of $\calC$ there exist explicit isomorphisms
 \begin{gather*}
  \Theta : \Hom_{\calC}(V,\X^{\otimes n} \otimes V') \rightarrow \Hom_{[n]\calC}([n]V,[n]V'), \\
  \Theta' : \Hom_{\calC}((\X^*)^{\otimes n} \otimes V,V') \rightarrow \Hom_{[n]\calC}([n]V,[n]V').
 \end{gather*}
\end{proposition}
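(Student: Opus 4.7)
The plan is to define $\Theta$ by an explicit formula and verify it is an isomorphism by exhibiting an inverse; $\Theta'$ will then be obtained from $\Theta$ via categorical duality. I would start by decomposing a linear map $\phi : V \to H^{\otimes n} \otimes V'$ as $\phi = \sum_j \underline{x_j} \otimes f_j(\cdot)$ with $\underline{x_j} \in H^{\otimes n}$ and $f_j : V \to V'$ linear (for instance by fixing a basis of $H^{\otimes n}$), and set
\[
\Theta(\phi)(\underline{h} \otimes v) := \sum_j (\underline{x_j}\,\underline{h}) \otimes f_j(v) = \sum_j L_{\underline{x_j}}(\underline{h}) \otimes f_j(v),
\]
so that $\Theta(\phi) = \sum_j L_{\underline{x_j}} \otimes f_j$ is manifestly of the form required for a morphism in $[n]\calC$. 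The candidate inverse is the evaluation-at-the-unit map $F \mapsto \hat{F}$ with $\hat{F}(v) := F(1^{\otimes n} \otimes v)$. Because an $F$ of the form $\sum_j L_{\underline{x_j}} \otimes f_j$ is determined by its value on $1^{\otimes n} \otimes v$, the two assignments are mutually inverse at the level of all linear maps $H^{\otimes n} \otimes V \to H^{\otimes n} \otimes V'$, so everything reduces to showing that $F$ is $H$-equivariant (for the diagonal action with each $H$-factor carrying the regular representation) if and only if $\hat{F}$ is $H$-equivariant as a map $V \to \X^{\otimes n} \otimes V'$.

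I would prove this equivariance translation first for $n = 1$. Since right multiplication on $H$ commutes with left multiplication, the $H$-equivariance of $F = \sum_j L_{x_j} \otimes f_j$ reduces, by evaluation at $h = 1$, to the identity
\[
\sum_j x_j k_{(1)} \otimes f_j(k_{(2)} v) = \sum_j k_{(1)} x_j \otimes k_{(2)} f_j(v) \qquad (\ast)
\]
for all $k \in H$ and $v \in V$, whereas the $H$-equivariance of $\hat{F}$ as a map $V \to \X \otimes V'$, using $\rho_X(k)(x) = k_{(2)} x S^{-1}(k_{(1)})$, reads
\[
\sum_j x_j \otimes f_j(kv) = \sum_j k_{(2)} x_j S^{-1}(k_{(1)}) \otimes k_{(3)} f_j(v). \qquad (\ast\ast)
\]
To deduce $(\ast)$ from $(\ast\ast)$, I would substitute $h = k_{(2)}$ in $(\ast\ast)$, multiply the first tensor factor on the right by $k_{(1)}$, and simplify using coassociativity together with the antipode identity $S^{-1}(a_{(2)}) a_{(1)} = \epsilon(a)$. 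The reverse implication is obtained by the symmetric manipulation. The general case then follows either by iterating the $n = 1$ argument one $H$-factor at a time, or by running the same Sweedler computation simultaneously on all $n$ factors.

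For $\Theta'$, the cleanest route is to pass through categorical duality. Since $\calC$ is rigid and pivotal, for any object $W$ there is a canonical isomorphism $\Hom_{\calC}(W \otimes V, V') \cong \Hom_{\calC}(V, W^* \otimes V')$ coming from the (co)evaluations of $W$. Taking $W = (\X^*)^{\otimes n}$ and using the canonical identification $((\X^*)^{\otimes n})^* \cong \X^{\otimes n}$ (valid in any pivotal category via $\X^{**} \cong \X$), this yields $\Hom_{\calC}((\X^*)^{\otimes n} \otimes V, V') \cong \Hom_{\calC}(V, \X^{\otimes n} \otimes V')$, and I would define $\Theta'$ as the composition of this duality isomorphism with $\Theta$. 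As an alternative (only needed for a direct formula), one may identify $\X$ with $\X^*$ using the Radford and Drinfeld maps from Lemma~\ref{L:Drinfeld_and_Radford_for_X}, which are isomorphisms thanks to factorizability of $H$.

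The main obstacle is the Sweedler-calculus verification $(\ast) \Leftrightarrow (\ast\ast)$; everything else is formal bookkeeping. The computation is elementary but requires careful tracking of indices and of the inverse antipode $S^{-1}$ appearing in the coadjoint action $\rho_X$.
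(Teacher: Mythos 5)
Your proof is correct, but it takes a genuinely different route from the paper. You build $\Theta$ by pure Hopf-algebra bookkeeping: right multiplication by the $H^{\otimes n}$-components of $\phi$, with evaluation at $1^{\otimes n}$ as inverse, and you translate the two equivariance conditions into each other by Sweedler calculus; your identities $(\ast)\Leftrightarrow(\ast\ast)$ do check out (the key inverse-antipode relations $S^{-1}(a_{(2)})a_{(1)}=\epsilon(a)\cdot 1_H$ and $a_{(2)}S^{-1}(a_{(1)})=\epsilon(a)\cdot 1_H$ are exactly what is needed, and the reduction to $\underline{h}=1^{\otimes n}$ is legitimate since right multiplications commute with the left regular action). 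You then get $\Theta'$ formally from pivotal duality $\Hom_{\calC}(W\otimes V,V')\cong\Hom_{\calC}(V,W^{*}\otimes V')$. The paper instead defines $\Theta$, $\Theta'$ and their inverses as $F_{\lambda}$-images of explicit bichrome graphs (Figures \ref{isomorphism_1}--\ref{isomorphism_4}), and the mutual-inverse property is delegated to the identities of Lemma \ref{L:important_morphisms}, such as $\int_{\calC}(\beta\circ\alpha)=\id_{\X}$ and $\int_{\calC}(\alpha'\circ\beta')=\id_{\X^*}$; this is deliberately \emph{not} the simplest choice, as the remark preceding the proposition stresses, because these particular graph-defined isomorphisms (involving $\alpha$, $\alpha'$ and their Hopf-link/Drinfeld-map variants $\ell_{\alpha}$, $\ell_{\alpha'}$) are exactly the ones used later, e.g.\ in Propositions \ref{P:translation_of_pairings}, \ref{surjectivity_proposition}, \ref{surjectivity_proposition'} and Corollary \ref{C:identification}. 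So your argument buys elementariness — no skein calculus, no Lemma \ref{L:important_morphisms}, and no use of factorizability beyond what duality in $\calC$ already gives — and it fully proves the statement as written; what it does not provide is the specific pair $(\Theta,\Theta')$ whose compatibility with the pairings $\brk{\cdot,\cdot}_{\calX}$ and $\brk{\cdot,\cdot}_{\calS}$ is exploited afterwards (your $\Theta'$, obtained through the pivotal duality rather than through the Drinfeld--Radford twist, need not coincide with the paper's), so as a drop-in replacement it would force one to re-verify those later compatibility statements for your choice of isomorphisms.
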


Before proving Proposition \ref{P:HomG}, we point out that
the explicit isomorphisms we will choose will not be the simplest
possible, but will instead be precisely the ones we will need in the
following for an efficient description of TQFT vector spaces.  
In particular, we will use the morphisms introduced in the following lemma.

\begin{lemma}\label{L:important_morphisms} 
 The linear maps
 \[
  \begin{array}{rccc}
   \alpha : & H \otimes X & \rightarrow & H \\
   & h \otimes x & \mapsto & xh,
  \end{array} \quad
  \begin{array}{rccc}
   \beta : & H & \rightarrow & H \otimes X \\ 
   & h & \mapsto & \Lambda_{(1)}h \otimes S^{-1}(\Lambda_{(2)})
  \end{array}
 \]
 define morphisms in $\Hom_{[1]\calC}([1]X,[1]\one)$ and in $\Hom_{[1]\calC}([1]\one,[1]X)$ satisfying
 \begin{gather*}
  \int_{\calC} \bp{\beta \circ \alpha} = \int_{\calC} \bp{\ell_{\beta} \circ \ell_{\alpha}} = \id_\X, \\
  \alpha \circ \left( \id_H \otimes \left( (\lambda \otimes \id_X) \circ \beta \circ L_h \circ \eta \right) \right) = 
  \ell_{\alpha} \circ \left( \id_H \otimes \left( (\lambda \otimes \id_X) \circ \ell_{\beta} \circ L_h \circ \eta \right) \right) = L_h
 \end{gather*}
 for every $h \in H$, where $\ell_{\alpha}$ and $\ell_{\beta}$ are the morphisms 
 \begin{align*}
  \ell_{\alpha} &:= \alpha \circ (\id_H \otimes (\psi_X \circ \varphi_X)) \in \Hom_{[1]\calC}([1]X,[1]\one), \\
  \ell_{\beta} &:= (\id_{H} \otimes\ (\psi_X \circ \varphi_X)^{-1}) \circ \beta \in \Hom_{[1]\calC}([1]\one,[1]X).
 \end{align*}
 Moreover, the morphisms
 \begin{align*}
  \alpha' &:= (\alpha \otimes \id_{X^*}) \circ (\id_H \otimes \lcoev_X) \in \Hom_{[1]\calC}([1]\one,[1]X^*), \\
  \beta' &:= (\id_H \otimes \rev_X) \circ (\beta \otimes \id_{X^*}) \in \Hom_{[1]\calC}([1]X^*,[1]\one), \\
  \ell_{\alpha'} &:= (\id_H \otimes \ (\varphi_X \circ \psi_X)) \circ \alpha' \in \Hom_{[1]\calC}([1]\one,[1]X^*), \\
  \ell_{\beta'} &:= \beta' \circ (\id_{H} \otimes \ (\varphi_X \circ \psi_X)^{-1}) \in \Hom_{[1]\calC}([1]X^*,[1]\one)
 \end{align*}
 satisfy
 \begin{gather*}
  \int_{\calC} \bp{\alpha' \circ \beta'} = \int_{\calC} \bp{\ell_{\alpha'} \circ \ell_{\beta'}} = \id_{\X^*}, \\
  \left( \id_H \otimes \left( \lambda \! \circ \! L_h \! \circ \beta' \! \circ (\eta \otimes \id_{X^*}) \right) \right) \circ \alpha' \! = 
  \left( \id_H \otimes \left( \lambda \! \circ \! L_h \! \circ \ell_{\beta'} \! \circ (\eta \otimes \id_{X^*}) \right) \right) \circ \ell_{\alpha'} \! = L_h
 \end{gather*}
 for every $h \in H$.
 Finally, the isomorphism $h_\X := \varphi_X \circ \psi_X \circ \varphi_X \in \Hom_\cat(\X,\X^*)$ satisfies
 \[
  h_\X = \int_{\calC} \bp{\ell_{\alpha'} \circ \alpha} = \int_{\calC} \bp{\alpha' \circ \ell_{\alpha}}.
 \]
\end{lemma}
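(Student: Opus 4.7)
My plan is to prove the lemma in five steps: (i) verify that the listed linear maps are morphisms of $[1]\calC$; (ii) establish $\int_{\calC}(\beta \circ \alpha) = \id_{\X}$ and the $L_h$-identity for $\alpha, \beta$ by direct Sweedler computation; (iii) derive the $\ell$-variants by commuting the $H$-module automorphism $\psi_X \circ \varphi_X$ past $\int_{\calC}$; (iv) dualize for the primed maps $\alpha', \beta'$; (v) compute $\int_{\calC}(\ell_{\alpha'} \circ \alpha)$ and identify it with $h_{\X}$. The key ingredients will be two Radford-type inversion formulas, the quantum character property of $\lambda$, and the pivotal identity $g\, x\, g^{-1} = S^2(x)$.

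For (i), I would expand in a basis $\{e_j\}$ of $H$ with dual basis $\{e^j\}$ and write $\alpha = \sum_j L_{e_j} \otimes e^j$ and $\beta = \sum_{(\Lambda)} L_{\Lambda_{(1)}} \otimes \eta_{S^{-1}(\Lambda_{(2)})}$, where $\eta_y \colon \one \to \X$ picks out $y$. The $H$-equivariance of $\alpha$ amounts to $h_{(3)} x S^{-1}(h_{(2)}) h_{(1)} = h x$, which follows by regrouping the triple coproduct via $(\Delta \otimes \id)\Delta$: the inner $S^{-1}(h_{(2)}) h_{(1)}$ then sits between two halves of a common $\Delta$ and collapses through the counit. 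The $H$-equivariance of $\beta$ reduces to the left cointegral identity $h\Lambda = \epsilon(h)\Lambda$ combined with unimodularity $S(\Lambda) = \Lambda$. The maps $\ell_\alpha, \ell_\beta, \alpha', \beta', \ell_{\alpha'}, \ell_{\beta'}$ are obtained from $\alpha, \beta$ by postcomposing with the $H$-equivariant morphisms $\psi_X \circ \varphi_X$, $\lcoev_{\X}$, $\rev_{\X}$, $(\varphi_X \circ \psi_X)^{\pm 1}$, so the same argument applies.

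For (ii), a direct computation gives $\int_{\calC}(\beta \circ \alpha)(x) = \lambda(\Lambda_{(1)} x) \cdot S^{-1}(\Lambda_{(2)})$. Combining the quantum character property $\lambda(\Lambda_{(1)} x) = \lambda(S^2(x) \Lambda_{(1)})$ with the Radford inversion formula $\lambda(y \Lambda_{(1)}) \Lambda_{(2)} = S^{-1}(y)$ (applied at $y = S^2(x)$) yields $\lambda(\Lambda_{(1)} x) \Lambda_{(2)} = S(x)$; pulling the linear operator $S^{-1}$ through the scalar then gives $\lambda(\Lambda_{(1)} x) S^{-1}(\Lambda_{(2)}) = S^{-1}(S(x)) = x$, as desired. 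The $L_h$-identity for $\alpha, \beta$ is the very same scalar computation performed with $x$ replaced by the image of $L_h \circ \eta$, after which $\alpha$ supplies the left multiplication by $h$.

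For (iii), since $(\psi_X \circ \varphi_X)^{\pm 1}$ is linear and acts only on the $\X$-factor, it commutes past the scalar produced by $\int_{\calC}$; the inserted pair in $\ell_\beta \circ \ell_\alpha$ then cancels once (ii) is invoked. For (iv), the primed identities are dual: unfolding $\alpha', \beta'$ in terms of $\lcoev_\X, \rev_\X$, they reduce to the computation $\lambda(h \Lambda_{(1)}) \cdot g S^{-1}(\Lambda_{(2)}) g^{-1} = h$, which follows from the pivotal identity $g(-)g^{-1} = S^2$ and the companion Radford formula $\lambda(h \Lambda_{(1)}) \Lambda_{(2)} = S^{-1}(h)$ via $\lambda(h \Lambda_{(1)}) S(\Lambda_{(2)}) = S(S^{-1}(h)) = h$. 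For step (v), unfolding $\ell_{\alpha'} \circ \alpha$ in the basis $\{e_j\}$ produces a morphism of the shape $\sum_j L_{e_j \cdot (-)} \otimes \bigl(e^j(-) \cdot (\varphi_X \circ \psi_X)(e^j)\bigr)$, and applying $\int_{\calC}$ collapses it via the dual-basis identity $\sum_j \lambda(e_j x) e^j = \varphi_X(x)$, which is precisely $\varphi_X = \varphi \circ S$ combined once more with the quantum character property; hence $\int_{\calC}(\ell_{\alpha'} \circ \alpha) = \varphi_X \circ \psi_X \circ \varphi_X = h_{\X}$, and the symmetric equality $\int_{\calC}(\alpha' \circ \ell_\alpha) = h_{\X}$ is obtained identically. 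The main obstacle throughout is the Sweedler bookkeeping: each identity mixes two different coproducts (one from the stabilized action and one from $\rho_X$) with antipode insertions, so one must track carefully which Sweedler indices are grouped by a common $\Delta$ and which are not; once this regrouping is performed cleanly, every verification reduces to one of the two Radford-type identities above.
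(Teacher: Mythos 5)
Your proposal is correct and follows essentially the same route as the paper's proof: direct equivariance checks for $\alpha$ and $\beta$, the quantum character property combined with the Radford inversion $\varphi^{-1}(f)=f(\Lambda_{(1)})\cdot\Lambda_{(2)}$ (your $\lambda(y\Lambda_{(1)})\cdot\Lambda_{(2)}=S^{-1}(y)$), conjugation by the automorphism $\psi_X\circ\varphi_X$ for the $\ell$-variants, and the identity $\int_{\calC}(\alpha'\circ\alpha)=\varphi_X$ (your dual-basis computation) for the $h_{\X}$ equalities. Where the paper only says the primed case is ``similar,'' your reduction to $\lambda(h\Lambda_{(1)})\cdot gS^{-1}(\Lambda_{(2)})g^{-1}=h$ is the right one, since the pivot acts on $\X$ by conjugation, and it checks out via $gS^{-1}(\cdot)g^{-1}=S$ exactly as you indicate.
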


\begin{figure}[htb]
 \centering
 \includegraphics{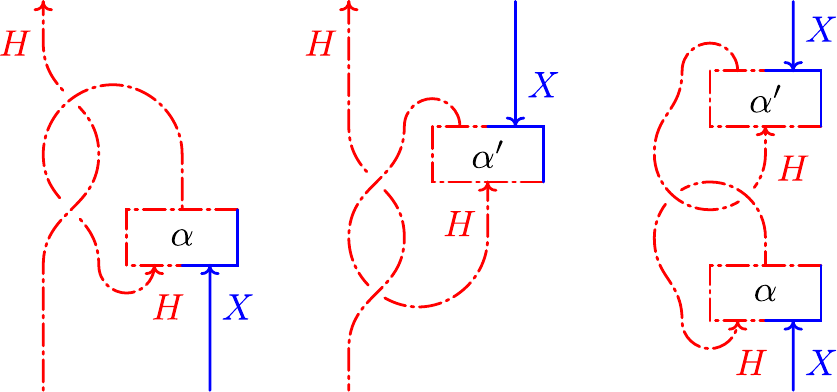}
 \caption{$\calC$-Colored bichrome graphs representing the morphisms $\ell_{\alpha}$, $\ell_{\alpha'}$ and $h_X$ respectively.}
 \label{F:long_Hopf_morphisms}
\end{figure}

\begin{proof}
 First, let us prove $\alpha \in \Hom_{[1]\calC}([1]\X,[1]\one)$. For every $h \in H$ we have
 \begin{align*}
  \alpha \circ (L_{h_{(1)}} \otimes \rho_X(h_{(2)})) &= m \circ \tau \circ (L_{h_{(1)}} \otimes \rho_X(h_{(2)})) \\
  &= m \circ (L_{h_{(3)}} \otimes L_{S^{-1}(h_{(2)})h_{(1)}}) \circ \tau \\
  &= m \circ (L_{h_{(3)}} \otimes L_{S^{-1}(S(h_{(1)})h_{(2)})}) \circ \tau \\
  &= \epsilon(h_{(1)}) \cdot \left( m \circ (L_{h_{(2)}} \otimes \id_H) \circ \tau \right) \\
  &= m \circ (L_h \otimes \id_H) \circ \tau = L_h \circ m \circ \tau = L_h \circ \alpha,
 \end{align*}
 where $\tau(h \otimes x) := x \otimes h$ for every $h \otimes x \in H \otimes X$ and where $m$ is the multiplication map. Furthermore, $\alpha$ can be written as $\sum_{i = 1}^k L_{x_i} \otimes f_i$ where $\{ x_1, \ldots, x_k \}$ is a basis of $X$ and $\{ f_1, \ldots, f_k \}$ is the corresponding dual basis of $X^*$.  
 Next, in order to prove $\beta \in \Hom_{[1]\calC}([1]\one,[1]\X)$, we will show that for every $h \in H$ we have the equality
 $(L_{h_{(1)}} \otimes \rho_X(h_{(2)}) ) \circ \beta \circ L_{S(h_{(3)})} = \epsilon(h) \cdot \beta$,
 which is equivalent to the $H$-equivariance of $\beta$ through the pivotal structure of $\calC$.
 Indeed, for every $h \in H$ the left-hand side of the equality is given by
 \begin{align*}
  L_{h_{(1)}\Lambda_{(1)}S(h_{(4)})} &\otimes (h_{(3)}S^{-1}(\Lambda_{(2)})S^{-1}(h_{(2)})) \\
  &= L_{h_{(1)}\Lambda_{(1)}S(h_{(4)})} \otimes S^{-1}(h_{(2)}\Lambda_{(2)}S(h_{(3)})) \\
  &= L_{(\id_H \otimes\ S^{-1})(\Delta(h_{(1)}\Lambda S(h_{(2)})))} \circ (\id_H \otimes\ \eta) \\
  &= \epsilon(h) \cdot L_{\Lambda_{(1)}} \otimes S^{-1}(\Lambda_{(2)})
 \end{align*}
 where the last equality follows from the fact that $h_{(1)}\Lambda S(h_{(2)}) = \epsilon(h) \cdot \Lambda$. 
 Now recall that the inverse $\varphi^{-1}$ of the Radford map $\varphi$ is given by $\varphi^{-1}(f) = f(\Lambda_{(1)}) \cdot \Lambda_{(2)}$ for every $f \in H^*$. Then for every $x \in \X$ we have
 \begin{align*}
  \int_{\calC} (\beta \circ \alpha) (x) &= \lambda(\Lambda_{(1)}x) \cdot S^{-1}(\Lambda_{(2)}) = \lambda(S^2(x) \Lambda_{(1)}) \cdot S^{-1}(\Lambda_{(2)}) \\
  &= S^{-1} \left( \varphi(S(x))(\Lambda_{(1)}) \cdot  \Lambda_{(2)} \right) = S^{-1} \left( \varphi^{-1}(\varphi(S(x))) \right)  = x.
 \end{align*}
 This means $\int_{\calC} (\beta \circ \alpha) = \id_\X$. Analogously, for all $h,h' \in H$ we have
 \begin{align*}
  \alpha \left( h' \otimes (\lambda \otimes \id_X)(\beta(h)) \right) &= \lambda(\Lambda_{(1)}h) \cdot \alpha(h' \otimes S^{-1}(\Lambda_{(2)})) \\
  &= \lambda(S^2(h) \Lambda_{(1)}) \cdot \alpha(h' \otimes S^{-1}(\Lambda_{(2)})) \\
  &= \alpha \left( h' \otimes S^{-1} \left( \varphi(S(h))(\Lambda_{(1)}) \cdot \Lambda_{(2)} \right) \right) \\
  &= \alpha \left( h' \otimes S^{-1} \left( \varphi^{-1}(\varphi(S(h))) \right) \right) = hh'.
 \end{align*}
 Thus $\alpha \circ \left( \id_H \otimes \left( (\lambda \otimes \id_X) \circ \beta \circ L_h \circ \eta \right) \right) = L_h$. 
 But now $\int_{\calC} (\ell_{\beta} \circ \ell_{\alpha})$ is given by
 \[
  (\psi_X \circ \varphi_X)^{-1} \circ \left( \int_{\calC} (\beta \circ \alpha) \right) \circ (\psi_X \circ \varphi_X) = \id_\X,
 \]
 and analogously $\ell_{\alpha} \circ \left( \id_H \otimes \left( (\lambda \otimes \id_X) \circ \ell_{\beta} \circ L_h \circ \eta \right) \right)$ is given by
 \[
   \alpha \circ \left( \id_H \otimes \left( (\psi_X \circ \varphi_X) \circ (\psi_X \circ \varphi_X)^{-1} \circ (\lambda \otimes \id_X) \circ \beta \circ L_h \circ \eta \right) \right)  = L_h.
 \]
 The proof of the corresponding equalities for $\alpha'$, $\beta'$, $\ell_{\alpha'}$, and $\ell_{\beta'}$ is similar. Now the equalities involving $h_X$ follow from the fact that 
 $\int_{\calC}(\alpha' \circ \alpha) = \varphi_X$, which in turn follows from the computation
 \[
  \int_{\calC}(\alpha' \circ \alpha)(x)(x') = \lambda(x'x) = \lambda(S^2(x)x') = \varphi_X(x)(x')
 \]
 for all $x,x' \in X$. Indeed, this means that
 \begin{align*}
  \int_{\calC}(\alpha' \circ \ell_{\alpha}) &= \int_{\calC}(\alpha' \circ \alpha) \circ (\psi_X \circ \varphi_X) = \varphi_X \circ \psi_X \circ \varphi_X = (\varphi_X \circ \psi_X) \circ \int_{\calC}(\alpha' \circ \alpha) \\ &= \int_{\calC}(\ell_{\alpha'} \circ \alpha).
 \end{align*}
 Finally, to see that $\ell_{\alpha}$ is the image under the Hennings-Reshetikhin-Turaev functor $F_{\lambda}$ of the first $\calC$-colored bichrome graph represented in Figure \ref{F:long_Hopf_morphisms}, remark that 
 for every $h \otimes x \in [1]X$ we have
 \[
  \ell_{\alpha}(h \otimes x) = \sum_{i,j = 1}^r \lambda(S^2(x)b_ja_i) \cdot S^{-1}(a_jb_i)h
  = \sum_{i,j = 1}^r \lambda(S(a_jb_i)x) \cdot b_ja_ih.
 \]
 An analogous computation shows that $\ell_{\alpha'}$ is the image
 under $F_{\lambda}$ of the second $\calC$-colored bichrome graph
 represented in Figure \ref{F:long_Hopf_morphisms}.
\end{proof}

\begin{proof}[Proof of Proposition \ref{P:HomG}.]
 We can define $\Theta$ as the map that sends every morphism $f$ of $\Hom_{\calC}(V,\X^{\otimes n} \otimes V')$ to the morphism $\Theta(f)$ of $\Hom_{[n]\calC}([n]V,[n]V')$ given by the image under $F_{\lambda}$ of the $\calC$-colored bichrome graph represented in Figure \ref{isomorphism_1}.

 \begin{figure}[ht]
  \centering
  \includegraphics{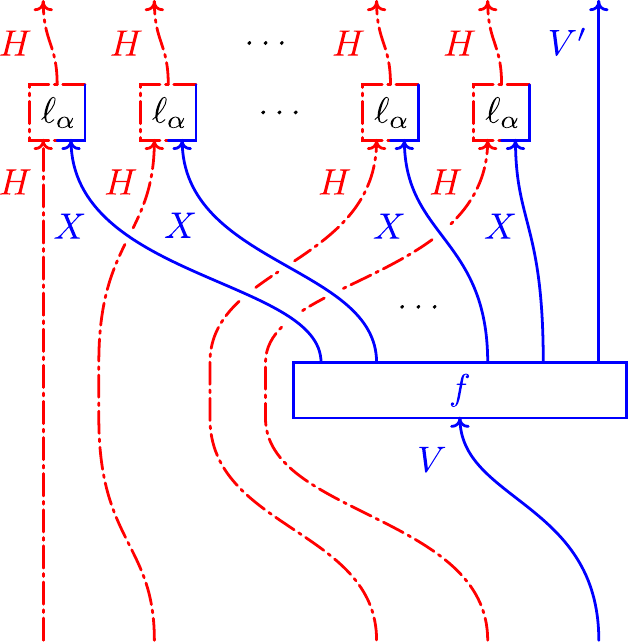}
  \caption{$\calC$-Colored $n$-string link graph representing the morphism $\Theta(f)$ of $\Hom_{[n]\calC}([n]V,[n]V')$.}
  \label{isomorphism_1}
 \end{figure}

 \begin{figure}[t]
  \centering
  \includegraphics{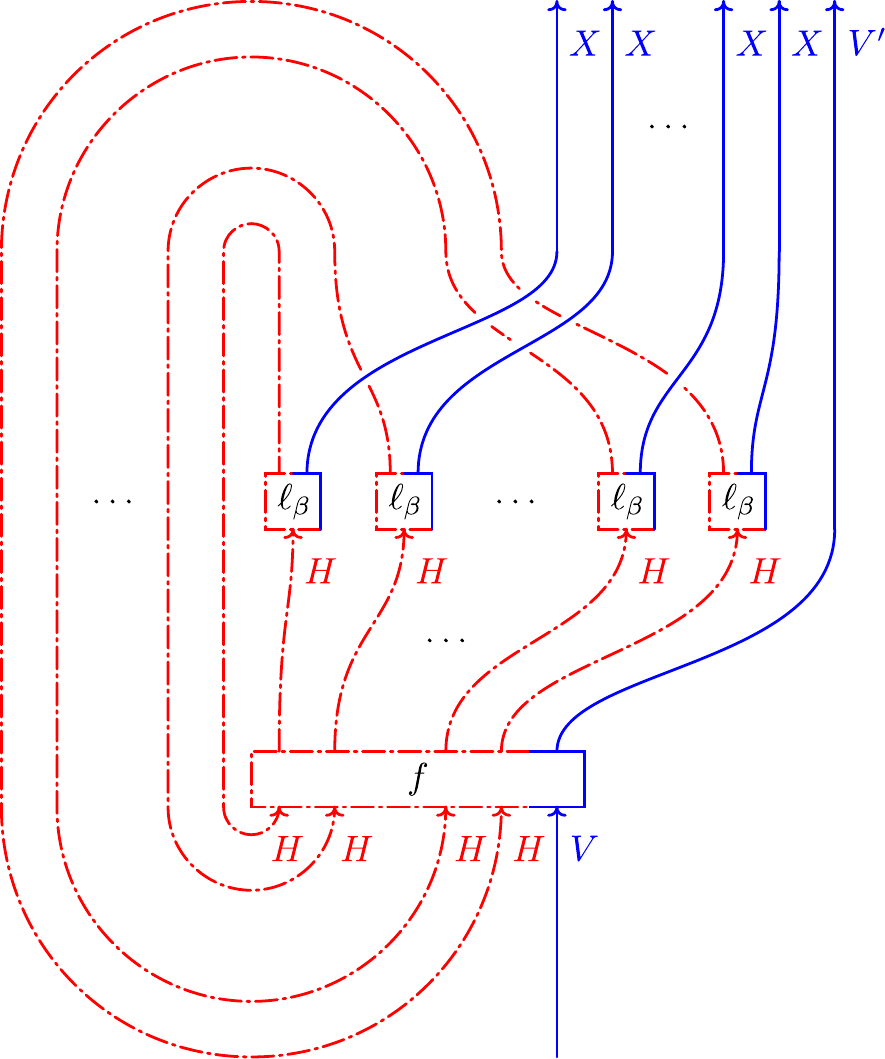}
  \caption{$\calC$-Colored bichrome graph representing the morphism $\Theta^{-1}(f)$ 
  of $\Hom_{\calC}(V,\X^{\otimes n} \otimes V')$.}
  \label{isomorphism_2}
 \end{figure}

 It is now relatively easy to see that $\Theta^{-1}$ is the map that sends every
 morphism $f$ of $\Hom_{[n]\calC}([n]V,[n]V')$ to the morphism
 $\Theta^{-1}(f)$ of $\Hom_{\calC}(V,\X^{\otimes n} \otimes V')$ given
 by the image under $F_{\lambda}$ of the $\calC$-colored bichrome
 graph represented in Figure \ref{isomorphism_2}.

 \begin{figure}[hb]
  \centering
  \includegraphics{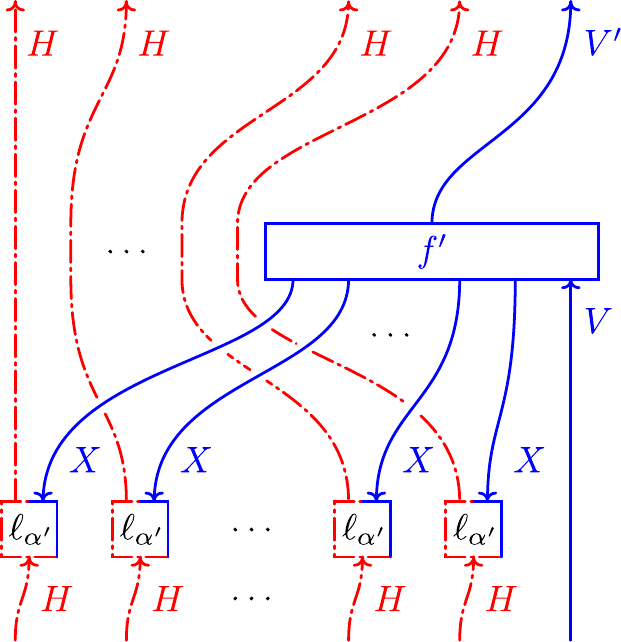}
  \caption{$\calC$-Colored $n$-string link graph representing the morphism $\Theta'(f')$ of $\Hom_{[n]\calC}([n]V,[n]V')$.}
  \label{isomorphism_3}
 \end{figure}

 Analogously, we can define $\Theta'$ as the map that sends every
 morphism $f'$ of $\Hom_{\calC}((\X^*)^{\otimes n} \otimes V,V')$ to
 the morphism $\Theta'(f')$ of $\Hom_{[n]\calC}([n]V,[n]V')$ given by
 the image under $F_{\lambda}$ of the $\calC$-colored bichrome graph
 represented in Figure \ref{isomorphism_3}.

 It is now relatively easy to see that $\Theta'^{-1}$ is the map that sends every
 morphism $f'$ of $\Hom_{[n]\calC}([n]V,[n]V')$ to the morphism
 $\Theta'^{-1}(f')$ of $\Hom_{\calC}(V,\X^{\otimes n} \otimes V')$
 given by the image under $F_{\lambda}$ of the $\calC$-colored
 bichrome graph represented in Figure \ref{isomorphism_4}.
 
 \begin{figure}[ht]
  \centering
  \includegraphics{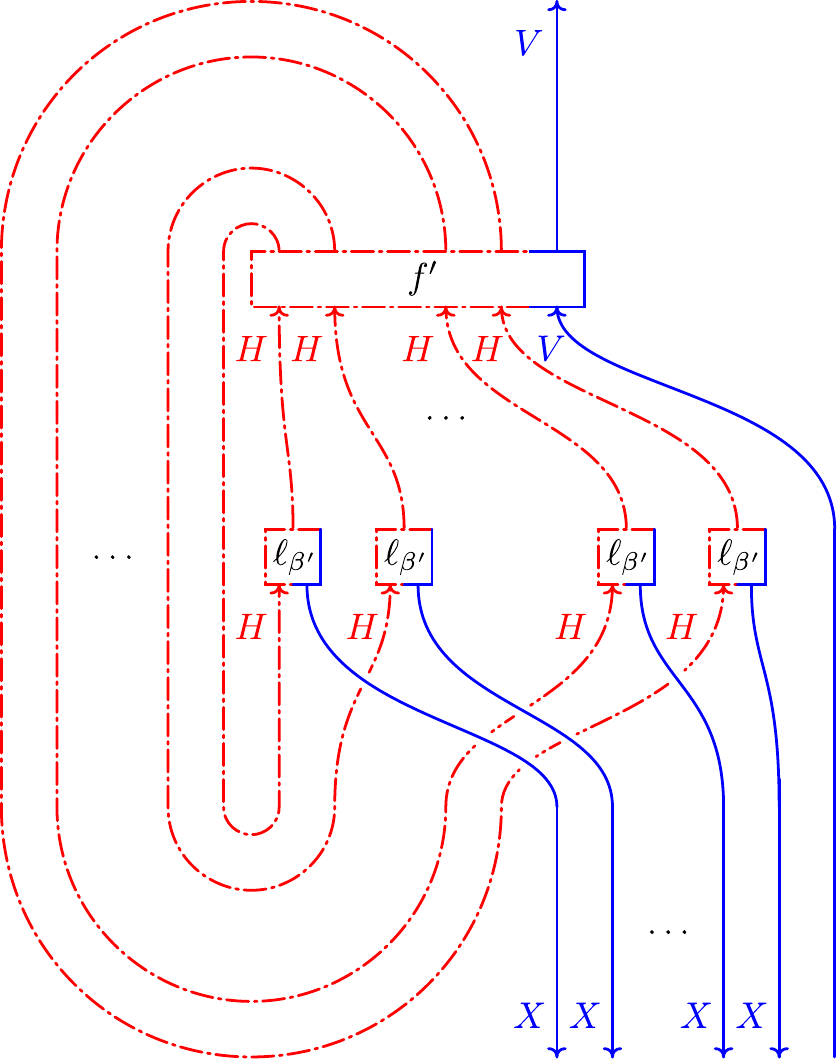}
  \caption{$\calC$-Colored bichrome graph representing the morphism $\Theta'^{-1}(f')$
  of $\Hom_{\calC}((\X^*)^{\otimes n} \otimes V,V')$.}
  \label{isomorphism_4}
 \end{figure}

\end{proof}

\FloatBarrier

We are now ready to define our algebraic TQFT spaces. They will be constructed as (quotients of) certain morphism spaces of the type we studied before. For every $g \in \N$ and for every object $V$ of $\calC$ we consider the vector spaces
\[
 \tilde{\calX}_{g,V} := \Hom_\calC(H,\X^{\otimes g} \otimes V), \quad
 \calX'_{g,V} := \Hom_\calC((\X^*)^{\otimes g} \otimes V,\one).
\]
Remark that the space $\tilde{\calX}_{g,V}$ is isomorphic to $\X^{\otimes g} \otimes V$ via the isomorphism mapping every $\tilde{f} \in \tilde{\calX}_{g,V}$ to $\tilde{f}(1_H)$.

We define now a bilinear pairing $\brk{\cdot,\cdot}_{\calX} : \calX'_{g,V} \times \tilde{\calX}_{g,V} \to \kk$ 
as follows: for every $f' \in \calX'_{g,V}$ and every $f \in \tilde{\calX}_{g,V}$ we set 
\[
 \brk{f',f}_{\calX} := \rmt_H(F_{\lambda}(T_{\calX,f',f}))
\]
where $T_{\calX,f',f}$ is the $\calC$-colored bichrome graph represented in Figure \ref{F:algebraic_pairing_X}. 

\begin{figure}[htb]
 \centering
 \includegraphics{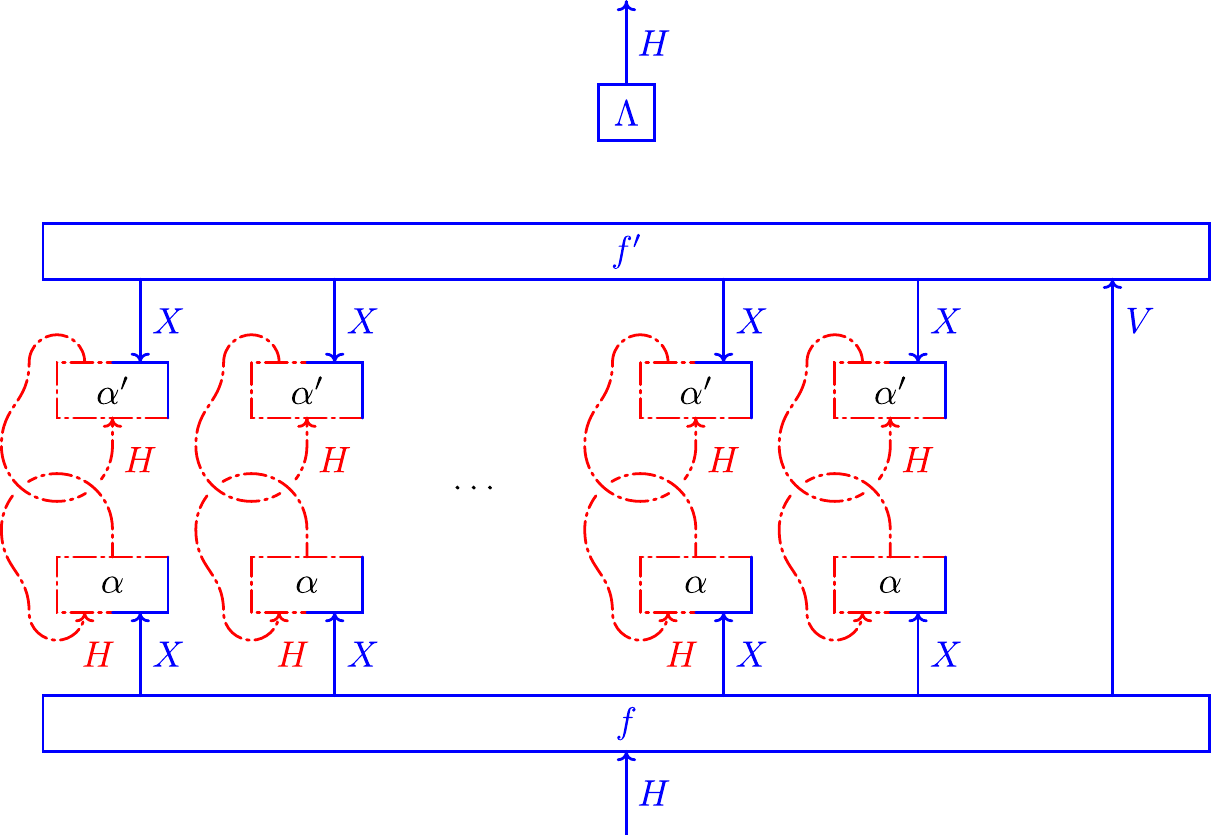}
 \caption{$\calC$-Colored bichrome graph $T_{\calX,f',f}$ defining the pairing $\brk{f',f}_{\calX}$.}
 \label{F:algebraic_pairing_X}
\end{figure}

We define $\calX_{g,V}$ to be the quotient of $\tilde{\calX}_{g,V}$ with respect to the right radical of this pairing.
Then we can induce a bilinear pairing between $\calX'_{g,V}$ and $\calX_{g,V}$ which we still denote by $\brk{\cdot,\cdot}_{\calX}$.

\begin{proposition} 
 The pairing $\brk{\cdot,\cdot}_{\calX} : \calX'_{g,V} \times \calX_{g,V} \to \kk$ is non-degenerate.
\end{proposition}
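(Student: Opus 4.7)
The right radical of $\brk{\cdot,\cdot}_\calX$ vanishes by the very definition of $\calX_{g,V}$ as the quotient of $\tilde{\calX}_{g,V}$ by that radical, so the only content of the statement is the triviality of the left radical. To this end, fix $f'\in\calX'_{g,V}$ with $\brk{f',f}_\calX = 0$ for every $f\in\tilde{\calX}_{g,V}$; the goal is to prove $f' = 0$. My plan is to rewrite the pairing in a closed form that makes the claim an immediate consequence of the invertibility of $h_\X$.

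First, I would use the canonical isomorphism $\tilde{\calX}_{g,V} \cong \X^{\otimes g}\otimes V$ given by $f\mapsto w := f(1_H)$ (recorded just before the definition of the pairing) to parameterize $f$ by an arbitrary vector $w\in\X^{\otimes g}\otimes V$. Next I would unpack the defining formula $\brk{f',f}_\calX = \rmt_H(F_\lambda(T_{\calX,f',f}))$ by explicitly reading off the graph $T_{\calX,f',f}$: apart from the two coupons for $f$ and $f'$, it consists of $g$ red connectors built from the elementary morphisms $\alpha$, $\alpha'$, $\ell_\alpha$, $\ell_{\alpha'}$ of Lemma \ref{L:important_morphisms}, together with one blue $H$-edge which carries a projective color and provides the cutting used by $\rmt_H$. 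Integrating the red parts via $\int_\calC$ and invoking the identities $\int_\calC(\ell_{\alpha'}\circ\alpha)= \int_\calC(\alpha'\circ\ell_\alpha) = h_\X$ of Lemma \ref{L:important_morphisms} collapses the red connectors to $g$ copies of $h_\X$, while the blue $H$-loop contributes the factor $\rmt_H(\Lambda\circ\epsilon) = 1$ (as noted immediately after Theorem \ref{T:BBGa}). The pairing then reduces to
\[
 \brk{f',f}_\calX \;=\; f'\bigl((h_\X^{\otimes g}\otimes\id_V)(w)\bigr).
\]

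With the pairing in this form, the hypothesis forces $f'\circ(h_\X^{\otimes g}\otimes\id_V) = 0$ as a linear functional on $\X^{\otimes g}\otimes V$. By Lemma \ref{L:Drinfeld_and_Radford_for_X} the morphism $h_\X = \varphi_X\circ\psi_X\circ\varphi_X$ is an isomorphism, hence so is $h_\X^{\otimes g}\otimes\id_V$; therefore $f'$ vanishes on the whole of $(\X^*)^{\otimes g}\otimes V$, giving $f' = 0$ as required. The main obstacle in the plan is the middle step, namely checking that the red connectors in $T_{\calX,f',f}$ are distributed exactly so that their integration yields $h_\X^{\otimes g}$: one must be careful about which of the four pieces $\alpha,\alpha',\ell_\alpha,\ell_{\alpha'}$ sits on each connector and in what order the integrations are performed. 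A mismatch would replace $h_\X^{\otimes g}\otimes\id_V$ by a different composition of the isomorphisms $\varphi_X, \psi_X$ from Lemma \ref{L:Drinfeld_and_Radford_for_X}, but any such composition is still an isomorphism and the final conclusion is unaffected.
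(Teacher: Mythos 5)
Your proof is correct and takes essentially the same route as the paper: both reduce the pairing, via the graph of Figure \ref{F:algebraic_pairing_X}, the identities of Lemma \ref{L:important_morphisms} and the normalization $\rmt_H(\Lambda\circ\epsilon)=1$, to the evaluation $f'\bigl((h_\X^{\otimes g}\otimes\id_V)(f(1_H))\bigr)$, and then conclude from the invertibility of $h_\X$ given by Lemma \ref{L:Drinfeld_and_Radford_for_X}. The paper merely phrases the argument contrapositively, exhibiting for a nonzero $f'$ an explicit $f_{\underline{x}\otimes v}$ that pairs nontrivially with it.
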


\begin{proof}
 If $f' \in \calX'_{g,V}$ is non-zero then there exists some
 $\underline{x} \otimes v \in \X^{\otimes g}\otimes V$ such that $f'(h_\X^{\otimes g}(\underline{x}) \otimes v) \neq 0$.  
 Let $f_{\underline{x} \otimes v} \in \tilde{\calX}_{g,V}$ be the unique $H$-module morphism mapping $1_H$ to 
 $\underline{x} \otimes v$. Then $f' \circ (h_\X^{\otimes g} \otimes \id_V) \circ f_{\underline{x} \otimes v}$ 
 is a non-zero $H$-module morphism from $H$ to $\Bbbk$. This means it is a non-zero multiple of $\epsilon$, which gives 
 \[
  \rmt_H \left( \Lambda \circ f' \circ (h_\X^{\otimes g} \otimes \id_V) \circ f_{\underline{x} \otimes v} \right) \neq 0.
 \]
 Thus the left radical of $\brk{\cdot,\cdot}_{\calX} : \calX'_{g,V} \times \calX_{g,V} \to \kk$ is trivial.
 Since the right radical is trivial by definition, we can conclude.
\end{proof}

It will be useful for the following to have another model of these algebraic TQFT spaces, corresponding to the isomorphism provided by Proposition \ref{P:HomG}.
For every $g \in \N$ and for every object $V$ of $\calC$ we consider vector spaces
\[
 \tilde{\calS}_{g,V} := \Hom_{[g]\calC}([g]H,[g]V), \quad
 \calS'_{g,V} := \Hom_{[g]\calC}([g]V,[g]\one).
\]
Remark that, thanks to Proposition \ref{P:HomG}, we have explicit isomorphisms between $\tilde{\calX}_{g,V}$ and $\tilde{\calS}_{g,V}$ and between $\calX'_{g,V}$ and $\calS'_{g,V}$.

We define now a bilinear pairing $\brk{\cdot,\cdot}_{\calS} : \calS'_{g,V} \times \tilde{\calS}_{g,V} \to \kk$ 
as follows: for every $f' \in \calS'_{g,V}$ and every $f \in \tilde{\calS}_{g,V}$ we set 
\[
 \brk{f',f}_{\calS} := \rmt_H(F_{\lambda}(T_{\calS,f,f'}))
\]
where $T_{\calS,f,f'}$ is the $\calC$-colored bichrome graph represented in Figure \ref{F:algebraic_pairing_S}.

\begin{figure}[htb]
 \centering
 \includegraphics{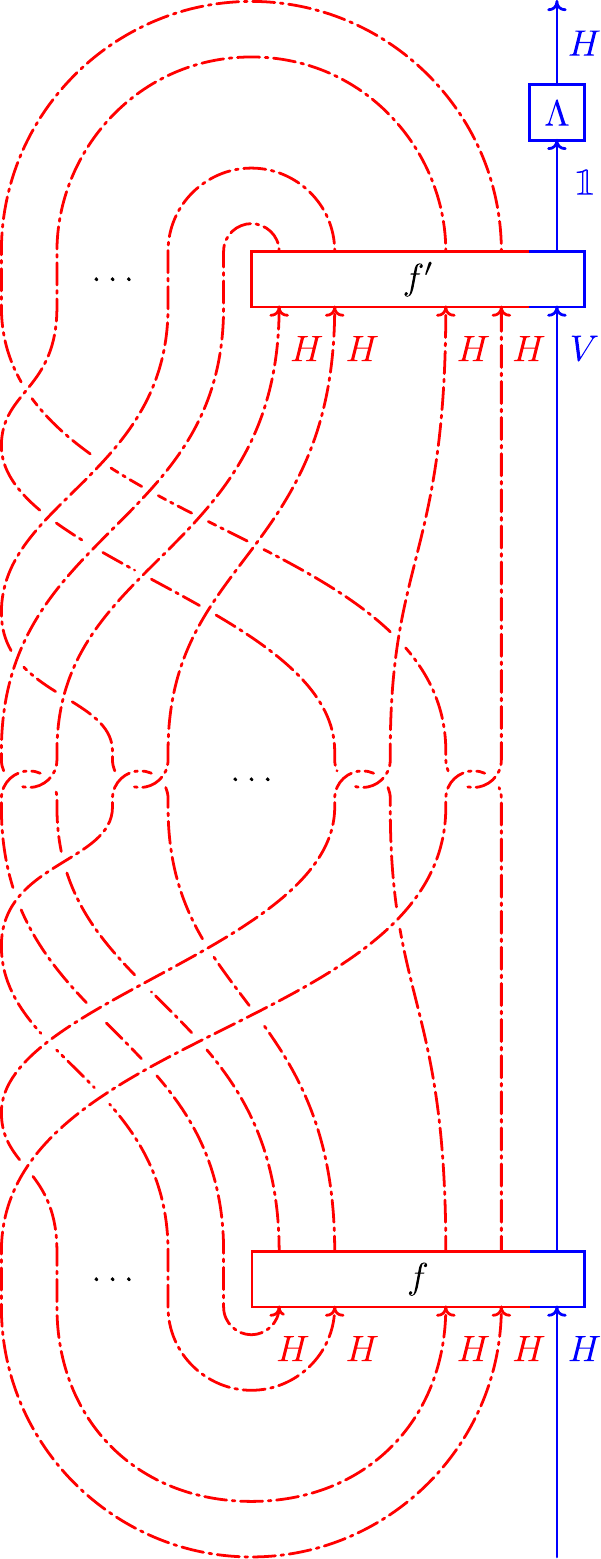}
 \caption{$\calC$-Colored bichrome graph $T_{\calS,f,f'}$ defining the pairing $\brk{f',f}_{\calS}$.}
 \label{F:algebraic_pairing_S}
\end{figure}

We define $\calS_{g,V}$ to be the quotient of $\tilde{\calS}_{g,V}$ with respect to the right radical of this pairing.
Then we can induce a bilinear pairing between $\calS'_{g,V}$ and $\calS_{g,V}$ which we still denote by $\brk{\cdot,\cdot}_{\calS}$.

\begin{proposition}\label{P:translation_of_pairings}
 Every morphism $f' \in \calX'_{g,V}$ and every morphism $f \in \tilde{\calX}_{g,V}$ satisfy
 \[
  \brk{f',f}_{\calX} = \brk{\Theta'(f'),\Theta(f)}_{\calS}
 \]
 where $\Theta$ and $\Theta'$ are the explicit isomorphisms given by Proposition \ref{P:HomG}.
\end{proposition}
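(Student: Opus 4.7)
The strategy is a graphical computation: I will rewrite the graph $T_{\calS,\Theta(f),\Theta'(f')}$ that defines the right-hand pairing by substituting in the explicit presentations of $\Theta(f)$ and $\Theta'(f')$ coming from Figures \ref{isomorphism_1} and \ref{isomorphism_3}, and then simplify the resulting $\calC$-colored bichrome graph until it becomes $T_{\calX,f',f}$ of Figure \ref{F:algebraic_pairing_X}. Since both pairings are defined as $\rmt_H \circ F_\lambda$ applied to the corresponding graph, an equality of graphs (up to isotopy and Hopf-algebraic identities preserved by $F_\lambda$) will immediately give the claim.

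More concretely, the coupon in Figure \ref{F:algebraic_pairing_S} labeled $\Theta(f)$ has the structure of Figure \ref{isomorphism_1}: a coupon colored by $f \in \Hom_{\calC}(H, X^{\otimes g} \otimes V)$ followed by a column of $\alpha$-boxes at each of the $g$ stabilizing red strands; symmetrically, the $\Theta'(f')$ coupon expands via Figure \ref{isomorphism_3} to a column of $\alpha'$- (or $\ell_{\alpha'}$-) boxes preceding a coupon colored by $f'$. Stacking these in $T_{\calS,\Theta(f),\Theta'(f')}$ produces a graph in which, on each of the $g$ stabilizing red strands, one sees exactly the local pattern that computes $\alpha' \circ \ell_{\alpha}$ (or $\ell_{\alpha'} \circ \alpha$, depending on which of the two equivalent presentations one uses from Lemma \ref{L:important_morphisms}). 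Applying Proposition \ref{HRT_functor_proposition}, each such column is evaluated by $F_\lambda$ to $\int_{\calC}(\alpha' \circ \ell_{\alpha}) = h_\X \in \Hom_{\calC}(\X,\X^*)$.

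Thus after collapsing each stabilizing strand, we are left with a closed bichrome graph containing the $f$-coupon, the $f'$-coupon, and $g$ copies of $h_\X$ connecting them, in precisely the pattern of Figure \ref{F:algebraic_pairing_X}. This is the graph $T_{\calX,f',f}$ defining $\brk{f',f}_{\calX}$, so
\[
 \brk{\Theta'(f'),\Theta(f)}_{\calS} = \rmt_H(F_\lambda(T_{\calS,\Theta(f),\Theta'(f')})) = \rmt_H(F_\lambda(T_{\calX,f',f})) = \brk{f',f}_{\calX}.
\]

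The main obstacle will be bookkeeping: correctly matching the orientations and the placement of the $h_\X$ maps so that the column simplification really does yield $h_\X$ and not some variant involving $\varphi_X$, $\psi_X$ or their inverses. To control this, I would rely on the identities $\int_{\calC}(\alpha' \circ \ell_{\alpha}) = \int_{\calC}(\ell_{\alpha'} \circ \alpha) = h_\X$ of Lemma \ref{L:important_morphisms}, which encode exactly the ambiguity between the two normalizations of $\Theta$ and $\Theta'$, and the fact that $F_\lambda$ factors through $[g]\calC$ (Proposition \ref{image_RT} and the remark after Proposition \ref{HRT_functor_proposition}), which lets the evaluation of $\lambda$ along each closed red stabilizing strand be performed strand by strand without interfering with the remaining colored morphisms.
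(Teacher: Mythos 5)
Your overall strategy---substitute the graphical presentations of $\Theta(f)$ and $\Theta'(f')$ into the graph computing $\brk{\cdot,\cdot}_{\calS}$ and simplify until the graph $T_{\calX,f',f}$ of Figure~\ref{F:algebraic_pairing_X} appears---is the same as the paper's, and you have correctly isolated the identity $\int_{\calC}(\alpha'\circ\ell_{\alpha})=\int_{\calC}(\ell_{\alpha'}\circ\alpha)=h_{\X}$ of Lemma~\ref{L:important_morphisms} as the algebraic heart of the matter. The gap is in the step where you declare that, handle by handle, the combined graph is exactly the braid closure of $\alpha'\circ\ell_{\alpha}$ and may be evaluated strand by strand. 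The graph $T_{\calS,\Theta(f),\Theta'(f')}$ (Figure~\ref{F:translation_of_pairings}) is not a disjoint stack of such columns: the pairing graph of Figure~\ref{F:algebraic_pairing_S} is modeled on the surgery presentation of the Heegaard splitting that reappears in Proposition~\ref{P:pair}, and it contains red Hopf links tangled with the red strands that carry the $\alpha$- and $\alpha'$-coupons. Before any local pattern can be recognized, and before the closed red components can be split off and evaluated separately via $\int_{\calC}\circ F_{\lambda}=F_{\lambda}\circ\int_{\calR}$, these Hopf links have to be removed. The paper does this by sliding the $\alpha$-bearing strands over the topmost components and the $\alpha'$-bearing strands over the bottommost components (Proposition~\ref{P:slide}), after which the Hopf links are split from the rest of the graph and can be deleted because their contribution under $F_{\lambda}$ is trivial; only then is the remaining graph $T_{\calX,f',f}$. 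Your proposal never mentions these Hopf links or the handle slides, so the central graphical identification is asserted rather than proved.

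Relatedly, your hedge that the identities of Lemma~\ref{L:important_morphisms} ``encode the ambiguity between the two normalizations'' does not cover the actual mechanism. The twist $\psi_X\circ\varphi_X$ that upgrades $\int_{\calC}(\alpha'\circ\alpha)=\varphi_X$ to $h_{\X}$ is produced by the interplay between the coupons in Figures~\ref{isomorphism_1} and~\ref{isomorphism_3} and the red components of the pairing graph, and this is precisely the bookkeeping your argument defers: without tracking which circles survive and which are slid away, you cannot exclude ending up with $\varphi_X^{\otimes g}$ instead of $h_{\X}^{\otimes g}$, or with a spurious factor of $\psi_X\circ\varphi_X$ on each handle, either of which would make the two pairings disagree. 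To complete your proof you need exactly the two missing ingredients of the paper's argument: the handle-slide invariance of $F_{\lambda}$ from Proposition~\ref{P:slide}, and the fact that a split red Hopf link evaluates trivially, which together reduce Figure~\ref{F:translation_of_pairings} to Figure~\ref{F:algebraic_pairing_X}.
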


\begin{proof}
 The pairing $\brk{\Theta'(f'),\Theta(f)}_{\calS}$ is given by the modified trace of the endomorphism of $H$ defined as the evaluation of the functor $F_{\lambda}$ against the $\calC$-colored bichrome graph represented in Figure \ref{F:translation_of_pairings}. 
 But remark now that every red strand meeting an $\alpha$-colored coupon can be slid upwards using the topmost component of the Hopf link that is tangled to it, and analogously every red strand meeting an $\alpha'$-colored coupon can be slid downwards using the bottommost component of the Hopf link that is tangled to it. This way, we can disentangle all Hopf links from the rest of the bichrome graph. Then, since red Hopf links provide trivial contribution to the image under the functor $F_{\lambda}$, we can just remove them, and we are left with a $\calC$-colored ribbon graph representing a morphism whose modified trace gives precisely $\brk{f',f}_{\calX}$.
 
 \begin{figure}[bt]
  \centering
  \includegraphics{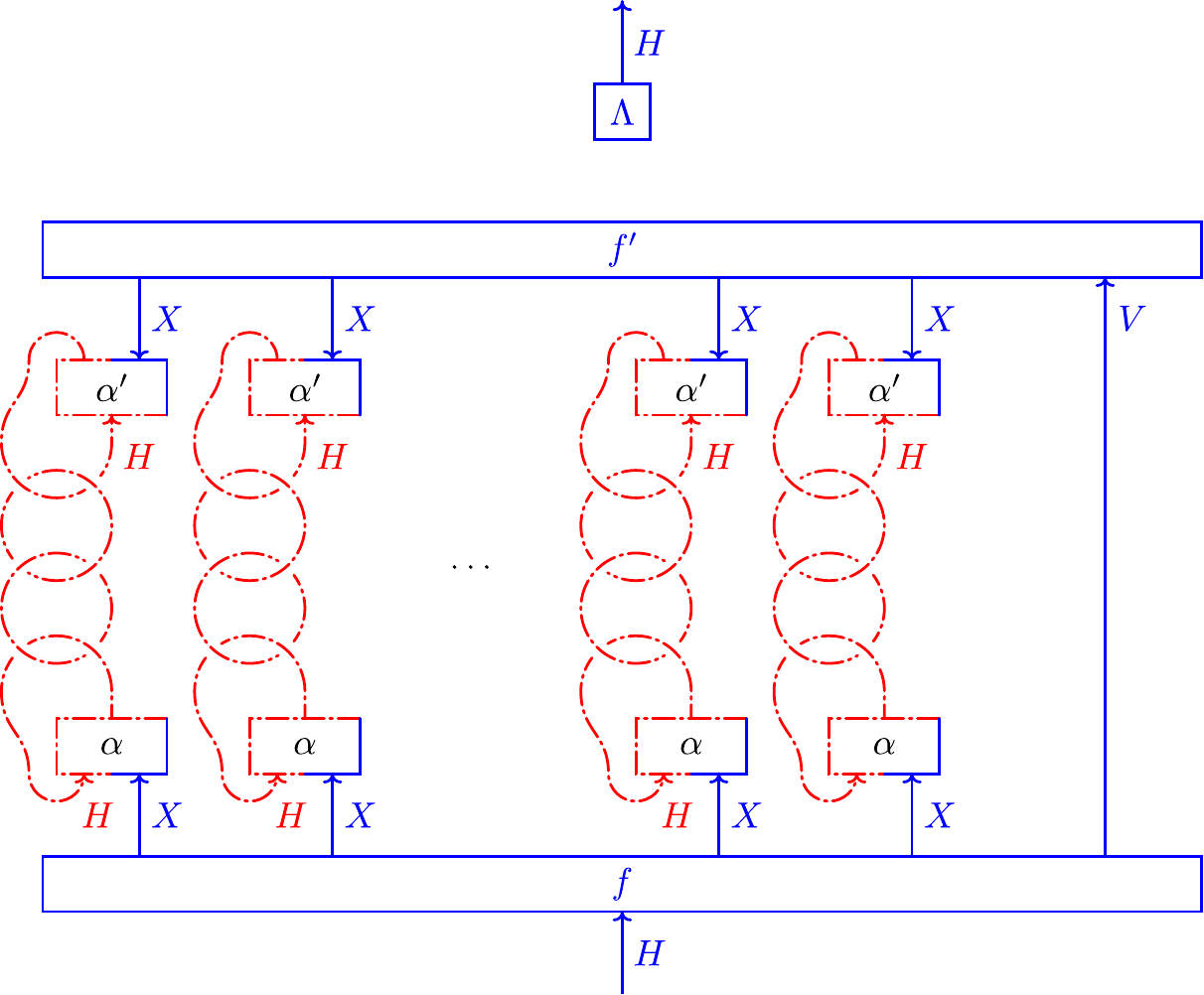}
  \caption{$\calC$-Colored bichrome graph computing the pairing $\brk{\Theta'(f'),\Theta(f)}_{\calS}$.}
  \label{F:translation_of_pairings}
 \end{figure}
\end{proof}

\FloatBarrier

\subsection{Skein equivalence}\label{S:skein_equivalence}

In this subsection we introduce the concept of skein equivalence for morphisms of $[n]\calC$ and we establish some related properties
of $\calC$-colored bichrome graphs.
If $[n](\underline{\epsilon},\underline{V})$ and $[n](\underline{\epsilon'},\underline{V'})$ are objects of 
$[n]\calR_{\lambda}$ then we say two formal linear combinations $\sum_{i=1}^m \alpha_i \cdot T_i$ and $\sum_{i'=1}^{m'} \alpha'_{i'} \cdot T'_{i'}$ of morphisms of $\Hom_{[n]\calR_{\lambda}}([n](\underline{\epsilon},\underline{V}),[n](\underline{\epsilon'},\underline{V'}))$ 
are \textit{skein equivalent} if
\[
 \sum_{i=1}^m \alpha_i \cdot F_{\lambda}(T_i) = \sum_{i'=1}^{m'} \alpha'_{i'} \cdot F_{\lambda}(T'_{i'}).
\]
Such a skein equivalence will be denoted
\[
 \sum_{i=1}^m \alpha_i \cdot T_i \doteq \sum_{i'=1}^{m'} \alpha'_{i'} \cdot T'_{i'}.
\]

\begin{lemma}\label{cutting_lemma}
 There exists a \textit{modularity parameter} $\zeta \in \Bbbk^*$ realizing the skein equivalence of
 Figure \ref{cutting}.
\end{lemma}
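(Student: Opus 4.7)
The plan is to verify the equivalence by computing the image under $F_\lambda$ of both sides of the diagram in Figure \ref{cutting} and exhibiting an invertible scalar $\zeta$ making them equal. Without loss of generality, after applying the bead functor $B_\calC$ from the proof of Proposition \ref{image_RT}, each side becomes a linear combination of morphisms in $\calB_{\Vect_\Bbbk}$ whose red components contribute beads built from the R-matrix $R = \sum a_i \otimes b_i$ (and its inverse), and whose red closed components are evaluated against $\lambda$. The cutting relation isolates a local region where a red strand passes across a portion of the graph, and the evaluation of that red closed piece against $\lambda$ inserts an element of $H$ obtained via the Drinfeld map $\psi$.

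The first step is to write out each side explicitly in the bead formalism, collecting the relevant R-matrix beads along the red strand(s) and pushing them onto the neighboring blue edges. Using that $\lambda$ is a right integral and a quantum character, together with the identity $(S \otimes S)(R) = R$ and standard properties of the pivotal element $g$, one reduces the left-hand side to a bead-decorated graph carrying a universal element of the form $\sum_{i,j} \lambda(b_j a_i\,?) \cdot a_j b_i$, i.e.\ precisely the action of the Drinfeld map $\psi$ on the appropriate functional. The second step is then to do the same for the right-hand side of Figure \ref{cutting}, in which the red strand is absent or replaced by a simpler local configuration; the result is the same bead-decorated graph up to a global scalar that depends only on the chosen normalizations.

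The modularity parameter $\zeta$ is then defined to be this scalar ratio. To verify $\zeta \in \Bbbk^*$, I would use the factorizability hypothesis: since $\psi \colon H^* \to H$ is an isomorphism, the universal element extracted above is non-zero, and the non-degeneracy of $H$ together with $\Delta_+ \Delta_- \neq 0$ ensures that the normalization does not vanish. This is consistent with the discussion at the beginning of Section \ref{S:TQFT}, where factorizability is used precisely to force such cutting relations to close up modularly, and matches the role of $\calD$ and $\delta$ introduced in Subsection \ref{S:modified_traces}.

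The main obstacle is bookkeeping. The red component being ``cut'' may cross several blue strands and interact with coupons colored by morphisms in $[k]\calC$, so one must carefully track Sweedler indices on each coproduct produced by sliding beads across crossings, and invoke unimodularity ($S(\Lambda) = \Lambda$, giving $\lambda(S(x)) = \lambda(x)$) and the character property $\lambda(xy) = \lambda(S^2(y)x)$ repeatedly to simplify. Once the two sides are brought to a normal form governed by $\psi$, the invertibility of $\psi$ yields both the existence of $\zeta$ and its non-vanishing. One may also verify independence of the result on the choice of diagram representing each side as in the proof of Proposition \ref{HRT_functor_proposition}, by moving base points along closed red components and using that $\lambda \circ R_g^*$ is a character.
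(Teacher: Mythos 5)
There is a genuine gap at the heart of your argument. The left-hand side of Figure \ref{cutting} evaluates, under $F_{\lambda}$, to left multiplication $L_z$ by the element $z=\psi(\lambda)$ inserted by the encircling red meridian, while the right-hand side evaluates to the rank-one operator $x\mapsto \epsilon(x)\cdot\Lambda$ coming from the cut strand. These are \emph{not} ``the same bead-decorated graph up to a global scalar that depends only on the chosen normalizations'': asserting that they agree up to a scalar is exactly the content of the lemma, and it holds precisely because $z$ turns out to be a nonzero multiple of the cointegral $\Lambda$ (so that $L_z(x)=zx=\zeta\epsilon(x)\Lambda$ by unimodularity). Your bead bookkeeping correctly produces the Drinfeld-map element on the left, but no amount of bookkeeping on the right will produce it; one needs the algebraic identity $\psi(\lambda)=\zeta\Lambda$. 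The paper supplies the missing idea by a transparency argument: using the handle-slide property (Proposition \ref{P:slide}) one shows $L_z$ is transparent, which applied to $1_H\otimes 1_H$ gives $\psi(f)z=f(1_H)\cdot z$ for all $f\in H^*$, i.e.\ $xz=\epsilon(x)\cdot z$ for all $x$; hence $z$ spans a one-dimensional ideal, which must be the cointegral ideal, so $z=\zeta\Lambda$.

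A secondary point: your justification of $\zeta\neq 0$ via the non-degeneracy condition $\Delta_+\Delta_-\neq 0$ is misplaced (and, in the paper's logical order, somewhat circular, since Corollary \ref{non-degeneracy_factorizable_lemma} \emph{derives} $\Delta_-\Delta_+=\zeta$ from this very lemma). The correct reason is simply that $\psi$ is injective by factorizability and $\lambda\neq 0$, so $z=\psi(\lambda)\neq 0$, and therefore its coefficient against the cointegral $\Lambda$ is invertible.
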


\begin{figure}[ht]
 \centering
 \includegraphics{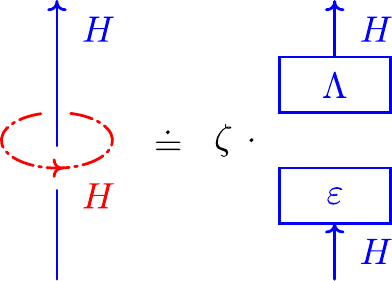}
 \caption{Cutting property for red meridians.}
 \label{cutting}
\end{figure}

\begin{proof}
 The morphism of $\calC$ obtained by applying the
 functor $F_{\lambda}$ to the $\calC$-colored bichrome graph represented in the left hand part of Figure \ref{cutting}
 is equal to $L_z$ for the non-trivial central element $z = \psi(\lambda) \in \rmZ(H)$. 
 We claim that $z = \zeta \cdot \Lambda$ for some $\zeta \in \Bbbk^*$. Indeed,
 Proposition \ref{P:slide} implies that the two $\calC$-colored bichrome graphs represented in Figure \ref{transparent} 
 are skein equivalent.
 
 \begin{figure}[htb]
  \centering
  \includegraphics{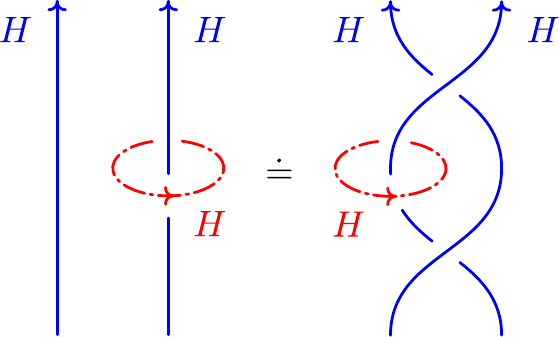}
  \caption{Transparency of $L_z$.}
  \label{transparent}
 \end{figure}

 The morphism of $\calC$ determined by the left-hand part of Figure \ref{transparent}
 maps $1_H \otimes 1_H$ to $1_H \otimes z$, while
 the one determined by the right-hand part of Figure \ref{transparent} maps $1_H \otimes 1_H$ to $\sum_{i,j=1}^r b_ja_i \otimes a_jb_iz$. 
 Now, since the Drinfeld map $\psi$ is an isomorphism, every element $x \in H$ can be written as
 $\sum_{i,j = 1}^r \psi^{-1}(x)(b_ja_i) \cdot a_jb_i$. Then 
 \[
  xz = \sum_{i,j = 1}^r \psi^{-1}(x)(b_ja_i) \cdot a_jb_iz = \psi^{-1}(x)(1_H) \cdot z.
 \]
 This means $z$ spans a 1-dimensional $H$-submodule in $H$, which has to coincide with the ideal of two-sided cointegrals.
\end{proof}

If $V$ is a projective object of $H$ then let us choose a section 
$s_V: V \rightarrow H \otimes V$
of the epimorphism
$\epsilon \otimes \id_V : H \otimes V \rightarrow V$, i.e. an $H$-module morphism satisfying 
$(\epsilon \otimes \id_V) \circ s_V = \id_V$. 
 We define now an operation on admissible $n$-string link graphs
which we call \textit{turning a red cycle blue}. Let $T$ be an $n$-string link graph containing an $H$-colored red cycle $C$ 
and a $V$-colored blue edge $e$ for some projective object $V$ of $\calC$. Then Figure \ref{red_turns_blue_1}
explains how to replace $C$ and $e$ with a set of blue edges and coupons to obtain a new $n$-string link graph
$T'$.

\begin{figure}[ht]
 \centering
 \includegraphics{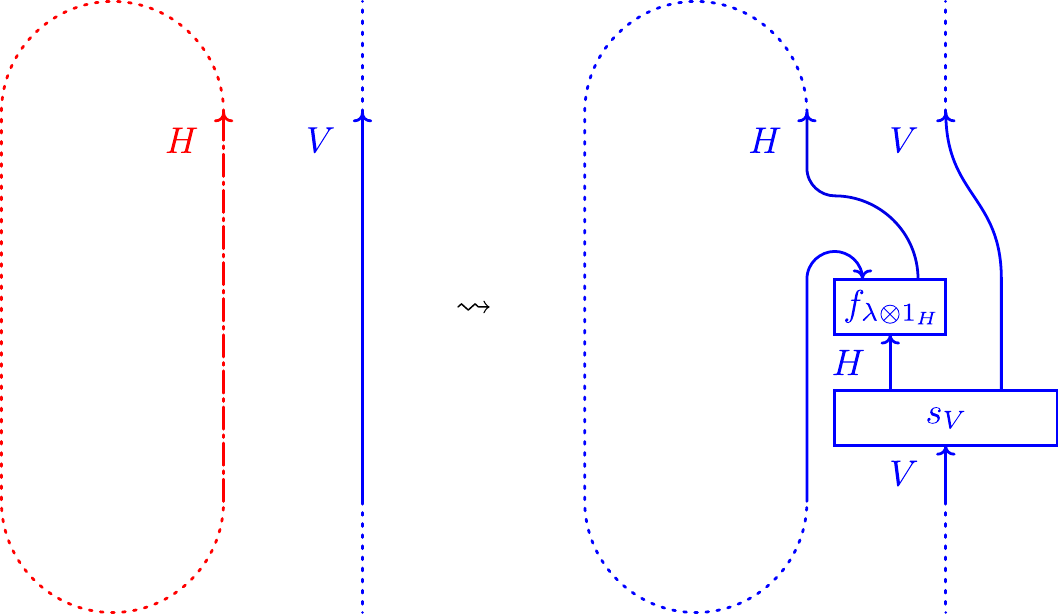}
 \caption{Turning a red cycle blue. The map $f_{\lambda \otimes 1_H}$ is the unique morphism of 
 $H$-modules sending the generator $1_H$ of $H$ to the vector $\lambda \otimes 1_H$ of $H^* \otimes H$.}
 \label{red_turns_blue_1}
\end{figure}

\begin{remark}
 When performing the operation we just described to a red cycle in a bichrome graph
 we have to be extremely careful with coupons. Indeed a direct
 switch of the color of some edges may not result in a bichrome graph.
 To be precise we have to replace coupons as shown in Figure \ref{red_turns_blue_2}.
\end{remark}

\begin{figure}[htb]
 \centering
 \includegraphics{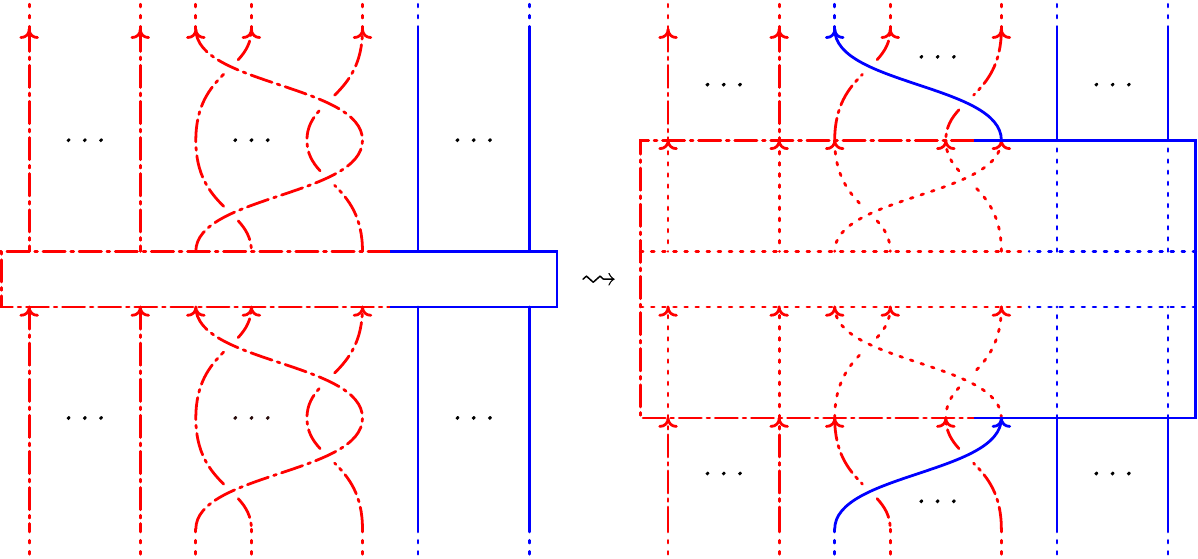}
 \caption{Recipe for replacing a coupon when turning a red cycle blue. The new coupon is to be colored with the image under
 $F_{\lambda}$ of the dashed graph it contains.}
 \label{red_turns_blue_2}
\end{figure}

\begin{lemma}\label{red_turns_blue_lemma}
 If $T$ is an admissible $n$-string link graph and $T'$ is obtained from $T$ by turning a red cycle blue then
 \[
  F_{\lambda}(T) = F_{\lambda}(T').
 \]
\end{lemma}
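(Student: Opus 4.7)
The plan is to prove the lemma by reducing to a bead-level computation via the bead functor $B_\calC$ introduced in the proof of Proposition \ref{HRT_functor_proposition}, and verifying that the algebraic contribution of the red cycle $C$ to $F_\lambda(T)$ coincides with the contribution of its blue replacement in $T'$.

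First, I would use an isotopy to present the red cycle $C$ as the closure of a braid, exactly as in the proofs of Propositions \ref{P:orient} and \ref{P:slide}. Then the contribution of $C$ to $F_\lambda(T)$ is computed by picking a basepoint on $C$, collecting the beads encountered while traversing $C$ according to its orientation into a single element $x_C \in H$, and evaluating $\lambda(x_C g)$. If $T \smallsetminus C$ denotes the morphism in $\calB_{\Vect_\Bbbk}$ obtained from $B_\calC(T)$ by removing the component corresponding to $C$, then
\[
 F_\lambda(T) \;=\; \lambda(x_C g) \cdot F_{\Vect_\Bbbk}(T \smallsetminus C).
\]

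Next, I would analyze $T'$ directly. After turning $C$ blue, the resulting $n$-string link graph carries a blue $H$-colored edge that follows the trajectory of $C$ but is cut open at a chosen point on the projective edge $e$, with a coupon colored by $f_{\lambda \otimes 1_H}$ inserted at one end and the section $s_V$ (together with the augmentation $\epsilon \otimes \id_V$) providing the projective splitting of the $V$-colored blue edge at the other. The bead analysis of this new configuration produces exactly the same beads along the now-blue trajectory of $C$, hence the same element $x_C$ is collected. The coupon $f_{\lambda \otimes 1_H}$ inserts $\lambda \otimes 1_H \in H^* \otimes H$ at the cut point, so its $\lambda$ factor pairs with $x_C g$ (the factor $g$ arising from the framing bead produced by going around the cycle), while the $1_H$ factor is fed into the projective splitting and immediately re-collapsed by $\epsilon \otimes \id_V$ via $(\epsilon \otimes \id_V) \circ s_V = \id_V$, contributing a trivial factor. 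Thus
\[
 F_\lambda(T') \;=\; \lambda(x_C g) \cdot F_{\Vect_\Bbbk}(T \smallsetminus C) \;=\; F_\lambda(T).
\]

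The main obstacle will be the careful bookkeeping of the coupons of $T$ through which $C$ passes: according to Figure \ref{red_turns_blue_2}, such coupons must be replaced by composite coupons colored by the image under $F_\lambda$ of the indicated sub-bichrome-graph, and one must verify that the bead expansion of these composite coupons reproduces exactly the beads that would have arisen from the original red crossings between $C$ and the rest of $T$. This is a direct consequence of the functoriality of $F_\lambda$ proven in Proposition \ref{HRT_functor_proposition}, which ensures that the local replacement rules intertwine correctly with the global bead computation. Once this is checked, the equality $F_\lambda(T) = F_\lambda(T')$ follows from the identification of the two bead-level expressions above.
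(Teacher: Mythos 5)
Your reduction of the red side to the bead formula $F_{\lambda}(T)=\lambda(x_C\piv)\cdot F_{\Vect_{\Bbbk}}(T\smallsetminus C)$ is fine (it is the same computation as in Propositions \ref{P:orient} and \ref{P:slide}), and deferring the coupon bookkeeping of Figure \ref{red_turns_blue_2} to functoriality is acceptable. The gap is in the blue side, which is exactly where the content of the lemma sits. You treat the coupon $f_{\lambda\otimes 1_H}$ as if it rigidly inserted the fixed vector $\lambda\otimes 1_H$, with the $\lambda$ factor pairing against $x_C\piv$ and the $1_H$ factor ``immediately re-collapsed'' through $(\epsilon\otimes\id_V)\circ s_V=\id_V$. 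That would be correct only if the input leg of this coupon carried the unit element; but the whole point of routing the construction through a projective edge is that $\eta$ is not $H$-linear, so the coupon's input is the $H$-leg of the section $s_V$, carrying a general element $y$. Since $f_{\lambda\otimes 1_H}$ is $H$-linear and $H$ is free of rank one over itself, what actually gets inserted is $(y_{(1)}\cdot\lambda)\otimes y_{(2)}$, not $\lambda\otimes 1_H$, and after the cycle contributes $L_x$ and the cap closes, the blue replacement evaluates to $\lambda(S(y_{(1)})\,x\,y_{(2)})$ on the $H$-strand.

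The missing step is therefore the identity
\[
 \lev_H\circ(\id_{H^*}\otimes L_x)\circ f_{\lambda\otimes 1_H}=\lambda(x)\cdot\epsilon,
 \qquad\text{i.e.}\qquad
 \lambda(S(y_{(1)})\,x\,y_{(2)})=\lambda(x)\,\epsilon(y),
\]
which is precisely what the paper proves, using that $\lambda$ is a right integral (equivalently, its quantum-character property coming from unimodularity). Only after this does $(\epsilon\otimes\id_V)\circ s_V=\id_V$ reconstitute the $V$-edge and give the factor $\lambda(x_C\piv)$ matching the red computation. Your argument never invokes any property of $\lambda$ at this point; as written it would apply verbatim with an arbitrary linear form in place of the integral, for which the statement fails. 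So the proposal is not a complete proof: it asserts, rather than establishes, the one equality the lemma actually rests on.
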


\begin{proof}
 The proof follows from the equality
 \[
  \lev_H \circ\ (\id_{H^*} \otimes\ L_x) \circ f_{\lambda \otimes 1_H} = \lambda(x) \cdot \epsilon,
 \]
 which holds for every $x \in H$. To establish the identity let us consider $y \in H$. Then
 \begin{align*}
  \lev_H((\id_{H^*} \otimes\ L_x)(f_{\lambda \otimes 1_H}(y))) &= 
  \ \lev_H((\id_{H^*} \otimes\ L_x)((y_{(1)} \cdot \lambda) \otimes y_{(2)})) \\
  &= \ \lev_H((y_{(1)} \cdot \lambda) \otimes xy_{(2)}) \\
  &= \lambda(S(y_{(1)})xy_{(2)}) \\
  &= \lambda(x)\epsilon(y). \qedhere
 \end{align*}
\end{proof}

We can now give a new easy proof of the non-degeneracy of $H$ using skein methods.

\begin{corollary}\label{non-degeneracy_factorizable_lemma}
 If $H$ is a finite-dimensional factorizable ribbon Hopf algebra then $\Hmod$
 satisfies the non-degeneracy condition $\Delta_- \Delta_+ = \zeta \neq 0$.
\end{corollary}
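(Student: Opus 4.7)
The plan is to deduce both parts of the statement directly from Lemma \ref{cutting_lemma}. The inequality $\zeta \neq 0$ is essentially free: by the very definition of factorizability, the Drinfeld map $\psi$ is a linear isomorphism, so $z = \psi(\lambda)$ is non-zero (since $\lambda \neq 0$); combining this with the identity $z = \zeta \cdot \Lambda$ proved in Lemma \ref{cutting_lemma} and the fact that $\Lambda \neq 0$, we immediately conclude $\zeta \neq 0$. What remains is the equality $\Delta_- \Delta_+ = \zeta$, and for this I would compare two different evaluations of $F_\lambda$ on a single closed bichrome graph.

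The graph I would use is a red Hopf-link configuration: an $H$-colored $0$-framed red meridian $M$ linked once with a second closed red component $K$. First I would compute $F_\lambda(M \cup K)$ by applying the skein relation of Lemma \ref{cutting_lemma} to $M$, which replaces $M$ by a factor of $\zeta$ together with an insertion of the cointegral $\Lambda$ on $K$. Using $x\Lambda = \epsilon(x)\Lambda$ together with $\lambda(\Lambda) = 1$ and $\epsilon(g)=1$, the resulting closed red component collapses to the scalar $1$, yielding $F_\lambda(M \cup K) = \zeta$.

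Next I would recompute the same graph by handle-sliding (which is legitimate by Proposition \ref{P:slide}) so that the two red components become unlinked. A carefully chosen starting framing for $K$, combined with the framing arithmetic of the slide, produces a disjoint union of a $+1$-framed red unknot and a $-1$-framed red unknot. By the very definitions $\Delta_+ = \lambda(v^{-1})$ and $\Delta_- = \lambda(v)$ from Subsection \ref{S:modified_traces}, these two unknots are evaluated by $F_\lambda$ to $\Delta_+$ and $\Delta_-$ respectively, and the disjoint union is multiplicative, giving $F_\lambda(M \cup K) = \Delta_- \Delta_+$. Equating the two computations gives $\Delta_- \Delta_+ = \zeta$.

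The main obstacle will be the framing bookkeeping in the handle-slide step: one has to pick the initial framings so that, after sliding, the two components come apart with framings exactly $+1$ and $-1$, while also keeping track of the pivotal element $g$ inserted at each red closure. If this turns out to be awkward, a safer alternative is to compute the direct side algebraically using the bead formalism of Proposition \ref{HRT_functor_proposition}: the $0$-framed Hopf link evaluates to $\lambda(\psi(\lambda) g) = \lambda(z g)$, and substituting $z = \zeta \Lambda$ together with $g\Lambda = \Lambda$ (from unimodularity and $x\Lambda = \epsilon(x)\Lambda$) recovers $\zeta$ on one side, while a parallel bead computation on the handle-slid diagram recovers $\Delta_- \Delta_+$ on the other.
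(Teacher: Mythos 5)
Your argument is correct, but it takes a genuinely different route from the paper. The paper proves $\Delta_-\Delta_+=\zeta$ via Lemma \ref{red_turns_blue_lemma}: it introduces an auxiliary blue edge colored by a projective object $V$, trades a red surgery component for a blue $H$-colored cycle attached to that edge, and evaluates the resulting skein equivalence (Figure \ref{non-degeneracy_factorizable}) on both sides, obtaining $\Delta_-\Delta_+\cdot\id_V$ on one side (by Proposition \ref{P:slide}, the red link being the slide of a $+1$-framed unknot over a $-1$-framed one) and $\zeta\cdot\id_V$ on the other. You instead stay entirely in the red world: you evaluate $F_{\lambda}$ on the two-component red link produced by that same slide --- a Hopf link with framings $0$ and $-1$ --- once via slide invariance and multiplicativity under disjoint union, using $F_{\lambda}(U_{\pm 1})=\Delta_\pm$, and once via the identity $\psi(\lambda)=\zeta\cdot\Lambda$ underlying Lemma \ref{cutting_lemma}, which collapses the second component to $\zeta\,\lambda(\Lambda g)=\zeta$; as you note, this side is insensitive to the framing of that component since $\epsilon(v)=\epsilon(g)=1$ and $x\Lambda=\epsilon(x)\Lambda$. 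This is more elementary in that it avoids Lemma \ref{red_turns_blue_lemma} and the auxiliary projective edge altogether; the points to nail down are exactly the ones you flagged: the framing arithmetic (sliding $U_{+1}$ over $U_{-1}$ yields precisely the $(0,-1)$-framed Hopf link), and, if you want to quote Lemma \ref{cutting_lemma} with the encircled strand red rather than blue $H$-colored, the observation that $L_\Lambda=\Lambda\circ\epsilon$ is a legitimate color for a coupon with one red input and one red output (since $\Lambda$ is central), or simply your bead-level fallback with $z=\zeta\cdot\Lambda$; the cleanest justification for applying the local relation inside a closed diagram is the compatibility $\int_{\calC}\circ F_{\lambda}=F_{\lambda}\circ\int_{\calR}$. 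Your treatment of $\zeta\neq 0$ coincides with the paper's, being already contained in Lemma \ref{cutting_lemma} via factorizability. What the paper's formulation buys is the identity at the level of endomorphisms of an arbitrary projective object, which is the form in which it is reused later (e.g. in the surgery axioms and monoidality arguments), whereas your version isolates the scalar identity with a minimum of machinery.
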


\begin{proof}
 Thanks to Lemma \ref{red_turns_blue_lemma} we have the skein equivalence of Figure \ref{non-degeneracy_factorizable}. Now the functor $F_{\lambda}$ maps the left-hand side of Figure \ref{non-degeneracy_factorizable} to $\Delta_- \Delta_+ \cdot \id_V$ because the red link is obtained by sliding a $+1$-framed unknot over a $-1$-framed unknot, while it maps the right-hand side of Figure \ref{non-degeneracy_factorizable} to $\zeta \cdot \id_V$. \qedhere
 
 \begin{figure}[ht]
  \centering
   \includegraphics{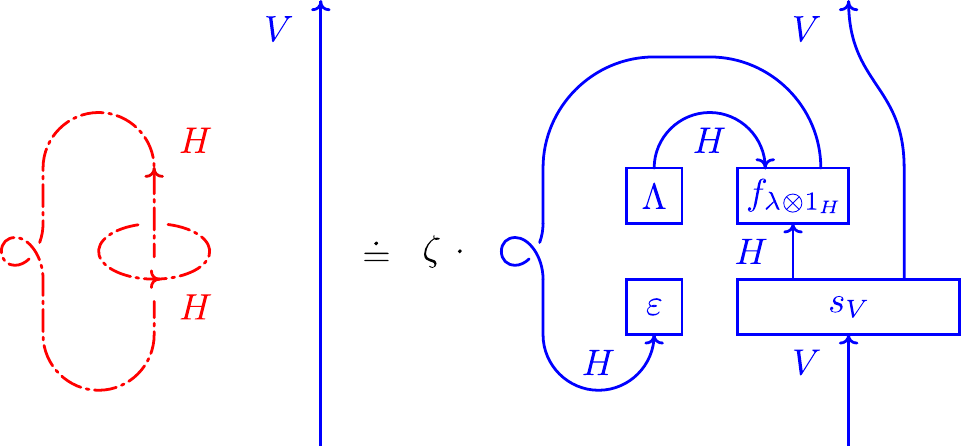}
   \caption{Skein equivalence witnessing $\Delta_- \Delta_+ = \zeta$.}
   \label{non-degeneracy_factorizable}
 \end{figure}
\end{proof}

\subsection{Cobordism category and universal construction}\label{admissible_cobordisms}

In this subsection we introduce the cobordism category we will work with and we apply the universal construction 
of \cite{BHMV95} to obtain a functorial extension of the invariant $\Hm$. To do this, we first
fix some terminology. At the beginning of Subsection
\ref{string_link_graphs} we introduced \textit{bichrome graphs}, which
are ribbon graphs with edges divided into two groups, red and blue,
and with special coupons satisfying
a certain condition concerning the partition of edges.  Now we need to
extend the definition to a more general setting.  A
\textit{$\calC$-colored blue set $P$ inside a surface $\Sigma$} is a
discrete set of blue points of $\Sigma$ endowed with orientations,
framings and colors given by objects of $\calC$.  A
\textit{$\calC$-colored bichrome graph $T$ inside a 3-dimensional
  cobordism $M$} is a $\calC$-colored bichrome graph embedded inside $M$
whose boundary vertices are given by 
$\calC$-colored blue sets inside the boundary of the cobordism.
We can now define the symmetric monoidal category $\Cob_{\calC}$.

An \textit{object $\bbSigma$ of $\Cob_{\calC}$} is a triple $(\Sigma,P,\calL)$ where: 
\begin{enumerate}
 \item $\Sigma$ is a closed surface;
 \item $P \subset \Sigma$ is a $\calC$-colored blue set;
 \item $\calL \subset H_1(\Sigma;\R)$ is a Lagrangian subspace.
\end{enumerate}

A \textit{morphism $\bbM : \bbSigma \rightarrow \bbSigma'$ of $\Cob_{\calC}$}
is an equivalence class of triples $(M,T,n)$ where:
\begin{enumerate}
 \item $M$ is a 3-dimensional cobordism from $\Sigma$ to $\Sigma'$;
 \item $T \subset M$ is a $\calC$-colored bichrome graph from $P$ to $P'$;
 \item $n \in \Z$ is a signature defect.
\end{enumerate}
Two triples $(M,T,n)$ and $(M',T',n')$ are equivalent if $n = n'$ and 
if there exists an isomorphism of cobordisms $f : M \rightarrow M'$ satisfying $f(T) = T'$.

The \textit{identity morphism $\id_{\bbSigma} : \bbSigma \rightarrow \bbSigma$ associated with an object
$\bbSigma = (\Sigma,P,\calL)$ of $\Cob_{\calC}$} is the equivalence class of the triple
\[
 (\Sigma \times I,P \times I,0).
\]

The \textit{composition $\bbM' \circ \bbM : \bbSigma \rightarrow \bbSigma''$ of morphisms 
$\bbM' : \bbSigma' \rightarrow \bbSigma''$, $\bbM : \bbSigma \rightarrow \bbSigma'$ of $\Cob_{\calC}$}
is the equivalence class of the triple
\[
 \left( M \cup_{\Sigma'} M', T \cup_{P'} T', n + n' - \mu(M_* \calL,\calL',M'^* \calL'') \right)
\]
for the Lagrangian subspaces
\begin{gather*}
 M_*\calL := \{ x' \in H_1(\Sigma';\R) \mid i_{M_+ *} x' \in i_{M_- *} (\calL) \} \subset H_1(\Sigma';\R) \\
 M'^*\calL'' := \{ x' \in H_1(\Sigma';\R) \mid i_{M'_- *} x' \in i_{M'_+ *}(\calL'') \} \subset H_1(\Sigma';\R).
\end{gather*}
where
\[
 i_{M_-} : \Sigma \hookrightarrow M, \quad i_{M_+} : \Sigma' \hookrightarrow M, \quad
 i_{M'_-} : \Sigma' \hookrightarrow M', \quad i_{M'_+} : \Sigma'' \hookrightarrow M'
\]
are the embeddings induced by the structure of the cobordisms $M$ and $M'$. 
Here $\mu$ denotes the Maslov index, see \cite{T94} for a detailed account of its properties.

The \textit{unit of $\Cob_{\calC}$} is the unique object whose surface is empty, and it will be denoted $\varnothing$.
The \textit{tensor product $\bbSigma \otimes \bbSigma'$ 
of objects $\bbSigma$, $\bbSigma'$ of $\Cob_{\calC}$} is 
the triple 
\[
 (\Sigma \sqcup \Sigma',P \sqcup P',\calL + \calL').
\]
The \textit{tensor product $\bbM \otimes \bbM' : \bbSigma \otimes \bbSigma' \rightarrow \bbSigma'' \otimes \bbSigma'''$ 
of morphisms $\bbM : \bbSigma \rightarrow \bbSigma''$, $\bbM' : \bbSigma' \rightarrow \bbSigma'''$ of $\Cob_{\calC}$} is 
the equivalence class of the triple
\[
 (M \sqcup M',T \sqcup T',n+n').
\]

We will now construct a TQFT extending the renormalized Hennings invariant $\Hm$. Its domain however will not be the whole 
symmetric monoidal category $\Cob_{\calC}$, as there is no way of defining $\Hm$ for every closed morphism of $\Cob_{\calC}$.
Indeed, we will have to consider a strictly smaller subcategory. We define 
$\adCob_{\calC}$ to be the symmetric monoidal subcategory of $\Cob_{\calC}$ having the same objects but featuring only morphisms 
$\bbM = (M,T,n)$ which satisfy the following condition: every connected component of $M$ disjoint from the incoming boundary contains an
admissible $\calC$-colored bichrome subgraph of $T$.

We can now extend the renormalized Hennings invariant to closed morphisms of $\adCob_{\calC}$ by setting
\[
 \Hm(\bbM) := \delta^n\Hm(M,T)
\]
for every closed connected morphism $\bbM = (M,T,n)$ and then by setting
\[
 \Hm(\bbM_1 \otimes \ldots \otimes \bbM_k) := \prod_{i=1}^k \Hm(\bbM_i)
\]
for every tensor product of closed connected morphisms $\bbM_1, \ldots, \bbM_k$.

\begin{remark}
 It is clear that this definition only works for closed morphisms of $\adCob_{\calC}$, as in general closed morphisms of 
 $\Cob_{\calC}$ do not feature admissible $\calC$-colored bichrome graphs.
\end{remark}

We apply now the universal construction of \cite{BHMV95}, which allows a functorial extension of $\Hm$.  
If $\bbSigma$ is an object of $\adCob_{\calC}$ then
let $\calV(\bbSigma)$ be the free vector space generated by the set of morphisms 
$\bbM_{\bbSigma} : \varnothing \rightarrow \bbSigma$ of $\adCob_{\calC}$, 
and let $\calV'(\bbSigma)$ be the free vector space generated by the set of morphisms 
$\bbM'_{\bbSigma} : \bbSigma \rightarrow \varnothing$ of $\adCob_{\calC}$.
Consider the bilinear form
\[
 \begin{array}{rccc}
  \langle \cdot , \cdot \rangle_{\bbSigma} : & \calV'(\bbSigma) \times \calV(\bbSigma) & \rightarrow & \Bbbk \\
  & (\bbM'_{\bbSigma},\bbM_{\bbSigma}) & \mapsto & \Hm(\bbM'_{\bbSigma} \circ \bbM_{\bbSigma}).
 \end{array}
\]

Let $\rmV_{\calC}(\bbSigma)$ be the quotient of the vector space 
$\calV(\bbSigma)$ with respect to the right radical 
of the bilinear form $\langle \cdot , \cdot \rangle_{\bbSigma}$, and similarly let $\rmV'_{\calC}(\bbSigma)$ be the 
quotient of the vector space 
$\calV'(\bbSigma)$ with respect to the left radical 
of the bilinear form $\langle \cdot , \cdot \rangle_{\bbSigma}$.
Then the pairing $\langle \cdot , \cdot \rangle_{\bbSigma}$ induces a non-degenerate pairing 
\[
 \langle \cdot , \cdot \rangle_{\bbSigma} :  \rmV'_{\calC}(\bbSigma) \otimes \rmV_{\calC}(\bbSigma) \rightarrow \Bbbk. 
\]

Now if $\bbM : \bbSigma \rightarrow \bbSigma'$ is a morphism of $\adCob_{\calC}$, then let 
$\rmV_{\calC}(\bbM)$ be the linear map defined by
\[
 \begin{array}{rccc}
  \rmV_{\calC}(\bbM) : & \rmV_{\calC}(\bbSigma) & \rightarrow & \rmV_{\calC}(\bbSigma') \\
  & [\bbM_{\bbSigma}] & \mapsto & [\bbM \circ \bbM_{\bbSigma}],
 \end{array}
\]
and similarly let $\rmV'_{\calC}(\bbM)$ be the linear map defined by
\[
 \begin{array}{rccc}
  \rmV'_{\calC}(\bbM) : & \rmV'_{\calC}(\bbSigma') & \rightarrow & \rmV'_{\calC}(\bbSigma) \\
   & [\bbM'_{\bbSigma'}] & \mapsto & [\bbM'_{\bbSigma'} \circ \bbM].
 \end{array}
\]

The construction we just provided clearly defines functors
\[
 \rmV_{\calC} : \adCob_{\calC} \rightarrow \Vect_{\Bbbk}, \quad \rmV'_{\calC} : \adCob_{\calC}^{\op} \rightarrow \Vect_{\Bbbk}.
\]

\begin{proposition}\label{lax_monoidality}
 The natural transformation $\mu : \otimes \circ \rmV_{\calC} \Rightarrow \rmV_{\calC} \circ \otimes$ 
 associating with every pair of objects
 $\bbSigma$, $\bbSigma'$ of $\adCob_{\calC}$ the linear map 
 \[
  \begin{array}{rccc}
   \mu_{\bbSigma,\bbSigma'} : & \rmV_{\calC}(\bbSigma) \otimes \rmV_{\calC}(\bbSigma') & \rightarrow & 
   \rmV_{\calC}(\bbSigma \otimes \bbSigma') \\
   & [\bbM_{\bbSigma}] \otimes [\bbM_{\bbSigma'}] & \mapsto & [\bbM_{\bbSigma} \otimes \bbM_{\bbSigma'}]
  \end{array}
 \]
 is monic.
\end{proposition}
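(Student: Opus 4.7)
The plan is to exploit the non-degeneracy of the induced pairing $\brk{\cdot,\cdot}_{\bbSigma}$ together with the multiplicativity of $\Hm$ under disjoint unions of closed cobordisms in order to produce enough separating functionals on $\rmV_{\calC}(\bbSigma) \otimes \rmV_{\calC}(\bbSigma')$.

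First I would pick an element $x = \sum_{i=1}^r [\bbM_{\bbSigma}^i] \otimes [\bbM_{\bbSigma'}^i]$ in $\ker \mu_{\bbSigma,\bbSigma'}$ and, without loss of generality, assume that the classes $[\bbM_{\bbSigma}^i] \in \rmV_{\calC}(\bbSigma)$ are linearly independent. The claim then reduces to showing that each $[\bbM_{\bbSigma'}^i]$ vanishes in $\rmV_{\calC}(\bbSigma')$.

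Next, since the induced pairing $\brk{\cdot,\cdot}_{\bbSigma} \colon \rmV'_{\calC}(\bbSigma) \times \rmV_{\calC}(\bbSigma) \to \Bbbk$ is non-degenerate and the family $\bset{[\bbM_{\bbSigma}^i]}_{i=1}^r$ is linearly independent, I can choose a dual family $\bbM'_1,\ldots,\bbM'_r \in \calV'(\bbSigma)$ satisfying $\brk{\bbM'_i,[\bbM_{\bbSigma}^j]}_{\bbSigma} = \delta_{ij}$. For any test element $\bbM'' \in \calV'(\bbSigma')$, I would then form the tensor product $\bbM'_i \otimes \bbM'' \in \calV'(\bbSigma \otimes \bbSigma')$; to ensure this really is a morphism in $\adCob_{\calC}$, I first need to verify the admissibility condition on components disjoint from $\bbSigma \sqcup \bbSigma'$, which is immediate since each such component comes entirely from $\bbM'_i$ or from $\bbM''$, both of which are admissible by hypothesis.

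Expanding by bilinearity and using the built-in multiplicativity $\Hm(\bbN \otimes \bbN') = \Hm(\bbN) \Hm(\bbN')$ on closed morphisms, together with the fact that $(\bbM'_i \otimes \bbM'') \circ (\bbM_{\bbSigma}^j \otimes \bbM_{\bbSigma'}^j)$ equals the disjoint union of $\bbM'_i \circ \bbM_{\bbSigma}^j$ and $\bbM'' \circ \bbM_{\bbSigma'}^j$ by functoriality of the tensor product, I obtain
\[
 \brk{\bbM'_i \otimes \bbM'',\, \mu_{\bbSigma,\bbSigma'}(x)}_{\bbSigma \otimes \bbSigma'} = \sum_{j=1}^r \brk{\bbM'_i,[\bbM_{\bbSigma}^j]}_{\bbSigma} \cdot \brk{\bbM'',[\bbM_{\bbSigma'}^j]}_{\bbSigma'} = \brk{\bbM'',[\bbM_{\bbSigma'}^i]}_{\bbSigma'}.
\]
Since $\mu_{\bbSigma,\bbSigma'}(x) = 0$, the left-hand side vanishes for every choice of $\bbM''$, so each $[\bbM_{\bbSigma'}^i]$ lies in the right radical of $\brk{\cdot,\cdot}_{\bbSigma'}$ and is therefore zero in $\rmV_{\calC}(\bbSigma')$ by definition.

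Beyond this essentially formal dualization argument, the main technical point is to confirm the key identity above, which hinges on the multiplicativity of $\Hm$ under disjoint unions; this is not really an obstacle since $\Hm(\bbN_1 \otimes \cdots \otimes \bbN_k) := \prod_i \Hm(\bbN_i)$ is part of the very definition of the extension of $\Hm$ to closed morphisms of $\adCob_{\calC}$. The only genuinely topological check is the admissibility of $\bbM'_i \otimes \bbM''$, which, as noted, follows at once from the fact that connectedness cannot mix pieces of $\bbM'_i$ with pieces of $\bbM''$ inside the disjoint union.
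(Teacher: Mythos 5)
Your argument is correct and follows essentially the same route as the paper's proof: pair the kernel element against split covectors of the form $\bbM'_{\bbSigma} \otimes \bbM'_{\bbSigma'}$, use the multiplicativity of $\Hm$ on disjoint unions together with the interchange law, and conclude from the non-degeneracy of the induced pairings. The only difference is that you spell out the final linear-algebra step (choosing a dual family to a linearly independent set of classes $[\bbM_{\bbSigma}^i]$), which the paper leaves implicit, and you rightly note the admissibility check for the split covectors.
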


\begin{proof}
 A trivial vector in $\rmV_{\calC}(\bbSigma \otimes \bbSigma')$ of the form $\sum_{i=1}^m \alpha_i \cdot [\bbM_{\bbSigma} \otimes \bbM_{\bbSigma'}]$ satisfies
 \[
  \sum_{i=1}^m \alpha_i \Hm(\bbM'_{\bbSigma \otimes \bbSigma'} \circ (\bbM_{\bbSigma} \otimes \bbM_{\bbSigma'})) = 0
 \]
 for every vector $[\bbM'_{\bbSigma \otimes \bbSigma'}]$ of $\rmV'_{\calC}(\bbSigma \otimes \bbSigma')$. In particular its pairing 
 with every vector of the form $[\bbM'_{\bbSigma} \otimes \bbM'_{\bbSigma'}]$ of $\rmV'_{\calC}(\bbSigma \otimes \bbSigma')$
 for some $[\bbM'_{\bbSigma}]$ in $\rmV'_{\calC}(\bbSigma)$ and for some $[\bbM'_{\bbSigma'}]$ in $\rmV'_{\calC}(\bbSigma')$ must be zero
 too. This means $\sum_{i=1}^m \alpha_i \cdot [\bbM_{\bbSigma}] \otimes [\bbM_{\bbSigma'}]$ is a trivial vector in $\rmV_{\calC}(\bbSigma) \otimes \rmV_{\calC}(\bbSigma')$.
\end{proof}

\subsection{Surgery axioms}\label{surgery_axioms_section}
In this subsection we study the behaviour of $\Hm$  
under decorated index $k$ surgery for $k=0,1,2$. In order to do this, we first introduce this topological operation.
For every $k = 0,1,2$ the \textit{index $k$ surgery surface} is 
the object $\bbSigma_k$ of $\Cob_{\calC}$ given by
\begin{gather*}
 \bbSigma_0 := \left( S^{-1} \times S^{3}, \varnothing, \{ 0 \} \right) = \varnothing, \\
 \bbSigma_1 := \left( S^{0} \times S^{2}, P_{\Sigma_1}, \{ 0 \} \right), \\
 \bbSigma_2 := \left( S^{1} \times S^{1}, \varnothing, \calL_{\Sigma_2} \right)
\end{gather*}
with the convention $S^{-1} := \varnothing$, where the $H$-colored blue ribbon set $P_{\Sigma_1}$ 
is given by $S^0 \times \{ (0,0,1) \}$ with orientation induced by $S^0$ and with framing obtained by pulling back a non-trivial 
tangent vector to $(0,0,1)$ along the projection onto the second factor of $S^{0} \times S^{2}$, 
and where the Lagrangian subspace $\calL_{\Sigma_2}$ is generated by the homology class of the curve $\{ (1,0) \} \times S^1$.

For every $k = 0,1,2$ the \textit{index $k$ attaching tube} is 
the morphism $\bbA_k : \varnothing \rightarrow \bbSigma_k$ of $\Cob_{\calC}$ given by
\begin{gather*}
 \bbA_0 := (S^{-1} \times \overline{D^{4}},\varnothing,0) = \id_{\varnothing}, \\
 \bbA_1 := (S^{0} \times D^{3},T_{A_1},0), \\
 \bbA_2 := (S^{1} \times \overline{D^{2}},K_{A_2},0)
\end{gather*}
where the $\calC$-colored blue ribbon graph $T_{A_1}$ is represented in Figure \ref{index_1_attaching} and
where the $H$-colored red knot $K_{A_2}$ is given by $S^1 \times \{ (0,0) \}$ with orientation induced by $S^1$
and with framing obtained by pulling back a non-trivial tangent vector to $(0,0)$ along the projection onto the second factor of 
$S^{1} \times \overline{D^{2}}$.

\begin{figure}[htb]
 \centering
 \includegraphics{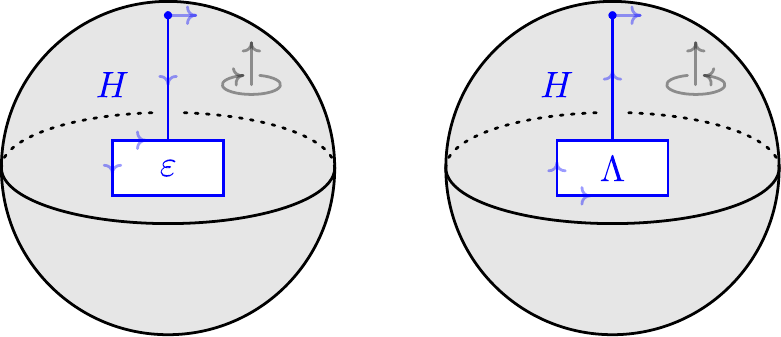}
 \caption{The morphism $\bbA_1$. Arrows on horizontal boundaries of coupons 
 are directed according to the orientations of bases, while arrows on vertical boundaries are directed from bottom bases to top bases.}
 \label{index_1_attaching}
\end{figure}

For every $k = 0,1,2$ the \textit{index $k$ belt tube} is 
the morphism $\bbB_k : \varnothing \rightarrow \bbSigma_k$ of $\Cob_{\calC}$ given by
\begin{gather*}
 \bbB_0 := (D^{0} \times S^{3},T_{B_0},0), \\
 \bbB_1 := (D^{1} \times S^{2},T_{B_1},0), \\
 \bbB_2 := (D^{2} \times S^{1},\varnothing,0)
\end{gather*}
with the convention $D^{0} := \{ 0 \}$, 
where the $\calC$-colored blue ribbon graph $T_{B_0}$ is represented in Figure \ref{index_0_belt},
where the $H$-colored blue tangle $T_{B_1}$ is given by $D^1 \times \{ (0,0,1) \}$ with orientation induced by 
$D^1$ and with framing obtained by pulling back a non-trivial 
tangent vector to $(0,0,1)$ along the projection onto the second factor of $D^{1} \times S^{2}$.

\begin{figure}[htb]
 \centering
 \includegraphics{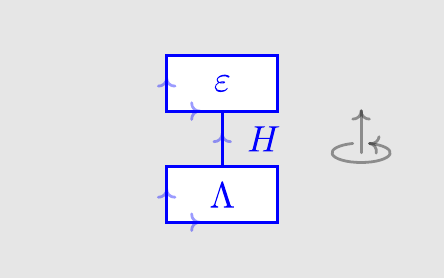}
 \caption{The morphism $\bbB_0$.}
 \label{index_0_belt}
\end{figure}

For $k \in \{0,1,2\}$ and for a morphism $\bbM_k : \bbSigma_k \rightarrow \varnothing$ of $\adCob_{\calC}$  
the morphism $\bbM_k \circ \bbB_k$ is said to be obtained from $\bbM_k \circ \bbA_k$ by an \emph{index $k$ surgery}.

\begin{proposition}\label{surgery_axioms_proposition}
 For $k \in \{0,1,2\}$ let $\bbM_k : \bbSigma_k \rightarrow \varnothing$ be a morphism of $\adCob_{\calC}$.  
 If $\bbM_k \circ \bbA_k$ is in $\adCob_{\calC}$ then
 \[
  \Hm(\bbM_k \circ \bbB_k) = \lambda_k \Hm(\bbM_k \circ \bbA_k)
 \]
 with $\lambda_0 = \lambda_1^{-1} = \lambda_2 = \calD^{-1}$.  
\end{proposition}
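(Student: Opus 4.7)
The plan is to treat the three cases separately since they rest on genuinely different arguments, with $k=1$ being the main obstacle.

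For $k=0$, since $\bbA_0 = \id_\varnothing$ we have $\bbM_0 \circ \bbA_0 = \bbM_0$ and $\bbM_0 \circ \bbB_0 = \bbM_0 \sqcup \bbB_0$. Using the multiplicativity of $\Hm$ on disjoint unions (a direct consequence of the tensor product definition, reducing at the level of surgery links via Proposition \ref{P:FF'}), it is enough to verify $\Hm(\bbB_0) = \calD^{-1}$. Since $\bbB_0 = (S^3, T_{B_0}, 0)$ admits the empty surgery link ($\ell = 0$, $\sigma = 0$), this amounts to $F'_\lambda(T_{B_0}) = 1$. Inspecting Figure \ref{index_0_belt}, one takes an obvious cutting presentation of the $H$-colored blue loop obtained by separating its $\Lambda$- and $\epsilon$-coupons; this yields the endomorphism $\Lambda \circ \epsilon$ of $H$, whose modified trace, by Theorem \ref{T:BBGa}, is $\lambda(g\Lambda) = \epsilon(g)\lambda(\Lambda) = 1$.

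For $k=2$, the red $H$-colored core curve $K_{A_2}$ of $\bbA_2 = S^1 \times \overline{D^2}$ is $0$-framed by construction (the framing is induced by the projection onto $\overline{D^2}$ of a fixed nonzero tangent vector at the origin). Dehn surgery along $K_{A_2}$ replaces $S^1 \times \overline{D^2}$ with $D^2 \times S^1$, hence transforms $\bbM_2 \circ \bbA_2$ into $\bbM_2 \circ \bbB_2$. Starting from any surgery presentation $L \subset S^3$ of $\bbM_2 \circ \bbA_2$ in which $K_{A_2}$ lies inside a ball disjoint from $L$, the link $L \cup K_{A_2}$ presents $\bbM_2 \circ \bbB_2$. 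The linking matrix acquires a block-diagonal zero entry, so $\sigma$ is unchanged while the component count increases by one; the definition of $\Hm$ gives the factor $\calD^{-1}$.

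The case $k=1$ is the hard one because topology and graph change simultaneously: $\bbM_1 \circ \bbB_1$ differs from $\bbM_1 \circ \bbA_1$ by attaching a $3$-dimensional $1$-handle that connects the two balls of $\bbA_1$, and the graph passes from the two capped $H$-arcs $T_{A_1}$ to the single $H$-arc $T_{B_1}$ running through the tube. My strategy is to present $\bbM_1 \circ \bbB_1$ via the same surgery link as $\bbM_1 \circ \bbA_1$, augmented by a $0$-framed red unknot $U$ whose $0$-surgery realizes the $1$-handle, and which encircles the place where the coupons of $T_{A_1}$ used to be, so that simultaneously $T_{A_1}$ is replaced by a single $H$-colored blue arc threaded through $U$. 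The extra component contributes $\calD^{-1}$ to the normalization. Then I apply Lemma \ref{red_turns_blue_lemma} to the red cycle $U$ and the projective $H$-colored blue strand it encircles: this preserves $F_\lambda$ and replaces $U$ with a pair of coupons $f_{\lambda \otimes 1_H}$ and $\lev_H$ on the blue strand, producing a graph which, by identities for $\lambda$, $\Lambda$, $g$ and the Radford map, is shown to equal the original $T_{A_1}$ configuration up to the factor $\calD^2 = \Delta_+ \Delta_-$ (a skein computation of the same flavour as in the proof of Corollary \ref{non-degeneracy_factorizable_lemma}). Combining the $\calD^{-1}$ from the normalization with the $\calD^2$ from the skein manipulation gives $\lambda_1 = \calD$.

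The principal difficulty is the explicit bookkeeping in case $k=1$: one must simultaneously perform Kirby-style surgery moves and skein moves on the bichrome graph, and verify that the two sides of the equality match exactly. The signature-defect terms $n$ also need tracking, but for the cases here the relevant Maslov indices vanish (either because $H_1$ of the gluing surface is trivial, as for $\bbSigma_1$, or because the Lagrangians involved are easy to identify explicitly), so they do not contribute additional factors beyond those listed.
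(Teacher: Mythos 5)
Your $k=0$ computation agrees with the paper, and for the connected case of $k=1$ your plan (realize the $1$-handle by a $0$-framed red unknot threaded by the blue $H$-colored strand, then remove it at the cost of a factor $\zeta=\Delta_+\Delta_-=\calD^{2}$, which against the extra $\calD^{-1}$ in the normalization gives $\lambda_1=\calD$) is essentially the paper's argument, which invokes the red-meridian cutting property of Lemma \ref{cutting_lemma} together with Corollary \ref{non-degeneracy_factorizable_lemma} rather than Lemma \ref{red_turns_blue_lemma}; the two routes are interchangeable here. However, two steps fail as stated. For $k=2$ you assert that one may choose a surgery presentation in which $K_{A_2}$ is a $0$-framed unknot lying in a ball disjoint from $L$. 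This is false in general: $K_{A_2}$ is the core of the solid torus glued in by $\bbA_2$, so under a surgery presentation of $\bbM_2\circ\bbA_2$ its image in $S^3$ is a specific knot which may be knotted, may link $L$, and whose product framing need not be the $0$-framing in $S^3$ (take $\bbM_2$ to be a knot exterior, suitably decorated: then $K_2$ is that knot and the empty link $L$ already presents the manifold). Consequently $\sigma(L\cup K_2)$ need not equal $\sigma(L)$, and the Maslov-index corrections do not vanish; the actual mechanism, which your argument skips, is that the signature defect of $\bbM_2\circ\bbB_2$ equals $n+\sigma(L\cup K_2)-\sigma(L)$, so the $\delta$-powers cancel exactly against the change of signature and only the factor $\calD^{-1}$ coming from the extra surgery component survives.

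For $k=1$ your argument also covers only the case where both feet of the $1$-handle lie in the same connected component of $\bbM_1\circ\bbA_1$. Nothing in the statement forces the cobordism $M_1$ to be connected: if one sphere of $\bbSigma_1$ lies in each of two components, then $\bbM_1\circ\bbA_1$ is a disjoint union of two closed pieces while $\bbM_1\circ\bbB_1$ is their connected sum with the blue $H$-strand running through the separating sphere, and its natural surgery presentation is the split union $L\sqcup L'$ with \emph{no} extra $0$-framed unknot (inserting one would produce an unwanted $S^1\times S^2$ summand). In that case the factor $\calD$ arises from comparing $\calD^{-1-\ell-\ell'}$ with $\calD^{-1-\ell}\calD^{-1-\ell'}$, and the substantive point is that $F'_{\lambda}$ of the joined graph factors as the product of the two invariants; the paper proves this by cutting along the $H$-strand and using that $\Hom_{\calC}(\one,H)$ and $\Hom_{\calC}(H,\one)$ are one-dimensional together with $\rmt_H(\Lambda\circ\epsilon)=1$. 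This case is absent from your proposal and needs its own argument.
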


\begin{proof}
 If $k = 0$ then the property reduces to the computation
 \[
  \Hm(\bbB_0) = \calD^{-1 - 0} \delta^{0 - 0} F'_{\lambda}(T_{B_0}) = \calD^{-1} \rmt_H(\Lambda \circ \epsilon) = \calD^{-1}.
 \]
 
 If $k = 1$ then we have two cases, according to whether or not the surgery involves two different connected components of the closed morphism. Let us start from the first case, and let us begin by 
 decomposing $\bbSigma_1$ as a tensor product $\smash{\overline{\bbS^2_{(-,H)}}} \otimes \bbS^2_{(+,H)}$ and by decomposing $\bbA_1$ as a tensor product $\smash{\overline{\bbD^3_{\epsilon}}} \otimes \bbD^3_{\Lambda}$ with respect to the morphisms $\smash{\overline{\bbD^3_{\epsilon}} : \varnothing \rightarrow \overline{\bbS^2_{(-,H)}}}$ and $\bbD^3_{\Lambda} :  \varnothing \rightarrow \bbS^2_{(+,H)}$ represented in the left-hand part and in the right-hand part of Figure \ref{index_1_attaching} respectively.
 Let us consider connected morphisms $\bbM_1 : \smash{\overline{\bbS^2_{(-,H)}}} \rightarrow \varnothing$ and $\bbM'_1 : \bbS^2_{(+,H)} \rightarrow \varnothing$ of $\adCob_{\calC}$.  
 If $L = L_1 \cup \ldots \cup L_{\ell}$ is a surgery link for $\smash{M_1 \cup_{\overline{S^2}} \overline{D^3}}$ and 
 if $L' = L'_1 \cup \ldots \cup L'_{\ell'}$ is a surgery link for $M'_1 \cup_{S^2} D^3$ then
 \begin{gather*}
  \Hm(\bbM_1 \circ \overline{\bbD^3_{\epsilon}}) = \calD^{-1-\ell} \delta^{n - \sigma(L)} F'_{\lambda}(L \cup T), \\
  \Hm(\bbM'_1 \circ \bbD^3_{\Lambda}) = \calD^{-1-\ell'} \delta^{n' - \sigma(L')} F'_{\lambda}(L' \cup T').
 \end{gather*}
 If $(L \cup T)_H : \varnothing \rightarrow (+,H)$ and $(L' \cup T')_H : (+,H) \rightarrow \varnothing$ are morphisms of
 $\calR_{\lambda}$ satisfying 
 \begin{gather*}
  L \cup T = T_{\epsilon} \circ (L \cup T)_H, \\
  L' \cup T' = (L' \cup T')_H \circ T_{\Lambda},
 \end{gather*}
 for the elementary morphisms $T_{\epsilon} : (+,H) \rightarrow \varnothing$ and $T_{\Lambda} : \varnothing \rightarrow (+,H)$ of
 $\calR_{\lambda}$ featuring a single blue strand and a single blue coupon with colors specified by the subscripts, then $(L \cup T)_H \circ T_{\epsilon}$ is a cutting presentation for $L \cup T$, and $T_{\Lambda} \circ (L' \cup T')_H$ is a cutting presentation for $L' \cup T'$. This means
 \begin{gather*}
  F'_{\lambda}(L \cup T) = \rmt_H \left( F_{\lambda} \left( (L \cup T)_H \circ T_{\epsilon} \right) \right), \\
  F'_{\lambda}(L' \cup T') = \rmt_H \left( F_{\lambda} \left( T_{\Lambda} \circ (L' \cup T')_H \right) \right).
 \end{gather*}
 Now we remark that 
 \[
  \Hm((\bbM_1 \otimes \bbM'_1) \circ \bbB_1) = \calD^{-1-\ell-\ell'} \delta^{n + n' - \sigma(L) - \sigma(L')} 
  F'_{\lambda} \left( (L' \cup T')_H \circ (L \cup T)_H \right),
 \]
 and that 
 \begin{gather*}
  F_{\lambda} \left( (L \cup T)_H \right) = \rmt_H \left( F_{\lambda} \left( (L \cup T)_H \circ T_{\epsilon} \right) \right) \cdot \Lambda, \\
  F_{\lambda} \left( (L' \cup T')_H \right) = \rmt_H \left( F_{\lambda} \left( T_{\Lambda} \circ (L' \cup T')_H \right) \right) \cdot \varepsilon,
 \end{gather*}
 because $\Hom_{\calC}(\one,H)$ and $\Hom_{\calC}(H,\one)$ are 1-dimensional. This means
 \begin{align*}
  F'_{\lambda} &\left( (L' \cup T')_H\circ (L \cup T)_H \right) \\
  &= \rmt_H \left( F_{\lambda} \left( (L \cup T)_H \circ (L' \cup T')_H \right) \right) \\
  &= \rmt_H \left( F_{\lambda} \left( (L \cup T)_H \circ T_{\epsilon} \right) \right) \
  \rmt_H \left( F_{\lambda} \left( T_{\Lambda} \circ (L' \cup T')_H \right) \right) \
  \rmt_H \left( \Lambda \circ \varepsilon \right) \\
  &= F'_{\lambda}(L \cup T) \ F'_{\lambda}(L' \cup T').
 \end{align*}
 But now
 \begin{align*}
  \Hm((\bbM_1 \otimes \bbM'_1) \circ \bbB_1) &= \calD^{-1-\ell-\ell'} \delta^{n + n' - \sigma(L) - \sigma(L')} 
  F'_{\lambda}((L \cup T)_H \circ (L' \cup T')_H) \\
  &= \calD^{-1-\ell-\ell'} \delta^{n + n' - \sigma(L) - \sigma(L')} 
  F'_{\lambda}(L \cup T) \ F'_{\lambda}(L' \cup T') \\
  &= \calD \ \Hm(\bbM_1 \circ\overline{\bbD^3_{\epsilon}}) \ \Hm(\bbM'_1 \circ \bbD^3_{\Lambda}) \\
  &= \calD \ \Hm((\bbM_1 \otimes \bbM'_1) \circ \bbA_1).
 \end{align*}

 Now let us move on to the second case, and let us consider a connected morphism $\bbM_1 : \bbSigma_1 \rightarrow \varnothing$ of $\adCob_{\calC}$. If $L = L_1 \cup \ldots \cup L_{\ell}$ is a surgery link for $M_1 \cup_{(S^{0} \times S^{2})} (S^{0} \times D^{3})$
 then
 \[
  \Hm(\bbM_1 \circ \bbSigma_1) = \calD^{-1-\ell} \delta^{n - \sigma(L)} F'_{\lambda}(L \cup T).
 \]
 If $(L \cup T)_H : (+,H) \rightarrow (+,H)$ is a morphism of 
 $\calR_{\lambda}$ satisfying 
 \[
  L \cup T = T_{\epsilon} \circ (L \cup T)_H \circ T_{\Lambda},
 \]
 then $T_{\Lambda} \circ T_{\epsilon} \circ (L \cup T)_H$ is a cutting presentation for $L \cup T$.
 This means
 \[
  F'_{\lambda}(L \cup T) = \rmt_H \left( F_{\lambda} \left( T_{\Lambda} \circ T_{\epsilon} \circ (L \cup T)_H \right) \right).
 \]
 Now remark that
 \[
  \Hm(\bbM_1 \circ \bbB_1) = \calD^{-1-(\ell+1)} \delta^{n - \sigma(L)} F'_{\lambda}(K \cup L \cup \hat{T})
 \]
 where the admissible $\calC$-colored bichrome graph $K \cup L \cup \hat{T}$ is represented in Figure \ref{F:proof_1-surgery}.
 
 \begin{figure}[ht]
  \centering
  \includegraphics{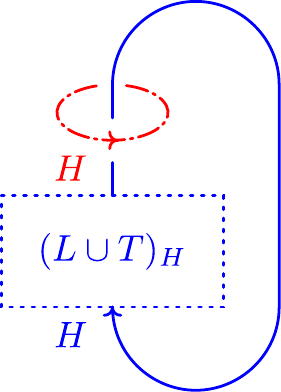}
  \caption{The admissible $\calC$-colored bichrome graph $K \cup L \cup \hat{T}$.}
  \label{F:proof_1-surgery}
 \end{figure}
 
 But now, thanks to Lemma \ref{cutting_lemma}, we have
 \[
  F'_{\lambda} \left( K \cup L \cup \hat{T} \right) = 
  \zeta \ F'_{\lambda}(L \cup T).
 \]
 This means
 \begin{align*}
  \Hm(\bbM_1 \circ \bbB_1) &= \calD^{-2-\ell} \delta^{n - \sigma(L)} F'_{\lambda}(K \cup L \cup \hat{T}) \\
  &= \zeta \calD^{-2-\ell} \delta^{n - \sigma(L)} F'_{\lambda}(L \cup T) \\
  &= \calD^{-\ell} \delta^{n - \sigma(L)} F'_{\lambda}(L \cup T) \\
  &= \calD \ \Hm(\bbM_1 \circ \bbA_1).
 \end{align*}

 If $k = 2$ let us consider a connected morphism
 $\bbM_2 : \bbSigma_2 \rightarrow \varnothing$ of $\adCob_{\calC}$.
 If $L = L_1 \cup \ldots \cup L_{\ell}$ is a surgery link for $M_2 \cup_{(S^1 \times S^1)} (S^{1} \times \overline{D^{2}})$ 
 then $L \cup K_2$ is a surgery link for $M_2 \cup_{(S^1 \times S^1)} (D^{2} \times S^{1})$, where $K_2$ denotes the pull back 
 of the $H$-colored red knot coming from $\bbA_2$ to $S^3$. Now if the signature defect of $\bbM_2 \circ \bbA_2$ is $n$, then the 
 signature defect of $\bbM_2 \circ \bbA_2$ is $n + \sigma(L \cup K_2) - \sigma(L)$. Therefore
 \begin{gather*}
  \Hm(\bbM_2 \circ \bbA_2) = \calD^{-1 - \ell} \delta^{n - \sigma(L)} F'_{\lambda}(L \cup K_2), \\
  \Hm(\bbM_2 \circ \bbB_2) = \calD^{-1 - (\ell + 1)} \delta^{(n + \sigma(L \cup K_2) - \sigma(L)) - 
  \sigma(L \cup K_2)} F'_{\lambda}(L \cup K_2).
 \end{gather*}
\end{proof}

\subsection{Consequences of skein equivalence and surgery axioms} 

In this subsection we establish some useful properties of the functor $\rmV_{\calC}$ which will be used for 
the proof of its monoidality and for the computation of its image. Loosely speaking, they can be summarized as follows:
 \begin{enumerate}
\item If $\bbSigma$ is an object of $\adCob_{\calC}$ then $\rmV_{\calC}(\bbSigma)$ is generated by graphs inside a fixed connected cobordism;
 \item To test if a vector $[\bbM]$ in $\rmV_{\calC}(\bbSigma)$ is trivial it is enough to pair it with all covectors in 
  $\rmV'_{\calC}(\bbSigma)$ whose support is given by a fixed connected cobordism.
\end{enumerate}

For every $k \geq 0$ let us consider a standard embedding
$f_k : D^3 \hookrightarrow \R^2 \times I$ mapping the point
$(\cos(\frac{t}{k+1} \pi),0,\sin(\frac{t}{k+1} \pi)) \in D^3$ to the
point $((t,0),1) \in \R^2 \times I$ 
for every $t \in [0,k+1]$.  
Then if
$(\underline{\epsilon},\underline{V}) =
((\epsilon_1,V_1),\ldots,(\epsilon_k,V_k))$
is an object of $\calR_{\calC}$ we can use the embedding $f_k$ to
define by pull back a standard $\calC$-colored blue set
$P_{(\underline{\epsilon},\underline{V})}$ inside $S^2$.  Let
$\bbS^2_{(\underline{\epsilon},\underline{V})}$ denote the object of
$\adCob_{\calC}$ given by
\[
 (S^2,P_{(\underline{\epsilon},\underline{V})},\{ 0 \}).
\]
We can now generalize the notion of skein equivalence we gave in Subsection \ref{pairing_section} for morphisms of
$[n]\calR_{\lambda}$.
Indeed, we say two formal linear combinations $\sum_{i=1}^m \alpha_i \cdot T_i$ and $\sum_{i'=1}^{m'} \alpha'_{i'} \cdot T'_{i'}$
of $\calC$-colored bichrome graphs inside $D^3$ from $\varnothing$ to $P_{(\underline{\epsilon},\underline{V})}$
are \textit{skein equivalent} if 
\[
 \sum_{i=1}^m \alpha_i \cdot f_k(T_i) \doteq \sum_{i'=1}^{m'} \alpha'_{i'} \cdot f_k(T'_{i'})
\]
in 
$\Hom_{\calR_{\lambda}}([0]\varnothing,[0](\underline{\epsilon},\underline{V}))$.
Now let $\bbSigma = (\Sigma,P,\calL)$ be an object of $\adCob_{\calC}$, let 
$M$ be a connected 3-dimensional cobordism from $\varnothing$ to $\Sigma$ and let us fix 
an isomorphism of cobordisms $f_M : M \rightarrow D^3 \cup_{S^2} \hat{M}$ for some cobordism $\hat{M}$ 
from $S^2$ to $\Sigma$. 
In general, we say two linear combinations 
of $\calC$-colored bichrome graphs inside $M$ from $\varnothing$ to $P$
are \textit{skein equivalent} if, up to isotopy, their images under $f_M$
are of the form $\sum_{i=1}^m \alpha_i \cdot ( T_i \cup \hat{T} )$ and $\sum_{i'=1}^{m'} \alpha'_{i'} \cdot ( T'_{i'} \cup \hat{T})$
for some object $(\underline{\epsilon},\underline{V})$ of $\calR_{\calC}$,
for some $\calC$-colored bichrome graph $\hat{T}$ inside $\hat{M}$ from $P_{(\underline{\epsilon},\underline{V})}$ to $P$, and for some
skein equivalent linear combinations
\[
 \sum_{i=1}^m \alpha_i \cdot T_i \doteq \sum_{i'=1}^{m'} \alpha'_{i'} \cdot T'_{i'}
\]
of $\calC$-colored bichrome graphs inside $D^3$ from $\varnothing$ to $P_{(\underline{\epsilon},\underline{V})}$.

If $\bbSigma = (\Sigma,P,\calL)$ is an object of $\adCob_{\calC}$ and 
$M$ is a connected 3-dimensional cobordism from $\varnothing$ to $\Sigma$,
we denote with $\calV(M;\bbSigma)$ the vector space generated by isotopy classes of admissible
$\calC$-colored bichrome graphs inside $M$ from $\varnothing$ to $P$.

\begin{proposition}\label{connection+skein}
 If $\bbSigma = (\Sigma,P,\calL)$ is an object of $\adCob_{\calC}$ and if $M$ is a connected 
 3-dimensional cobordism from $\varnothing$ to $\Sigma$ then the linear map 
 \[
  \begin{array}{rccc}
   \pi_{\bbSigma} : & \calV(M;\bbSigma) & \rightarrow & \rmV_{\calC}(\bbSigma) \\
   & T & \mapsto & [M,T,0]
  \end{array}
 \]
 is surjective, and skein equivalent vectors of $\calV(M;\bbSigma)$ have the same image in $\rmV_{\calC}(\bbSigma)$.
\end{proposition}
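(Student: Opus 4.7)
The plan has two parts matching the two assertions.

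For surjectivity I would start with an arbitrary generator $[(M_0,T_0,n_0)]$ of $\rmV_{\calC}(\bbSigma)$ and show it is a nonzero scalar multiple of a class $[(M,T,0)]$ with $T$ admissible inside the fixed cobordism $M$. The first step is absorbing the signature defect: for every test morphism $(M'',T'',n'') : \bbSigma \to \varnothing$ in $\adCob_{\calC}$, the composition rule for morphisms of $\Cob_{\calC}$ together with the definition $\Hm(\bbM) = \delta^n \Hm(M,T)$ gives
\[
\langle [(M'',T'',n'')], [(M_0,T_0,n_0)] \rangle_{\bbSigma} = \delta^{n_0} \langle [(M'',T'',n'')], [(M_0,T_0,0)] \rangle_{\bbSigma},
\]
so $[(M_0,T_0,n_0)] = \delta^{n_0} [(M_0,T_0,0)]$ in $\rmV_{\calC}(\bbSigma)$, which is well-defined since $\delta \neq 0$ by non-degeneracy. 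The second step is a Lickorish--Wallace argument for cobordisms with fixed boundary: since $M$ and $M_0$ are both connected with $\partial = \Sigma$, there is a framed link $L \subset M$ with $\ell$ components such that Dehn surgery on $L$ produces a cobordism diffeomorphic to $M_0$. After an isotopy I can assume the graph corresponding to $T_0$ lies in a copy $\tilde T_0 \subset M$ disjoint from $L$. Iterating Proposition \ref{surgery_axioms_proposition} for $k=2$ on each closed cobordism $M'' \cup_{\Sigma} M_0$ rewrites its $\Hm$-value as $\calD^{-\ell}\delta^{n'}$ times the $\Hm$-value of $M'' \cup_{\Sigma} M$ decorated with $\tilde T_0$ together with the red $H$-colored copy of $L$, where $n'$ records the Maslov-index shifts produced by the surgeries. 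Since $T_0$ admissible forces $\tilde T_0$ admissible, $\tilde T_0 \cup L_{\mathrm{red},H}$ lies in $\calV(M;\bbSigma)$ and witnesses surjectivity up to the scalar $\delta^{n_0+n'}\calD^{-\ell}$.

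For the skein-equivalence assertion, let $\sum_i \alpha_i T_i \doteq \sum_{i'} \alpha'_{i'} T'_{i'}$ in $\calV(M;\bbSigma)$. By definition this means there is an isomorphism $f_M : M \to D^3 \cup_{S^2} \hat M$, a common $\hat T \subset \hat M$, and graphs $T_i^{\mathrm{in}}, T_{i'}'^{\mathrm{in}} \subset D^3$ with $f_M(T_i) = T_i^{\mathrm{in}} \cup \hat T$, $f_M(T'_{i'}) = T_{i'}'^{\mathrm{in}} \cup \hat T$, and
\[
\sum_i \alpha_i F_{\lambda}(f_k(T_i^{\mathrm{in}})) = \sum_{i'} \alpha'_{i'} F_{\lambda}(f_k(T_{i'}'^{\mathrm{in}})).
\]
To prove $\sum_i \alpha_i [M,T_i,0] = \sum_{i'}\alpha'_{i'} [M,T'_{i'},0]$ in $\rmV_{\calC}(\bbSigma)$, I would pair against an arbitrary $[(M'',T'',n'')]$ and pick a surgery presentation $L \subset S^3$ of $M'' \cup_{\Sigma} M$ arranged so that $f_M(D^3)$ sits as a standard ball in $S^3$ disjoint from $L$. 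Choosing a projective blue edge in $\hat T$ or $T''$ (which exists by admissibility) as the site of a cutting presentation, the evaluation $F'_{\lambda}(L \cup T'' \cup T_i^{\mathrm{in}} \cup \hat T)$ factors as the composition in $\calR_{\lambda}$ of a fixed morphism with $F_{\lambda}(f_k(T_i^{\mathrm{in}}))$, so summing over $i$ and invoking the skein identity yields the same value as summing over $i'$. Therefore all pairings agree and the two classes coincide in $\rmV_{\calC}(\bbSigma)$.

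The main obstacle I expect is in the surjectivity step. Topologically one needs a precise statement that any two connected cobordisms from $\varnothing$ to $\Sigma$ are related by a sequence of index $2$ surgeries keeping the graph disjoint from the surgery locus; this is a standard consequence of relative handle theory but must be cited carefully. The more delicate bookkeeping is the Maslov-index correction in the iterated surgery formula, which is what lets us exchange a surgery presentation for the correct power of $\delta$ in $\rmV_{\calC}(\bbSigma)$. The skein-equivalence step is conceptually simpler once one has verified that the HRT functor genuinely factors through the inclusion of the $D^3$-part into the cutting presentation, which in turn relies on our choice of admissible projective blue edge being located outside $D^3$.
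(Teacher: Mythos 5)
Your skein-equivalence half is essentially the paper's argument (the paper treats it even more briefly, as an immediate consequence of the fact that $\Hm$ is computed through $F_{\lambda}$ applied to a fixed exterior composed with the part of the graph inside the ball), and your closing caveat about where the cutting edge sits is at the same level of rigor as the paper's own treatment. The genuine gap is in the surjectivity half: you write ``since $M$ and $M_0$ are both connected with $\partial=\Sigma$'' and then run a Lickorish--Wallace argument, but the generators of $\calV(\bbSigma)$ are arbitrary morphisms $\varnothing\rightarrow\bbSigma$ of $\adCob_{\calC}$, and their underlying cobordisms need not be connected: they may have closed components disjoint from $\Sigma$ (and $\Sigma$ itself may be disconnected). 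For such a generator no framed link inside the connected cobordism $M$ can present $M_0$, because surgery along a link in a connected $3$-manifold always yields a connected $3$-manifold, so your second step simply does not apply and the missing case cannot be absorbed into the same argument.

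The paper closes exactly this case by a preliminary reduction that your proposal omits: using admissibility of every component, one performs the ``projective trick'' (a skein equivalence along a projective blue edge, as in Figure \ref{projective_trick}, made possible by choosing a section of $\epsilon\otimes\id_V$ and a retraction of $\Lambda\otimes\id_V$, i.e.\ by projectivity and injectivity of the colors) to make one component carry an $\epsilon$-colored coupon and another a $\Lambda$-colored coupon, and then invokes the index $1$ surgery axiom (Proposition \ref{surgery_axioms_proposition} with $k=1$) to merge the components at the cost of a nonzero scalar. Only after this connectedness reduction does the index $2$ surgery step (which you do have, with the correct $\calD^{-\ell}\delta^{n'}$ bookkeeping) finish the proof. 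So your outline recovers the paper's argument for connected generators, but the disconnected case is a missing idea, not a routine detail: it is precisely where the admissibility hypothesis on $\adCob_{\calC}$ and the $k=1$ surgery axiom are used.
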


\begin{proof}
 First of all we remark that if we have a skein equivalence
 \[
  \sum_{i=1}^m \alpha_i \cdot T_i \doteq \sum_{i'=1}^{m'} \alpha'_{i'} \cdot T'_{i'}
 \]
 between vectors of $\calV(M;\bbSigma)$, then 
 \[
  \sum_{i=1}^m \alpha_i \Hm( \bbM'_{\bbSigma} \circ (M,T_i,0) ) = 
  \sum_{i'=1}^{m'} \alpha'_{i'} \Hm( \bbM'_{\bbSigma} \circ (M,T'_{i'},0) )
 \]
 for every morphism $\bbM'_{\bbSigma} : \bbSigma \rightarrow \varnothing$ of $\adCob_{\calC}$.
 This follows directly from the very definition of $\Hm$ in terms of the Hennings-Reshetikhin-Turaev functor $F_{\lambda}$.
 Therefore skein equivalent vectors of $\calV(M;\bbSigma)$ have the same image in $\rmV_{\calC}(\bbSigma)$.
 
 What we just proved implies in particular, up to skein equivalence, we can assume every connected component of every vector in $\calV(\bbSigma)$ features an $\epsilon$-colored coupon, or a $\Lambda$-colored coupon, or both. In order to show this, the idea is to use the properties of projective objects of $\calC$. Indeed, if $V$ is a projective $H$-module, then we can always find a section $s_V: V \rightarrow H \otimes V$ for the epimorphism $\epsilon \otimes \id_V : H \otimes V \rightarrow V$, i.e. an $H$-module morphism satisfying $(\epsilon \otimes \id_V) \circ s_V = \id_V$, just like we did for turning red components blue in Section \ref{S:skein_equivalence}. Remark that, thanks to the pivotal structure of $\calC$, projective $H$-modules are also injective, and we can always find a retraction $r_V: H \otimes V \rightarrow V$ for the monomorphism $\Lambda \otimes \id_V : V \rightarrow H \otimes V$, i.e. an $H$-module morphism satisfying $r_V \circ (\Lambda \otimes \id_V) = \id_V$. This means that every time a vector of $\calV(\bbSigma)$ features a blue edge colored with some projective object $V$, we can replace a small portion of it with one of the $\calC$-colored bichrome graphs represented in Figure \ref{projective_trick} without altering the vector in the 
 quotient $\rmV_{\calC}(\bbSigma)$. We call this operation \textit{projective trick}, and we will use it in the following argument.

 \begin{figure}[hbt]
  \centering
  \includegraphics{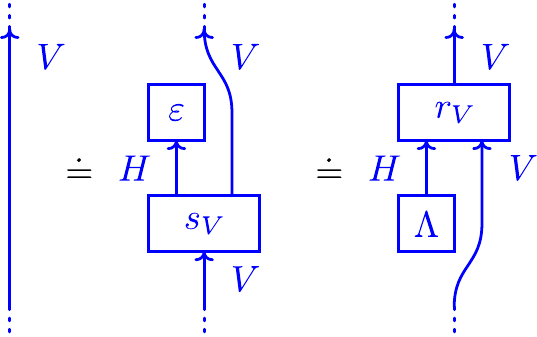}
  \caption{Projective trick along a $V$-colored blue edge.}
  \label{projective_trick}
 \end{figure}

 Now, in order to prove that $\pi_{\bbSigma}$ is surjective, we have to show that for every vector
 $[M_{\Sigma},T,n] \in \rmV_{\calC}(\bbSigma)$
 there exist admissible $\calC$-colored bichrome graphs $T_1, \ldots, T_m \subset M$ 
 and coefficients $\alpha_1, \ldots, \alpha_m \in \Bbbk$ such that
 \[
  \sum_{i = 1}^m \alpha_i \cdot [M,T_i,0] = [M_{\Sigma},T,n].
 \]
 We do this in two steps.
 First, we can assume that $M_{\Sigma}$ is connected: 
 indeed every time we have distinct connected components we can suppose, 
 up to skein equivalence, one of them contains an $\epsilon$-colored coupon while the other one contains
 a $\Lambda$-colored coupon. Then, thanks to Proposition \ref{surgery_axioms_proposition},
 the 1-surgery connecting them will determine a vector of $\rmV_{\calC}(\bbSigma)$ which is a non-zero scalar multiple of 
 $[M_{\Sigma},T,n]$. Second, assuming now $M_{\Sigma}$ is connected, we know there exists 
 a surgery link $L = L_1 \cup \ldots \cup L_{\ell}$ for $M_{\Sigma}$ inside $M$. Then,
 thanks to Proposition \ref{surgery_axioms_proposition} with $k = 2$, there exists some signature defect $n' \in \Z$ such that 
 \[
  [M_{\Sigma},T,n] = \lambda_2^{\ell} \cdot [M,L \cup T,n'] = \lambda_2^{\ell} \delta^{n'} \cdot 
  [M,L \cup T,0]
 \]
 where, once again, we adopt a slightly abusive notation for the pull back of the $\calC$-colored bichrome graph $T$ along 
 the embedding of the exterior of $L$ into $M_{\Sigma}$.
\end{proof}

If $\bbSigma = (\Sigma,P,\calL)$ is an object of $\adCob_{\calC}$ and 
$M'$ is a connected 3-dimensional cobordism from $\Sigma$ to $\varnothing$
then we denote with $\calV'(M';\bbSigma)$ the vector space generated by isotopy classes of
$\calC$-colored bichrome graphs inside $M'$ from $P$ to $\varnothing$.

\begin{proposition}\label{triviality}
 If $\bbSigma = (\Sigma,P,\calL)$ is a connected object of $\adCob_{\calC}$ and if $M'$ is a 
 connected 3-dimensional cobordism from $\Sigma$ to $\varnothing$ then a vector 
 $\sum_{i=1}^m \alpha_i \cdot [\bbM_{i,\bbSigma}]$ in $\rmV_{\calC}(\bbSigma)$ is trivial if and only if
 \[
  \sum_{i=1}^m \alpha_i \Hm\left( (M',T',0) \circ \bbM_{i,\bbSigma} \right) = 0
 \]
 for every $T'$ in $\calV'(M';\bbSigma)$.
\end{proposition}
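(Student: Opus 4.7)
The ``only if'' direction is immediate from the definition of $\rmV_\calC(\bbSigma)$ as the quotient of $\calV(\bbSigma)$ by the right radical of $\langle\cdot,\cdot\rangle_\bbSigma$, since morphisms of the form $(M',T',0)$ with $T' \in \calV'(M';\bbSigma)$ are a subset of $\calV'(\bbSigma)$. For the ``if'' direction, I will show that for every $\bbM'_\bbSigma = (M'',T'',n'')$ in $\adCob_\calC(\bbSigma,\varnothing)$ there exist $T' \in \calV'(M';\bbSigma)$ and a scalar $\kappa$ depending only on $\bbM'_\bbSigma$ such that
\[
\Hm(\bbM'_\bbSigma \circ \bbM_{i,\bbSigma}) = \kappa \cdot \Hm\bigl((M',T',0) \circ \bbM_{i,\bbSigma}\bigr)
\]
for every $i$. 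The hypothesis then forces $\sum_i \alpha_i \Hm(\bbM'_\bbSigma \circ \bbM_{i,\bbSigma}) = 0$, so $\sum_i \alpha_i [\bbM_{i,\bbSigma}]$ lies in the right radical of $\langle\cdot,\cdot\rangle_\bbSigma$ and is trivial in $\rmV_\calC(\bbSigma)$.

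First, I reduce to the case where $M''$ is connected. Because $\bbSigma$ is connected, $\Sigma$ lies in a unique connected component $M''_0$ of $M''$, while all other components $M''_1,\ldots,M''_k$ are closed and admissible. Writing $\bbM'_\bbSigma = \bbM'_0 \otimes \bbM'_1 \otimes \cdots \otimes \bbM'_k$, with the signature defect $n''$ absorbed into any one tensor factor, the multiplicativity of $\Hm$ on tensor products of closed connected morphisms gives
\[
\Hm(\bbM'_\bbSigma \circ \bbM_{i,\bbSigma}) = \left( \prod_{j=1}^k \Hm(\bbM'_j) \right) \Hm(\bbM'_0 \circ \bbM_{i,\bbSigma}),
\]
so the scalar prefactor, being independent of $i$, lets us assume without loss of generality that $M''$ is connected.

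Second, with $M''$ connected, both $M''$ and $M'$ are connected compact oriented $3$-cobordisms from $\Sigma$ to $\varnothing$. By the relative Lickorish--Wallace theorem, there exists a framed link $L = L_1 \cup \cdots \cup L_\ell$ in the interior of $M'$ and a boundary-preserving diffeomorphism from the surgery manifold $M'_L$ to $M''$; under this identification $T''$ can be isotoped to lie in the exterior of $L$ in $M'$. Colouring each $L_j$ red with label $H$ and applying the index-$2$ surgery axiom of Proposition \ref{surgery_axioms_proposition} once per component of $L$ yields
\[
\Hm(\bbM'_0 \circ \bbM_{i,\bbSigma}) = \calD^{-\ell} \delta^{\nu} \Hm\bigl((M', L \cup T'', 0) \circ \bbM_{i,\bbSigma}\bigr)
\]
for an integer $\nu$ depending on $n''$, $\ell$, and the signature corrections produced by the surgeries, but not on $i$. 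Admissibility of the closed composites, required to invoke the surgery axiom, is inherited from $\bbM_{i,\bbSigma}$: the component containing $\Sigma$ carries an admissible blue subgraph which is untouched by the surgery and remains in the merged connected component of the composite. Setting $T' := L \cup T''$ completes the argument.

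The main obstacle is the topological input of the second step: invoking the relative version of the Lickorish--Wallace theorem to realise $M''$ as $2$-surgery on a framed link inside the prescribed cobordism $M'$. Once this is granted, the remainder of the proof is a bookkeeping exercise combining the multiplicativity of $\Hm$ with the surgery axioms already established in Proposition \ref{surgery_axioms_proposition}.
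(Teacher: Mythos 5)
Your proof is correct and follows essentially the same route as the paper: first reduce to a connected covector using multiplicativity of $\Hm$ with respect to disjoint union, then trade a surgery presentation of $M''$ for a red link inside $M'$ via the index $2$ surgery axiom of Proposition \ref{surgery_axioms_proposition}, exactly as in the proof of Proposition \ref{connection+skein}. Your write-up is in fact more explicit than the paper's own two-line argument, including the signature-defect bookkeeping and the admissibility checks.
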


\begin{proof}
 Connected morphisms of $\adCob_{\calC}$ from $\bbSigma$ to $\varnothing$ 
 are sufficient in order to detect non-triviality of vectors of $\rmV_{\calC}(\bbSigma)$ because
 $\Hm$ is multiplicative with respect to disjoint union. Then, just like in the proof of 
 Proposition \ref{connection+skein}, we can trade index 2 surgery for red links inside $M'$. 
\end{proof}

\subsection{Monoidality}

We use the results of the previous two subsections in order to prove that the functor
$\rmV_{\calC} : \adCob_{\calC} \rightarrow \Vect_{\Bbbk}$ is a TQFT.

\begin{theorem}
 The natural transformation $\mu : \otimes \circ \rmV_{\calC} \Rightarrow \rmV_{\calC} \circ \otimes$ 
 associating with every pair of objects
 $\bbSigma$, $\bbSigma'$ of $\adCob_{\calC}$ the linear map 
 \[
  \begin{array}{rccc}
   \mu_{\bbSigma,\bbSigma'} : & \rmV_{\calC}(\bbSigma) \otimes \rmV_{\calC}(\bbSigma') & \rightarrow & 
   \rmV_{\calC}(\bbSigma \otimes \bbSigma') \\
   & [\bbM_{\bbSigma}] \otimes [\bbM_{\bbSigma'}] & \mapsto & [\bbM_{\bbSigma} \otimes \bbM_{\bbSigma'}]
  \end{array}
 \]
 is an isomorphism.
\end{theorem}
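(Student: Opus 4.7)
Injectivity of $\mu_{\bbSigma,\bbSigma'}$ is already established by Proposition \ref{lax_monoidality}, so it remains to prove surjectivity. The plan is to combine the freedom to choose the supporting cobordism afforded by Proposition \ref{connection+skein} with the 1-surgery axiom of Proposition \ref{surgery_axioms_proposition}, using skein equivalence to bring every vector into a form where its two constituent parts can be separated.

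First, I fix connected cobordisms $M_{\bbSigma} : \varnothing \to \Sigma$ and $M_{\bbSigma'} : \varnothing \to \Sigma'$, remove small open 3-balls $B_{\bbSigma} \subset \mathrm{int}(M_{\bbSigma})$ and $B_{\bbSigma'} \subset \mathrm{int}(M_{\bbSigma'})$, and glue in a 1-handle $h \cong D^1 \times S^2$ along the two resulting $S^2$ boundary components. The outcome is a connected cobordism $M_0 : \varnothing \to \Sigma \sqcup \Sigma'$. By Proposition \ref{connection+skein}, every $v \in \rmV_{\calC}(\bbSigma \otimes \bbSigma')$ may be represented as $[M_0, T, 0]$ for some admissible $\calC$-colored bichrome graph $T \subset M_0$.

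Using the skein equivalences of Subsection \ref{S:skein_equivalence} together with the projective trick from the proof of Proposition \ref{connection+skein}, I next modify $T$ within its class in $\rmV_{\calC}(\bbSigma \otimes \bbSigma')$ so that $T \cap h$ coincides with the graph $T_{B_1}$ of $\bbB_1$, namely a single blue $H$-colored edge parallel to the core of $h$. With this normalization, $(M_0, T, 0)$ decomposes as $\mathcal{N} \circ \bbB_1$, where $\mathcal{N} : \bbSigma_1 \to \bbSigma \otimes \bbSigma'$ is the cobordism $(M_{\bbSigma} \setminus \mathrm{int}(B_{\bbSigma})) \sqcup (M_{\bbSigma'} \setminus \mathrm{int}(B_{\bbSigma'}))$ carrying the graph $T \setminus h$. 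Pairing $v$ against an arbitrary covector $[\bbM'] \in \rmV'_{\calC}(\bbSigma \otimes \bbSigma')$, the composite $\bbM' \circ \mathcal{N}$ is an admissible morphism $\bbSigma_1 \to \varnothing$, and Proposition \ref{surgery_axioms_proposition} with $k=1$ then yields
\[
 \langle [\bbM'], v \rangle = \Hm(\bbM' \circ \mathcal{N} \circ \bbB_1) = \calD \cdot \Hm(\bbM' \circ \mathcal{N} \circ \bbA_1) = \calD \cdot \langle [\bbM'], [\bbM_{\bbSigma} \otimes \bbM_{\bbSigma'}] \rangle,
\]
where $\bbM_{\bbSigma}$ and $\bbM_{\bbSigma'}$ are produced from $\mathcal{N}$ by capping off its two $S^2$ boundary components with the 3-balls of $\bbA_1$ decorated with the $\epsilon$- and $\Lambda$-colored coupons respectively; both morphisms are admissible because the regular representation $H$ is projective. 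Non-degeneracy of $\langle \cdot , \cdot \rangle_{\bbSigma \otimes \bbSigma'}$ now forces $v = \calD \cdot \mu_{\bbSigma,\bbSigma'}([\bbM_{\bbSigma}] \otimes [\bbM_{\bbSigma'}])$, so $v \in \mathrm{im}(\mu_{\bbSigma,\bbSigma'})$.

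The hard part is the normalization step: given an arbitrary admissible $T \subset M_0$, producing a representative of $[M_0, T, 0]$ in $\rmV_{\calC}(\bbSigma \otimes \bbSigma')$ whose intersection with $h$ is exactly the blue $H$-edge of $\bbB_1$ requires a careful combination of the projective trick (to route an auxiliary $H$-edge through $h$ without altering the class) with the skein relations of Subsection \ref{S:skein_equivalence} (to absorb the original content of $T \cap h$ into the two sides). The admissibility hypothesis on $T$ and the projectivity of $H$ are precisely what make these rearrangements achievable in the quotient $\rmV_{\calC}(\bbSigma \otimes \bbSigma')$.
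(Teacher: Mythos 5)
Your overall skeleton coincides with the paper's: injectivity from Proposition \ref{lax_monoidality}, representing every vector on a connected cobordism obtained by joining the two pieces along a $1$-handle $h \cong D^1 \times S^2$ (Proposition \ref{connection+skein}), arranging the graph inside $h$ to be the single blue $H$-edge of $\bbB_1$, and then invoking the index $1$ surgery axiom of Proposition \ref{surgery_axioms_proposition} to trade $\bbB_1$ for $\calD \cdot \bbA_1$ and split the cobordism. The final pairing/non-degeneracy argument and the admissibility check after capping off are fine. But the step you explicitly defer as ``the hard part'' is not a technicality: it is the mathematical content of the proof, and the tools you cite for it are not the right ones. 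The projective trick only inserts $\epsilon$- and $\Lambda$-coupons along an already existing projective edge; it cannot ``absorb the original content of $T \cap h$ into the two sides,'' because strands that genuinely run through the handle cannot be pushed off it by any skein move of Subsection \ref{S:skein_equivalence} alone.

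What is actually needed, and what the paper does, is the following sequence: (i) use Lemma \ref{red_turns_blue_lemma} to ensure that only blue edges cross $h$; (ii) use admissibility to isotope a blue edge with projective color through $h$, so that after fusing all the crossing strands into a single edge its color $V$ is projective (projectives form an ideal); (iii) use the key algebraic fact that $H$ contains every indecomposable projective as a direct summand, so that there exist $f_1,\ldots,f_m \in \Hom_{\calC}(V,H)$ and $f'_1,\ldots,f'_m \in \Hom_{\calC}(H,V)$ with $\id_V = \sum_{i=1}^m f'_i \circ f_i$, and insert this factorization on the crossing edge. Only then does the vector become a linear combination of classes of graphs meeting $h$ in a single $H$-colored blue edge, i.e.\ of cobordisms factoring through $\bbB_1$ (note also that one ends with a sum, not a single graph, which your formulation glosses over). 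None of steps (i)--(iii) appears in your write-up, and in particular the factorization of $\id_V$ through copies of $H$ -- the reason the regular representation, rather than an arbitrary projective, shows up in $\bbB_1$ -- is missing. As it stands, the proposal restates the surjectivity problem rather than solving it, so the proof has a genuine gap at its central step.
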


\begin{proof}
 Thanks to Proposition \ref{lax_monoidality} we just need to prove that $\mu_{\bbSigma,\bbSigma'}$ is surjective for every
 pair of objects $\bbSigma$, $\bbSigma'$ of $\adCob_{\calC}$.
 Let $M_{\Sigma}$ be a connected cobordism from $\overline{S^2}$ to $\Sigma$ and
 let $M_{\Sigma'}$ be a connected cobordism from $S^2$ to $\Sigma'$.
 Thanks to Proposition \ref{connection+skein} we know $\rmV_{\calC}(\bbSigma \otimes \bbSigma')$ is generated by vectors of the form
 \[
  [(D^{1} \times S^{2}) \cup_{S^0 \times S^2} (M_{\Sigma} \sqcup M_{\Sigma'}),T,0]
 \]
 with $T$ a $\calC$-colored bichrome graph inside $(D^{1} \times S^{2}) \cup_{S^0 \times S^2} (M_{\Sigma} \sqcup M_{\Sigma'})$
 from $\varnothing$ to $P \sqcup P'$. Let us choose such a $T$ and let us show that the corresponding vector of
 $\rmV_{\calC}(\bbSigma \otimes \bbSigma')$ lies in the image of $\mu_{\bbSigma,\bbSigma'}$.  
 Thanks to Lemma \ref{red_turns_blue_lemma} we can suppose $D^{1} \times S^{2}$
 intersects only blue edges of $T$. Up to isotopy we can furthermore suppose $D^{1} \times S^{2}$
 intersects a projective edge of $T$. Then, up to skein equivalence, $D^{1} \times S^{2}$ intersects
 a single edge whose color $V$ is a projective object of $\calC$. 
 Therefore, there exist morphisms $f_1, \ldots, f_m$ in $\Hom_{\calC}(V,H)$ and
 $f'_1, \ldots, f'_m$ in $\Hom_{\calC}(H,V)$ satisfying
 \[
  \id_V = \sum_{i=1}^m f'_i \circ f_i.
 \]
 Indeed, $H$ splits as a direct sum with multiplicity of all the indecomposable projective modules of $\calC$. 
 This means that, up to skein equivalence,
 \[
  [(D^{1} \times S^{2}) \cup_{(S^0 \times S^2)} (M_{\Sigma} \sqcup M_{\Sigma'}),T,0]
  = \sum_{i=1}^m \left[ \left( (M_{\Sigma},T_i,0) \otimes (M_{\Sigma'},T'_i,0) \right) \circ \bbB_1 \right]
 \]
 for the index 1 belt tube $\bbB_1 : \varnothing \rightarrow \bbSigma_1$ introduced in Subsection \ref{surgery_axioms_section}.
 But Proposition \ref{surgery_axioms_proposition} with $k = 1$ yields the equality $[\bbB_1] = \calD \cdot [\bbA_1]$
 between vectors of $\rmV_{\calC}(\bbSigma_1)$, where $\bbA_1 : \varnothing \rightarrow \bbSigma_1$ is the 
 the index 1 attaching tube introduced in Subsection \ref{surgery_axioms_section}. Then we have the chain of equalities 
 \begin{align*}
  &\sum_{i=1}^m \left[ \left( (M_{\Sigma},T_i,0) \otimes (M_{\Sigma'},T'_i,0) \right) \circ \bbB_1 \right] \\
  = &\sum_{i=1}^m \alpha_i \calD \cdot \left[ \left( (M_{\Sigma},T_i,0) \otimes (M_{\Sigma'},T'_i,0) \right) \circ \bbA_1 \right] \\
  = &\sum_{i=1}^m \alpha_i \calD \cdot \left[ \left( (M_{\Sigma},T_i,0) \circ \overline{\bbD^3_{\epsilon}} \right) \otimes 
  \left( (M_{\Sigma'},T'_i,0) \circ \bbD^3_{\Lambda} \right) \right] \\
  = &\sum_{i=1}^m \alpha_i \calD \cdot \mu_{\bbSigma,\bbSigma'} 
  \left( \left[ (M_{\Sigma},T_i,0) \circ \overline{\bbD^3_{\epsilon}} \right] \otimes 
  \left[ (M_{\Sigma'},T'_i,0) \circ \bbD^3_{\Lambda} \right] \right)
 \end{align*}
 for the morphisms
 $\overline{\bbD^3_{\epsilon}} : \varnothing \rightarrow \overline{\bbS^2_{(-,H)}}$ and 
 $\bbD^3_{\Lambda} :  \varnothing \rightarrow \bbS^2_{(+,H)}$ of $\adCob_{\calC}$ represented in the left-hand part and in the 
 right-hand part of Figure \ref{index_1_attaching} respectively.
\end{proof}

\begin{remark}\label{R:Verlinde}
 As a consequence of monoidality we get a kind of Verlinde formula for dualizable surfaces: 
 if $\bbSigma = (\Sigma,P,\calL)$ is an object of $\adCob_{\calC}$ then we denote with $\overline{\bbSigma} = (\overline{\Sigma},\overline{P},\calL)$ the object obtained from $\bbSigma$ by reversing the orientation of both $\Sigma$ and $P$. If $P$ contains a point with projective color in every connected component of $\Sigma$ then $\bbSigma^* = \overline{\bbSigma}$. Duality morphisms are given by cylinders, with $\lev_{\bbSigma} : \bbSigma^* \otimes \bbSigma \rightarrow \varnothing$ and $\lcoev_{\bbSigma} : \varnothing \rightarrow \bbSigma \otimes \bbSigma^*$ both realized by the same decorated 3-manifold realizing the identity $\id_{\overline{\bbSigma}} : \overline{\bbSigma} \rightarrow \overline{\bbSigma}$, although seen as different cobordisms. Furthermore, the braiding morphism $c_{\bbSigma,\bbSigma^*} : \bbSigma \otimes \bbSigma^* \rightarrow \bbSigma^* \otimes \bbSigma$ is realized by the same decorated 3-manifold realizing the identity $\id_{\bbSigma \otimes \overline{\bbSigma}} : \bbSigma \otimes \overline{\bbSigma} \rightarrow \bbSigma \otimes \overline{\bbSigma}$ seen as a different cobordism. Therefore, if we set $\bbS^1 \times \bbSigma := \lev_{\bbSigma} \circ \ c_{\bbSigma,\bbSigma^*} \circ \lcoev_{\bbSigma}$, we get
 \begin{align*}
  \rmH'_{\calC}(\bbS^1 \times \bbSigma) &= \rmV_{\calC} \left( \lev_{\bbSigma} \circ \ c_{\bbSigma,\bbSigma^*} \circ \lcoev_{\bbSigma} \right) = \rev_{\rmV_{\calC}(\bbSigma)} \circ \ \tau \ \circ \lcoev_{\rmV_{\calC}(\bbSigma)} \\ &= \dim_{\Bbbk} \left( \rmV_{\calC}(\bbSigma) \right),
 \end{align*}
 where $\tau([\bbM_{\bbSigma}] \otimes [\bbM_{\bbSigma^*}]) := [\bbM_{\bbSigma^*}] \otimes [\bbM_{\bbSigma}]$ for every $[\bbM_{\bbSigma}] \otimes [\bbM_{\bbSigma^*}] \in \rmV_{\calC}(\bbSigma) \otimes \rmV_{\calC}(\bbSigma^*)$.
\end{remark}

\subsection{Identification of TQFT spaces}\label{S:identification_TQFT_spaces}
In this subsection we show that TQFT vector spaces can be identified with the algebraic vector spaces defined 
in Subsection \ref{pairing_section}. 
Indeed, recall that we introduced for every $g \in \N$ and for every object $V$ of $\calC$ the spaces 
\begin{gather*}
 \tilde{\calX}_{g,V} = \Hom_\calC(H,\X^{\otimes g} \otimes V), \quad
 \calX'_{g,V} = \Hom_\calC((\X^*)^{\otimes g} \otimes V,\one), \\
 \tilde{\calS}_{g,V} = \Hom_{[g]\calC}([g]H,[g]V), \quad
 \calS'_{g,V} = \Hom_{[g]\calC}([g]V,[g]\one),
\end{gather*}
as well as the quotient $\calX_{g,V}$ of $\tilde{\calX}_{g,V}$ given by the right radical of the bilinear pairing $\brk{\cdot,\cdot}_{\calX}$, and
the quotient $\calS_{g,V}$ of $\tilde{\calS}_{g,V}$ given by the right radical of the bilinear pairing $\brk{\cdot,\cdot}_{\calS}$. 
In Proposition \ref{P:HomG} we gave explicit isomorphisms
\[
 \tilde{\calX}_{g,V} \cong \tilde{\calS}_{g,V}, \quad
 \calX'_{g,V} \cong \calS'_{g,V},
\]
which, thanks to Proposition \ref{P:translation_of_pairings}, also induce explicit isomorphisms
\[
 \calX_{g,V} \cong \calS_{g,V}.
\]

Let us consider a genus $g$ Heegaard splitting $M_g \cup_{\Sigma_g} M'_g$ 
of $S^3$. Let $P_{V}$ denote a $\calC$-colored blue set 
inside $\Sigma_g$ composed of a single point with positive orientation and color $V$ and let $\overline{P}_H$
denote another 
$\calC$-colored blue set inside $\Sigma_g$ composed of a single point with negative orientation and color $H$.
Let $\calL_g$ denote the Lagrangian subspace of $H_1(\Sigma_g;\R)$
given by the kernel of the inclusion of $\Sigma_g$ into $M_g$. 
We denote with $\bbSigma_{g,V}$ the object $(\Sigma_g,P_{V},\calL_g)$
of $\adCob_{\calC}$ and with $\bbSigma_{g,V,H}$ the object $(\Sigma_g,P_{V} \cup \overline{P}_{H},\calL_g)$
of $\adCob_{\calC}$. 
Then the quotient of $\calV(M_g;\bbSigma_{g,V})$ with respect to the right radical of the bilinear form 
\[
 \begin{array}{rccc}
  \langle \cdot , \cdot \rangle_{S^3} : & 
  \calV'(M'_g;\bbSigma_{g,V}) \times 
  \calV(M_g;\bbSigma_{g,V}) & \rightarrow & \Bbbk \\
  & (T',T) & \mapsto & F'_{\lambda}(T \cup_{P_{V}} T')
 \end{array}
\]
is isomorphic to $\rmV_{\calC}(\bbSigma_{g,V})$.

Let us consider the linear map 
\[
 \begin{array}{rccc}
  \calE : & \calV(M_g;\bbSigma_{g,V,H}) & \rightarrow & \rmV_{\calC}(\bbSigma_{g,V}) \\
  & T & \mapsto & [ M_g \cup_{\Sigma_g} (I \times \Sigma_g), T \cup_{(P_V \cup \overline{P}_H)} T_{\Lambda},0]
 \end{array}
\] 
where $T_{\Lambda} \subset I \times \Sigma_g$ is the $\calC$-colored bichrome graph from 
$P_{V} \cup \overline{P}_H$ to $P_V$ represented in Figure \ref{open_skein} inside $I \times N(P_V \cup \overline{P}_H) 
\subset I \times \Sigma_g$ for a tubular neighborhood $N(P_V \cup \overline{P}_H)$ of $P_V \cup \overline{P}_H$ inside $\Sigma_g$.

\begin{figure}[htb]
 \centering
 \includegraphics{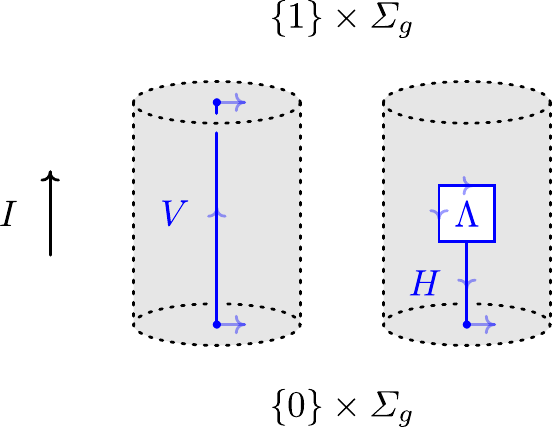}
 \caption{The $\calC$-colored bichrome graph $T_{\Lambda}$.}
 \label{open_skein}
\end{figure}

\begin{proposition}\label{open_skein_compute_TQFT}
 The linear map $\calE : \calV(M_g;\bbSigma_{g,V,H}) \rightarrow \rmV_{\calC}(\bbSigma_{g,V})$ is surjective.
\end{proposition}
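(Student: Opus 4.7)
The approach is to realize every generator of $\rmV_{\calC}(\bbSigma_{g,V})$ as $\calE(T')$ for some $T' \in \calV(M_g;\bbSigma_{g,V,H})$. By Proposition \ref{connection+skein}, every class in $\rmV_{\calC}(\bbSigma_{g,V})$ may be represented as $[M_g,T,0]$ for some admissible bichrome graph $T \in \calV(M_g;\bbSigma_{g,V})$, so the task reduces to producing, for each such $T$, an element $T' \in \calV(M_g;\bbSigma_{g,V,H})$ with $\calE(T') = [M_g,T,0]$ in the quotient $\rmV_{\calC}(\bbSigma_{g,V})$.

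To construct $T'$, I would proceed as follows. Admissibility of $T$ guarantees a blue edge $e$ of $T$ carrying a projective color $V'$. The projective trick used in the proof of Proposition \ref{connection+skein}---based on the retraction $r_{V'}$ of $\Lambda \otimes \id_{V'}$ and the section $s_{V'}$ of $\epsilon \otimes \id_{V'}$---lets us replace a small sub-arc of $e$ by a skein-equivalent local configuration featuring an auxiliary $H$-colored blue arc that terminates in a $\Lambda$-labeled coupon. The resulting graph represents the same class $[M_g,T,0]$ in $\rmV_{\calC}(\bbSigma_{g,V})$. Using the connectedness of $M_g$ and the canonical diffeomorphism $M_g \cong M_g \cup_{\Sigma_g}(I \times \Sigma_g)$, I then isotope the free end of this new $H$-edge so that it lies inside the collar $I \times N(P_V \cup \overline{P}_H)$ and matches there the local picture of $T_{\Lambda}$ drawn in Figure \ref{open_skein}. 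Cutting along $\Sigma_g$ separates the graph into a collar piece, which by construction equals $T_{\Lambda}$, and a residual graph $T' \subset M_g$ in which the auxiliary $H$-edge crosses $\Sigma_g$ transversally at $\overline{P}_H$. Since the projective edge $e$ is still present in $T'$ (only its interior has been modified), the graph $T'$ lies in $\calV(M_g;\bbSigma_{g,V,H})$, and $\calE(T') = [M_g, T' \cup_{P_V \cup \overline{P}_H} T_{\Lambda},0] = [M_g,T,0]$ by reversing the projective trick.

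The main obstacle is ensuring that the local configuration produced by the projective trick can actually be brought into precise agreement with the specific form of $T_{\Lambda}$ shown in Figure \ref{open_skein}. Depending on how the $V$-strand, the $H$-strand, and the $\Lambda$-coupon are arranged in that figure, matching them may require further skein manipulations---for instance applications of Lemma \ref{red_turns_blue_lemma} to exchange a red cycle for blue edges, or of Lemma \ref{cutting_lemma} to introduce or remove a red meridian---together with isotopies inside the collar that do not alter the boundary decoration at $P_V \cup \overline{P}_H$. Once this identification is carried out, the surjectivity of $\calE$ follows.
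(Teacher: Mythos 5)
Your proof is correct and follows essentially the same route as the paper: represent every class via Proposition \ref{connection+skein}, use the projective trick to introduce a $\Lambda$-colored coupon on an auxiliary $H$-colored edge, and isotope that configuration into the collar so that it coincides with $T_{\Lambda}$, the part remaining in $M_g$ being the desired preimage under $\calE$. The paper's own proof is just a much terser statement of exactly this argument.
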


\begin{proof}
 Every vector of $\rmV_{\calC}(\bbSigma_{g,V})$ is of the form $[ M_g \cup_{\Sigma_g} (I \times \Sigma_g), T,0]$
 for some admissible $\calC$-colored bichrome graph $T \in M_g \cup (I \times \Sigma_g)$ from $\varnothing$ to $P_V$ thanks to 
 Proposition \ref{connection+skein}. Up to skein equivalence we can suppose $T$ features a $\Lambda$-colored coupon, 
 and up to isotopy we can conclude.
\end{proof}

Let us fix now a standard embedding $\iota_g$ of ${M}_g$ into
$\R^2 \times I$ and let us consider the linear map
\[
 \Phi : \calV(M_g;\bbSigma_{g,V,H}) \rightarrow \tilde{\calS}_{g,V}
\]
sending 
an
admissible $\calC$-colored bichrome graph $T \subset M_g$ from $\varnothing$ to 
$P_{V} \cup \overline{P}_H$ to the morphism of $\calC$ 
obtained by evaluating the Hennings-Reshetikhin-Turaev functor $F_{\lambda}$ against the $\calC$-colored 
$g$-string link graph obtained as described by Figure \ref{skein_to_morphism}.

\begin{figure}[htb]
 \centering
 \includegraphics{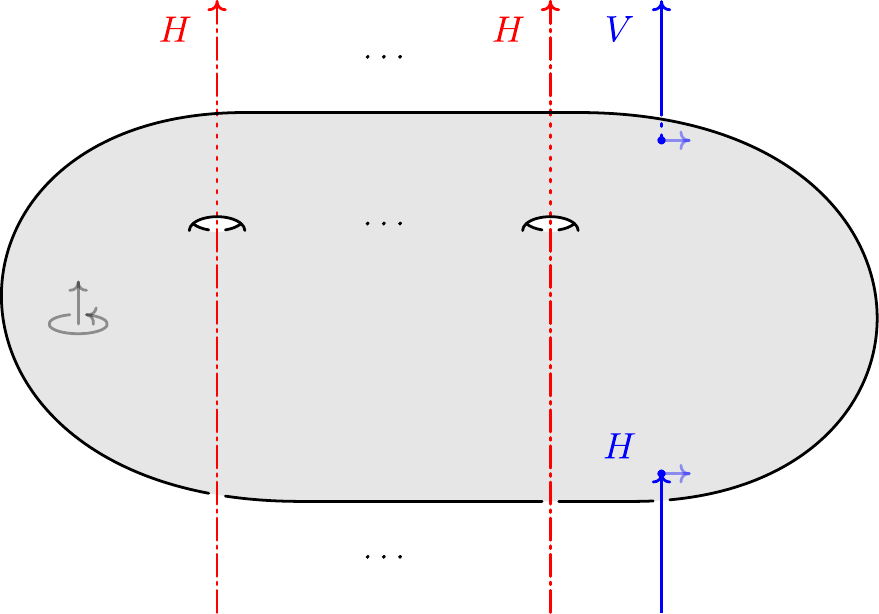}
 \caption{How to obtain a $g$-string link graph from a $\calC$-colored
   bichrome graph $T \subset M_g$ from $\varnothing$ to
   $P_{V} \sqcup \overline{P}_H$ using the standard embedding
   $\iota_g$.}
 \label{skein_to_morphism}
\end{figure}

\begin{proposition}\label{surjectivity_proposition}
 The linear map $\Phi : \calV(M_g;\bbSigma_{g,V,H}) \rightarrow \tilde{\calS}_{g,V}$ is surjective.
\end{proposition}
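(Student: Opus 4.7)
The plan is to prove surjectivity by exhibiting, for every morphism $\phi \in \tilde{\calS}_{g,V}$, an admissible $\calC$-colored bichrome graph $T \subset M_g$ from $\varnothing$ to $P_V \cup \overline{P}_H$ with $\Phi(T) = \phi$. The natural reduction is to use Proposition \ref{P:HomG} to translate the problem from the stabilized category to ordinary $\calC$: every $\phi \in \tilde{\calS}_{g,V}$ is of the form $\Theta(f)$ for a unique $f \in \tilde{\calX}_{g,V} = \Hom_{\calC}(H, \X^{\otimes g} \otimes V)$, and $\Theta(f)$ is computed explicitly as the image under $F_\lambda$ of the $g$-string link bichrome graph $G_f$ depicted in Figure \ref{isomorphism_1}, built out of a single coupon colored by $f$, $g$ auxiliary $\alpha$-colored coupons, and $g$ open red strands connecting them.

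Next I would construct, given $f$ as above, an explicit bichrome graph $T_f \subset M_g$ from $\varnothing$ to $P_V \cup \overline{P}_H$ that realizes $G_f$ geometrically. The idea is to exploit the standard handlebody structure of $M_g$: under the embedding $\iota_g : M_g \hookrightarrow \R^2 \times I$, the handlebody is presented as a 3-ball with $g$ attached 1-handles, and Figure \ref{skein_to_morphism} shows that $\Phi$ cuts each 1-handle open into two parallel red strands which become the red strands of a $g$-string link graph in $\R^2 \times I$. I would place the $f$-labeled coupon inside the 3-ball part of $M_g$, route each of its $g$ outputs of type $\X$ through a distinct 1-handle so that the cut produced by $\iota_g$ yields exactly the corresponding open red strand of $G_f$, send the remaining $V$-colored output leg to the marked point $P_V$, and attach the $H$-colored input leg (associated with the domain of $f$) to $\overline{P}_H$. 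By direct inspection of Figures \ref{isomorphism_1} and \ref{skein_to_morphism}, the processed graph produced by $\Phi(T_f)$ is, up to the skein equivalences of Subsection \ref{S:skein_equivalence}, precisely $G_f$, hence $\Phi(T_f) = F_\lambda(G_f) = \Theta(f) = \phi$.

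Admissibility of $T_f$ is automatic: the blue edge ending at $\overline{P}_H$ carries the color $H$, which is a projective $H$-module, so $T_f$ contains a projective blue edge as required by the definition of admissible bichrome graph.

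The main subtlety, and the step where I expect to spend the most care, is making the dictionary between Figure \ref{skein_to_morphism} and the handlebody/handle decomposition of $M_g$ precise enough that the equality $\Phi(T_f) = \Theta(f)$ is genuinely tautological; in particular one must check that the auxiliary $\alpha$-colored coupons appearing in Figure \ref{isomorphism_1} correspond exactly to the cutting of the 1-handles along meridians in the image of $\iota_g$, so that no extra morphisms are introduced or omitted. Once this identification is settled, surjectivity follows immediately since $\Theta$ is a bijection.
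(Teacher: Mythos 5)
Your proposal is correct and is essentially the paper's own proof: the paper defines the map $\Psi : \tilde{\calX}_{g,V} \rightarrow \calV(M_g;\bbSigma_{g,V,H})$, $f \mapsto T_f$ (Figure \ref{morphism_to_skein}), observes $\Phi \circ \Psi = \Theta$ by comparing Figures \ref{isomorphism_1} and \ref{skein_to_morphism}, and concludes surjectivity of $\Phi$ from the fact that $\Theta$ is an isomorphism — exactly your construction of $T_f$ and the identity $\Phi(T_f) = \Theta(f) = \phi$. The one inaccuracy is in your description of $\Phi$: it does not recolor or cut anything in $T$ (blue edges stay blue; the red strands of the resulting $g$-string link graph are the auxiliary strands prescribed by Figure \ref{skein_to_morphism}), so matching the processed picture with the $\alpha$-coupons of Figure \ref{isomorphism_1}, up to $F_{\lambda}$-preserving skein equivalence, is precisely the dictionary your ``main subtlety'' paragraph correctly isolates, and it does not affect the substance of the argument.
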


\begin{proof}
 Let us consider the linear map $\Psi : \tilde{\calX}_{g,V} \rightarrow \calV(M_g;\bbSigma_{g,V,H})$ sending
 every $f$ in $\tilde{\calX}_{g,V}$ to the admissible $\calC$-colored bichrome graph 
 $T_{f}$ represented in Figure \ref{morphism_to_skein}. 
 Now, using the notation of Proposition \ref{P:HomG}, we have 
 \[
  \Phi \circ \Psi = \Theta.
 \]
 Then $\Phi$ is surjective because $\Theta$ is an isomorphism. \qedhere
 
 \begin{figure}[htb]
  \centering
  \includegraphics{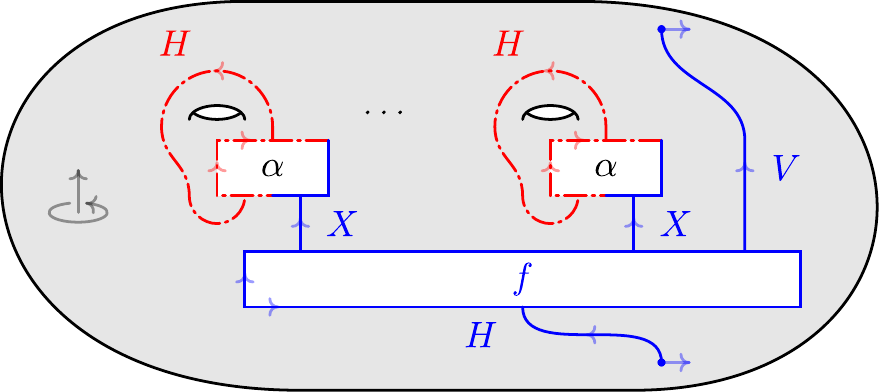}
  \caption{The admissible $\calC$-colored bichrome graph $T_{f}$.}
  \label{morphism_to_skein}
 \end{figure}
 
\end{proof}

\FloatBarrier

Let us fix now a standard embedding $\iota'_g$ of $M'_g$ into
$\R^2 \times I$ and let us consider the linear map
\[
 \Phi' : \calV'(M'_g;\bbSigma_{g,V}) \rightarrow \calS'_{g,V}
\]
sending every $\calC$-colored bichrome graph $T' \subset M'_g$ from $P_{V}$ to $\varnothing$
to the morphism of $\calC$ obtained by evaluating the Hennings-Reshetikhin-Turaev functor $F_{\lambda}$ against the $\calC$-colored 
$g$-string link graph obtained as described by Figure \ref{skein_to_morphism_prime}.

\begin{figure}[ht]
 \centering
 \includegraphics{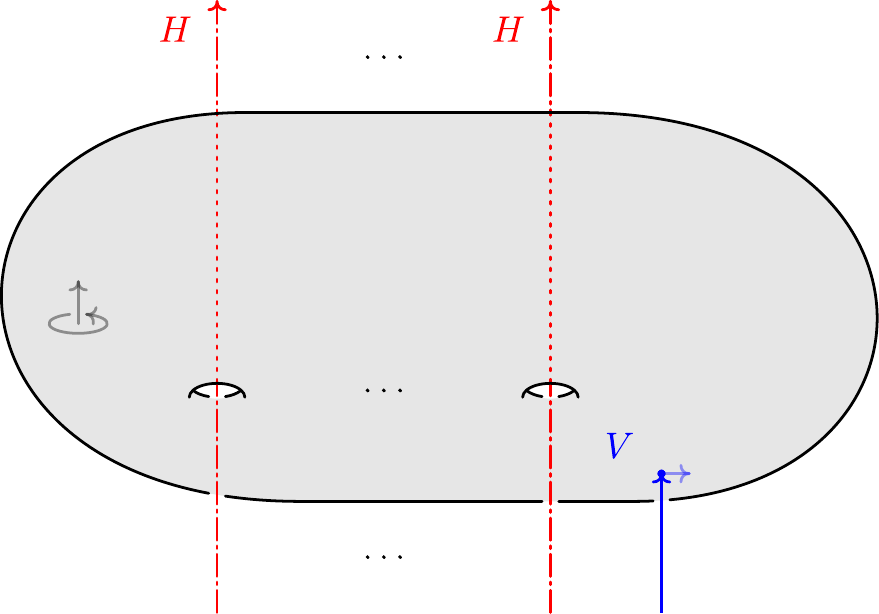}
 \caption{How to obtain a $g$-string link graph from a $\calC$-colored
 bichrome graph $T' \subset M'_g$ from $P_{V}$ to $\varnothing$ using
 the standard embedding $\iota'_g$.}
 \label{skein_to_morphism_prime}
\end{figure}

\begin{proposition}\label{surjectivity_proposition'}
 The linear map $\Phi' : \calV'(M'_g;\bbSigma_{g,V}) \rightarrow \calS'_{g,V}$ is surjective.
\end{proposition}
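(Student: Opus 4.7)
The plan is to mirror the argument given for Proposition \ref{surjectivity_proposition}. I will construct a linear map
\[
 \Psi' : \calX'_{g,V} \rightarrow \calV'(M'_g;\bbSigma_{g,V})
\]
sending every morphism $f' \in \Hom_\calC((\X^*)^{\otimes g} \otimes V,\one)$ to an explicit $\calC$-colored bichrome graph $T'_{f'} \subset M'_g$ from $P_V$ to $\varnothing$, and then verify that $\Phi' \circ \Psi' = \Theta'$, where $\Theta'$ is the explicit isomorphism of Proposition \ref{P:HomG}. Once this identity is established, the surjectivity of $\Phi'$ follows immediately because $\Theta'$ is an isomorphism.

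The design of $T'_{f'}$ is dictated by the picture used to represent $\Theta'(f')$ in Figure \ref{isomorphism_3}. Concretely, a standard embedding $\iota'_g$ realizes $M'_g$ as a regular neighborhood of a wedge of $g$ meridian disks (the cocores of the $g$ handles of $M_g$) attached to a 3-ball. The bichrome graph $T'_{f'}$ consists of a single coupon colored by $f'$ whose $g$ red input legs are joined to the boundaries of these meridian disks via $\alpha'$-type configurations (parallel copies of the red circles encircling the handles), and whose $V$-colored blue input leg is connected to the marked point $P_V$ on the Heegaard surface. When we now apply $\Phi'$ to $T'_{f'}$ by pushing everything into $\R^2\times I$ via $\iota'_g$ and evaluating via $F_\lambda$ as prescribed by Figure \ref{skein_to_morphism_prime}, the resulting $g$-string link graph is precisely the one in Figure \ref{isomorphism_3} defining $\Theta'(f')$. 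Hence $\Phi'(T'_{f'}) = \Theta'(f')$.

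The only nontrivial point, and the main place where one must be careful, is the matching between the graphical conventions of $\Phi'$ (which involves braid-closing the red cores of the handles of $M'_g$ along the standard embedding $\iota'_g$) and the $\alpha'$, $\ell_{\alpha'}$ morphisms appearing in the definition of $\Theta'$ via Lemma \ref{L:important_morphisms} and Proposition \ref{P:HomG}; this is a direct diagrammatic check entirely analogous to the one hidden behind the identity $\Phi \circ \Psi = \Theta$ in the proof of Proposition \ref{surjectivity_proposition}, and no new ingredient is required beyond the isotopy invariance of $F_\lambda$ and the definitions. With $\Phi' \circ \Psi' = \Theta'$ in hand and $\Theta'$ being an isomorphism by Proposition \ref{P:HomG}, the surjectivity of $\Phi'$ follows.
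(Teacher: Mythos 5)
Your proposal is correct and follows essentially the same route as the paper: you construct a section $\Psi' : \calX'_{g,V} \rightarrow \calV'(M'_g;\bbSigma_{g,V})$ sending $f'$ to an explicit bichrome graph in $M'_g$, check $\Phi' \circ \Psi' = \Theta'$, and conclude from the fact that $\Theta'$ is an isomorphism (Proposition \ref{P:HomG}), exactly as in the paper's proof, which mirrors that of Proposition \ref{surjectivity_proposition}.
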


\begin{proof} 
 Let us consider the linear map $\Psi' : \calX'_{g,V} \rightarrow \calV'(M'_g;\bbSigma_{g,V})$ sending
 every $f'$ in $\calX'_{g,V}$ to the admissible $\calC$-colored bichrome graph 
 $T_{f'}$ represented in Figure \ref{morphism_to_skein_prime}.
 Now, using the notation of Proposition \ref{P:HomG}, we have 
 \[
  \Phi' \circ \Psi' = \Theta'.
 \]
 Then $\Phi'$ is surjective because $\Theta'$ is an isomorphism. \qedhere
 
 \begin{figure}[htb]
  \centering
  \includegraphics{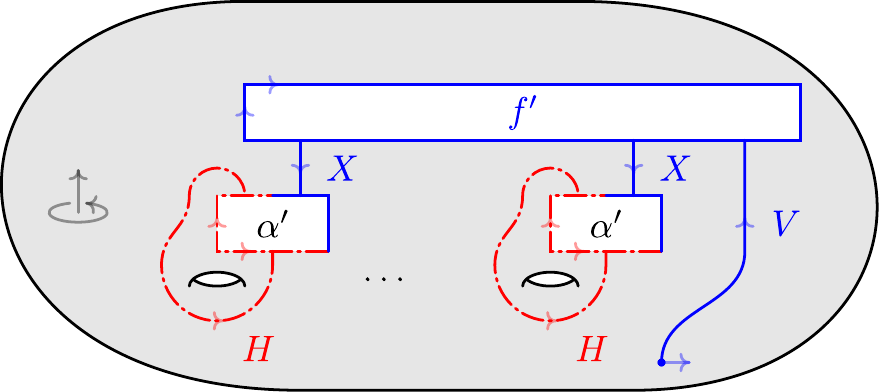}
  \caption{The admissible $\calC$-colored bichrome graph $T_{f'}$.}
  \label{morphism_to_skein_prime}
 \end{figure}
 
\end{proof}

\FloatBarrier

\begin{proposition}\label{P:pair}
 Every $\calC$-colored bichrome graph $T'$ in $\calV'(M'_g;\bbSigma_{g,V})$ and every 
 admissible $\calC$-colored bichrome graph $T$ in $\calV(M_g;\bbSigma_{g,V,H})$ satisfy
 \[
  \brk{T',\calE(T)}_{S^3} = \brk{\Phi' (T'),\Phi(T)}_{\calS} = \brk{\Theta'^{-1}(\Phi' (T')),
  \Theta^{-1}(\Phi(T))}_{\calX}.
 \]
\end{proposition}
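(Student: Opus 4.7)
The second equality is an immediate application of Proposition~\ref{P:translation_of_pairings} to the morphisms $f' := \Theta'^{-1}(\Phi'(T')) \in \calX'_{g,V}$ and $f := \Theta^{-1}(\Phi(T)) \in \tilde{\calX}_{g,V}$. So the substance of the statement is the first equality, which I will prove by exhibiting both pairings as $\rmt_H$ of the same endomorphism of $H$ coming from a cutting presentation of a common closed bichrome graph.

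Unwinding the definition of $\calE$, the left-hand side equals
\[
 \brk{T', \calE(T)}_{S^3} = F'_\lambda\bp{T' \cup_{P_V} (T \cup T_\Lambda)},
\]
the renormalized invariant of a closed admissible $\calC$-colored bichrome graph sitting in $S^3$ via the Heegaard splitting $M_g \cup_{\Sigma_g} M'_g = S^3$ augmented with the collar $I \times \Sigma_g$ carrying $T_\Lambda$. The key feature of $T_\Lambda$ (Figure~\ref{open_skein}) is that it contains a blue $H$-colored edge bearing a $\Lambda$-coupon. Taking this edge as the cut edge in the cutting presentation of $F'_\lambda$, the left-hand side becomes $\rmt_H$ of the $F_\lambda$-evaluation of the resulting endomorphism of $H$.

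The plan is then to identify this cut-open presentation with the bichrome graph $T_{\calS, \Phi(T), \Phi'(T')}$ of Figure~\ref{F:algebraic_pairing_S}. Flattening $M_g \cup_{\Sigma_g} (I \times \Sigma_g)$ via the standard embedding $\iota_g$ produces, by the very definition of $\Phi$ (Figure~\ref{skein_to_morphism}), the $g$-string link graph $\Phi(T)$ together with an extra blue $H$-strand terminating at a $\Lambda$-coupon. Stacking on top the $g$-string link $\Phi'(T')$ obtained from $\iota'_g$ applied to $M'_g$, and closing off the $g$ pairs of red endpoints and the $V$-colored endpoints in the pattern dictated by the Heegaard gluing, one recovers precisely the graph appearing in Figure~\ref{F:algebraic_pairing_S}.

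The main topological content of the argument lies in checking that this closure procedure exactly reproduces the one in the definition of $\brk{\Phi'(T'), \Phi(T)}_{\calS}$. This is a direct geometric verification in the spirit of the identifications performed in the proofs of Propositions~\ref{surjectivity_proposition} and~\ref{surjectivity_proposition'}: the $g$ handles of the Heegaard splitting become the braid-closed red strands of the $g$-string link, the blue $P_V$ boundary point becomes the blue $V$-colored strand connecting the two flattened pieces, and the $\Lambda$-coupon coming from $T_\Lambda$ furnishes the cut edge of color $H$ on which $\rmt_H$ is then evaluated. Once this identification is in place, the first equality follows by applying $\rmt_H$ to both sides.
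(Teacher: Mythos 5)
Your handling of the second equality is correct and is exactly what the paper does: apply Proposition \ref{P:translation_of_pairings} to $f=\Theta^{-1}(\Phi(T))$ and $f'=\Theta'^{-1}(\Phi'(T'))$. The gap is in the first equality, precisely at the step you label a ``direct geometric verification''. You propose to flatten $M_g\cup_{\Sigma_g}(I\times\Sigma_g)$ and $M'_g$ by the standard embeddings $\iota_g$, $\iota'_g$, stack $\Phi'(T')$ on top of $\Phi(T)$, and recover the pairing graph by braid-closing the $g$ red strands ``in the pattern dictated by the Heegaard gluing''. But the Heegaard gluing of $S^3$ is not a stacking: $M_g$ and $M'_g$ sit in $S^3$ as complementary, linked handlebodies, glued so that meridians of one bound disks in the other. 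Plainly closing up the red strands of $\Phi'(T')\circ\Phi(T)$ would compute the invariant of the manifold obtained by gluing the two handlebodies through a product region, i.e.\ essentially the double $\#_{g}(S^1\times S^2)$, not of $S^3$, and it does not reproduce the graph $T_{\calS,\Phi(T),\Phi'(T')}$ of Figure \ref{F:algebraic_pairing_S}: that graph contains red Hopf link components which your closure procedure never produces.

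The paper supplies exactly this missing ingredient. It computes $\brk{T',\calE(T)}_{S^3}$ not from the naive presentation but from the surgery presentation of Figure \ref{surgery_presentation}, where red Hopf links are inserted to disentangle the two complementary handlebodies of the Heegaard splitting (Figure \ref{F:Heegaard_splitting}); passing to this presentation is licensed by the invariance of $\Hm$ under change of surgery presentation (Theorem \ref{T:3ManInvMainT}). Only after this move does cutting open the bottom $H$-colored blue edge (your $\Lambda$-coupon edge) yield literally $T_{\calS,\Phi(T),\Phi'(T')}$, after which the definition of $\brk{\cdot,\cdot}_{\calS}$ and Proposition \ref{P:translation_of_pairings} finish the argument. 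To repair your proof you must either invoke this surgery presentation, or, starting from the unsurgered picture, reintroduce the Hopf links by reversing the handle slides used in the proof of Proposition \ref{P:translation_of_pairings}; as written, the claimed identification with Figure \ref{F:algebraic_pairing_S} does not hold, and this identification is the whole content of the first equality.
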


\begin{proof}
 We can compute $\brk{T',\calE(T)}_{S^3}$ using the surgery presentation of Figure \ref{surgery_presentation} for the morphism 
 \[
  (M_g \cup_{\Sigma_g} (I \times \Sigma_g) \cup_{\Sigma_g} M'_g ,\calE(T) \cup_{P_V} T',0)
 \]
 of $\adCob_{\calC}$.  
 
 \begin{figure}[ht]
  \centering
  \includegraphics{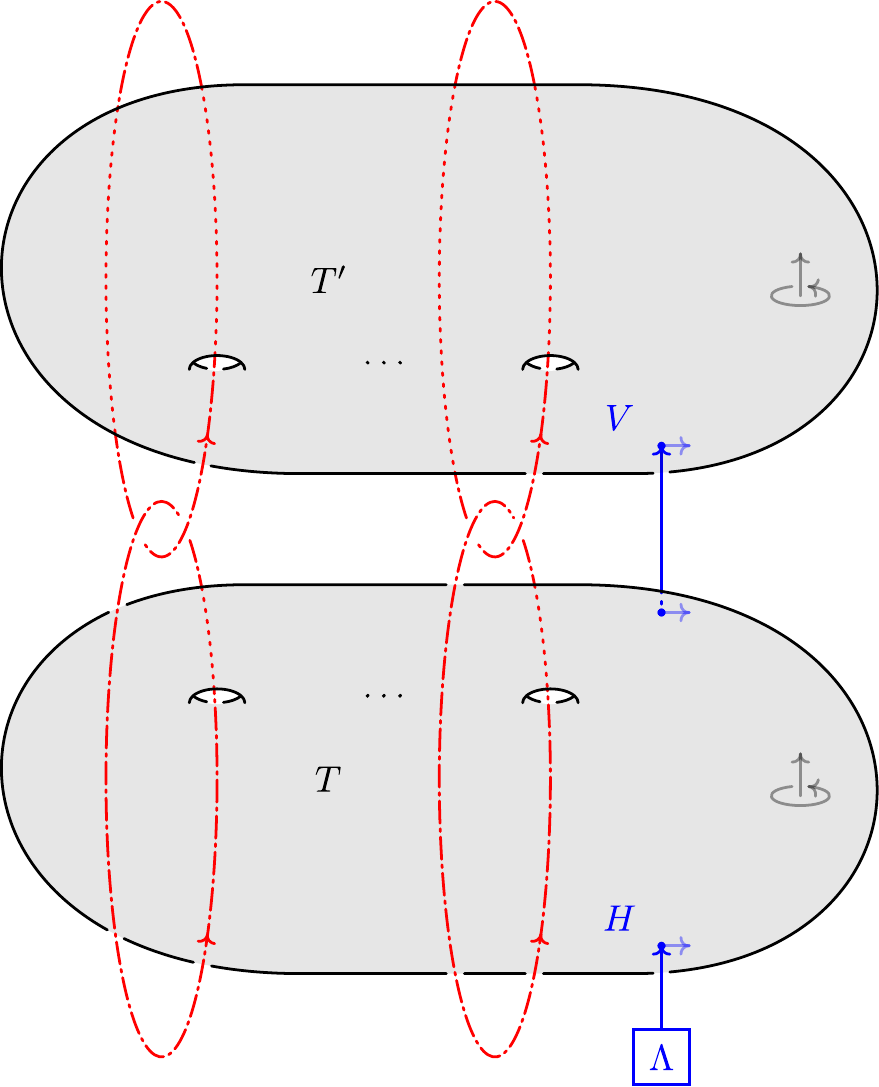}
  \caption{Surgery presentation computing $\brk{T',\calE(T)}_{S^3}$.}
  \label{surgery_presentation}
 \end{figure}
 
 To see this is a surgery presentation of $S^3$ we can start with the Heegaard splitting $M_g \cup_{\Sigma_g} M'_g$ and remark that surgery on the red Hopf links allows us to disentangle
 the two handlebodies, see Figure \ref{F:Heegaard_splitting}.
 
 \begin{figure}[ht]
  \centering
  \includegraphics{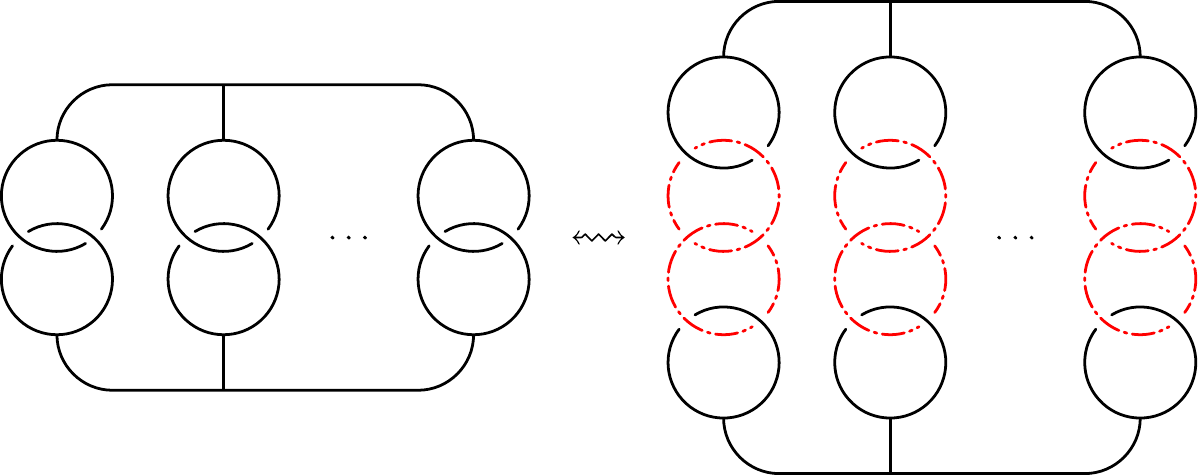}
  \caption{The complementary handlebodies $M_g$ and $M'_g$ can be realized as tubular neighborhoods of the two trivalent graphs in $S^3$ represented on the left. Surgery on the red Hopf links allows us to disentangle them.}
  \label{F:Heegaard_splitting}
 \end{figure}
 
 Then, in order to compute 
 \[
  \Hm \left( M_g \cup_{\Sigma_g} (I \times \Sigma_g) \cup_{\Sigma_g} M'_g ,\calE(T) \cup_{P_V} T',0 \right),
 \]
 we can consider $\calE(T) \cup_{P_V} T'$ and cut open the bottom $H$-colored blue edge in Figure \ref{surgery_presentation}. This will result in a $\calC$-colored bichrome graph from $(H,+)$ to itself which, in the notation of Figure \ref{F:algebraic_pairing_S}, is precisely $T_{\calS,\Phi(T),\Phi' (T')}$. Now the definition of the pairing $\brk{\cdot,\cdot}_{\calS}$, together with Proposition \ref{P:translation_of_pairings}, gives the result.
\end{proof}

If $\Sigma$ is a connected surface then let $g(\Sigma)$ denote its genus.
If $P = P_1 \cup \ldots \cup P_k$ is a $\calC$-colored blue set inside a $\Sigma$ 
with $P_i$ given by a single point having orientation $\epsilon_i$ and color $V_i$ for every $i \in \{1,\ldots,k\}$, then 
let $F_{\calC}(P)$ denote the object $V_1^{\epsilon_1} \otimes \cdots \otimes V_k^{\epsilon_k}$ of $\calC$ where
$V_i^{+} := V_i$ and $V_i^{-} := V_i^*$. Remark that the isomorphism class of
$F_{\calC}(P)$ is independent of the ordering of $P$.

\begin{corollary}\label{C:identification}
 If $\bbSigma = (\Sigma,P,\calL)$ is a connected object of $\adCob_{\calC}$ 
 then there exist isomorphisms $\rmV_{\calC}(\bbSigma) \cong \calX_{g(\Sigma),F_{\calC}(P)}$ and
 $\rmV'_{\calC}(\bbSigma) \cong \calX'_{g(\Sigma),F_{\calC}(P)}$ which are compatible with
 the pairings $\langle \cdot,\cdot \rangle_{\bbSigma}$ and $\langle \cdot,\cdot \rangle_{\calX}$.  
\end{corollary}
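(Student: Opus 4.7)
The plan is to reduce to the standard model $\bbSigma_{g,V}$ with $g = g(\Sigma)$ and $V = F_\calC(P)$, and then to identify the TQFT spaces with the algebraic ones using the maps $\calE$, $\Phi$ and $\Phi'$ constructed above. Since $\bbSigma$ is connected, I can isotope $P$ into a small disk in $\Sigma$ and insert a blue ribbon graph with evaluation and coevaluation coupons in a collar $I \times \Sigma$ that merges all $k$ marked points into a single one of color $V_1^{\epsilon_1} \otimes \cdots \otimes V_k^{\epsilon_k} = F_\calC(P)$. The resulting pair of mutually inverse cobordisms induces isomorphisms $\rmV_\calC(\bbSigma) \cong \rmV_\calC(\bbSigma_{g,V})$ and $\rmV'_\calC(\bbSigma) \cong \rmV'_\calC(\bbSigma_{g,V})$ compatible with the pairings, so it is enough to handle the standard case.

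For $\rmV_\calC(\bbSigma_{g,V})$ I would combine the two surjections
\[
 \calE : \calV(M_g;\bbSigma_{g,V,H}) \twoheadrightarrow \rmV_\calC(\bbSigma_{g,V}), \qquad \Phi : \calV(M_g;\bbSigma_{g,V,H}) \twoheadrightarrow \tilde{\calS}_{g,V}
\]
provided by Propositions \ref{open_skein_compute_TQFT} and \ref{surjectivity_proposition}. The key observation is that $\calE(T) = 0$ in $\rmV_\calC(\bbSigma_{g,V})$ if and only if $\brk{T',\calE(T)}_{S^3} = 0$ for every $T' \in \calV'(M'_g;\bbSigma_{g,V})$, thanks to Proposition \ref{triviality} applied to the connected cobordism $M'_g : \Sigma_g \to \varnothing$. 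By Proposition \ref{P:pair} this quantity equals $\brk{\Phi'(T'),\Phi(T)}_\calS$, and since $\Phi'$ surjects onto $\calS'_{g,V}$ by Proposition \ref{surjectivity_proposition'}, vanishing for all $T'$ is equivalent to $\Phi(T)$ lying in the right radical of $\brk{\cdot,\cdot}_\calS$, i.e.\ to $\Phi(T)$ representing zero in $\calS_{g,V}$. Therefore $\calE$ and the composite $\calV(M_g;\bbSigma_{g,V,H}) \xrightarrow{\Phi} \tilde{\calS}_{g,V} \twoheadrightarrow \calS_{g,V}$ have the same kernel, inducing isomorphisms $\rmV_\calC(\bbSigma_{g,V}) \cong \calS_{g,V} \cong \calX_{g,V}$, the last one coming from Propositions \ref{P:HomG} and \ref{P:translation_of_pairings}.

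For $\rmV'_\calC(\bbSigma_{g,V})$ the argument is dual. First I would establish the primed analog of Proposition \ref{connection+skein}: the natural map $\calV'(M'_g;\bbSigma_{g,V}) \to \rmV'_\calC(\bbSigma_{g,V})$ is surjective. This follows from the same skein moves, projective trick, and index $1$, $2$ surgery reductions as in the original proof, and is in fact simpler on the primed side because admissibility is not imposed on components of $M'$ meeting the incoming boundary. Then for $T' \in \calV'(M'_g;\bbSigma_{g,V})$, the class $[T']$ vanishes in $\rmV'_\calC(\bbSigma_{g,V})$ if and only if $\brk{T',\calE(T)}_{S^3} = 0$ for every $T$ (using surjectivity of $\calE$), if and only if $\brk{\Phi'(T'),\Phi(T)}_\calS = 0$ for every $T$ by Proposition \ref{P:pair}, if and only if $\Phi'(T')$ lies in the left radical of $\brk{\cdot,\cdot}_\calS$ by surjectivity of $\Phi$. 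Under the isomorphisms of Proposition \ref{P:HomG}, this radical corresponds (via Proposition \ref{P:translation_of_pairings}) to the left radical of $\brk{\cdot,\cdot}_\calX$, which was shown to be trivial immediately after the definition of the pairing. Hence $\Phi'$ descends to an isomorphism $\rmV'_\calC(\bbSigma_{g,V}) \cong \calS'_{g,V} \cong \calX'_{g,V}$.

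The compatibility of the two pairings under these identifications is then precisely the content of Propositions \ref{P:pair} and \ref{P:translation_of_pairings}. The main technical point I expect to be an obstacle is the primed analog of Proposition \ref{connection+skein}, which is not stated in the text but is needed to pass from the surjection $\Phi'$ on graphs to a surjection on the TQFT space $\rmV'_\calC(\bbSigma_{g,V})$; the initial reduction from a general connected $\bbSigma$ to the standard $\bbSigma_{g,V}$ by merging marked points is routine but somewhat notationally heavy.
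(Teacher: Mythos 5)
Your proposal is correct and follows essentially the same route as the paper: first reduce to a single marked point of color $F_{\calC}(P)$ via a merging cylinder, then identify both TQFT spaces with quotients of the graph spaces in the two Heegaard handlebodies using $\calE$, $\Phi$, $\Phi'$ together with Propositions \ref{connection+skein}, \ref{triviality}, \ref{P:pair} and \ref{P:translation_of_pairings}. The only step you flag as a possible obstacle, the primed analogue of Proposition \ref{connection+skein}, is indeed used implicitly in the paper's own proof as well, and your sketch of it (the same surgery and skein reductions, with admissibility even less restrictive on the primed side) is the intended argument.
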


\begin{proof}
 Let $\gamma \subset \Sigma$ be a separating curve which cuts a disc $D$ containing $P$ from $\Sigma$. Let $\tilde{P}$
 denote a $\calC$-colored blue set inside $D$ given by a single point with positive orientation and color $F_{\calC}(P)$.
 Let $\tilde{\bbSigma}$ denote the object $(\Sigma,\tilde{P},\calL)$ and let us consider the morphism 
 $\bbI \times \bbSigma : \bbSigma \rightarrow \tilde{\bbSigma}$ given by $(I \times \Sigma,T_{\id},0)$
 where $T_{\id} \subset I \times \Sigma$ is the $\calC$-colored bichrome graph from $P$ to $\tilde{P}$ represented in Figure \ref{cylinder}
 inside $I \times D$. 
 Then $\rmV_{\calC}(\bbI \times \bbSigma) : \rmV_{\calC}(\bbSigma) \rightarrow \rmV_{\calC}(\tilde{\bbSigma})$ is an isomorphism.
 But now if $M \cup_{\Sigma} M'$ is a Heegard splitting of $S^3$ and if
 \[
  \langle \cdot , \cdot \rangle_{S^3} : \calV'(M';\tilde{\bbSigma}) \times \calV(M;\tilde{\bbSigma}) \rightarrow \Bbbk
 \]
 is the associated bilinear form then both $\rmV_{\calC}(\tilde{\bbSigma})$ and $\calX_{g(\Sigma),F_{\calC}(P)}$ 
 are isomorphic to the quotient of $\calV(M;\tilde{\bbSigma})$ with respect to the right radical of $\langle \cdot , \cdot \rangle_{S^3}$
 and both $\rmV'_{\calC}(\tilde{\bbSigma})$ and $\calX'_{g(\Sigma),F_{\calC}(P)}$ 
 are isomorphic to the quotient of $\calV'(M';\tilde{\bbSigma})$ with respect to the left radical of $\langle \cdot , \cdot \rangle_{S^3}$.

 \begin{figure}[tb]
  \centering
  \includegraphics{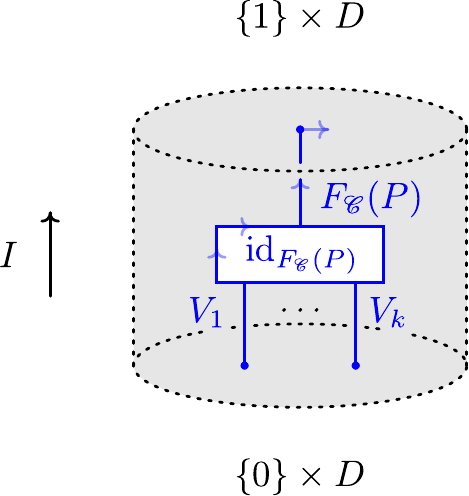}
  \caption{The $\calC$-colored bichrome graph $T_{\id}$.}
  \label{cylinder}
 \end{figure}
\end{proof}

\begin{remark}\label{R:Dehn_twist}
 Dehn twists act on TQFT spaces like curve operators. Indeed, let $\bbSigma = (\Sigma,P,\calL)$ be an object of $\adCob_{\calC}$ and let $\gamma \subset \Sigma$ be an oriented simple closed curve. If $I \times \Sigma$ denotes the cylinder cobordism from $\Sigma$ to itself, let $K_{\gamma} \subset I \times \Sigma$ denote the red knot $\{ \frac 12 \} \times \gamma$ with framing determined by the homology class $[\ell + m] \in H_1(\partial N)$, where then $N$ is a tubular neighborhood of $\{ \frac 12 \} \times \gamma$, where $m$ is a positive meridian of $\partial N$ in $N$, and where $\ell$ is a positive longitude of $\partial N$ contained in $\partial N \cap (I \times \gamma)$. Then the action of a Dehn twist along $\gamma$ on a vector $[\bbM'_{\bbSigma}]$ in $\rmV'_{\calC}(\bbSigma)$ is given by 
 \[
  [\bbM'_{\bbSigma} \circ (I \times \Sigma,K_{\gamma},0)].
 \]
\end{remark}

\FloatBarrier

\section{Examples and related constructions}\label{SS:Examples3Man}
In Section \ref{S:3-manifold_invariants} we constructed a 3-manifold invariant from a finite-dimensional non-degenerate unimodular 
ribbon Hopf algebra. If this Hopf algebra is factorizable then in Section \ref{S:TQFT} we showed the 3-manifold invariant extends to 
a $2+1$-TQFT. As discussed in the introduction, such Hopf algebras have been studied at length. In this section we highlight a 
few examples. We also relate our 3-manifold invariant to the logarithmic Hennings invariant of \cite{BBG17a} and to the
Generalized Kashaev invariant of \cite{M17}.

\subsection{Drinfeld doubles} 

An important family of examples is provided by Drinfeld doubles of finite-dimensional Hopf algebras. 
If $H$ is a finite-dimensional Hopf algebra, then it is well known that its Drinfeld double $D(H)$
is always factorizable, and in particular non-degenerate unimodular. See \cite{D88,EGNO15,H96,R11} 
for references. Furthermore, by Kauffman and Radford \cite{KR93}, it is also known precisely when a Drinfeld double is a ribbon 
Hopf algebra. Indeed, a right integral $\lambda \in H^*$ and a two-sided cointegral $\Lambda \in H$ satisfying $\lambda(\Lambda) = 1$
uniquely determine elements $a \in H$ and $\alpha \in H^*$ satisfying
\begin{gather*}
 f \lambda = f(a) \cdot \lambda, \quad \Lambda x = \alpha(x) \cdot \Lambda, \quad \epsilon(a) = \alpha(1_H) = 1, \\
 S^4(x) = \alpha^{-1}(x_{(1)})\alpha(x_{(3)}) \cdot ax_{(2)}a^{-1}
\end{gather*}
for all $f \in H^*$ and $x \in H$. Then $D(H)$ is ribbon if and only if there exist 
elements $g \in H$ and $\gamma \in H^*$ satisfying
\begin{gather*}
 g^2 = a, \quad \gamma^2 = \alpha, \quad \epsilon(g) = \gamma(1_H) = 1, \\
 S^2(x) = \gamma^{-1}(x_{(1)})\gamma(x_{(3)}) \cdot gx_{(2)}g^{-1}
\end{gather*}
for every $x \in H$. 

\subsection{Quantum groups}\label{SS:small_quantum_groups}

Many examples of the type of Hopf algebras we are considering come from quantum groups.  
In particular, to each simple Lie algebra one can associate several finite-dimensional quantum groups depending on many ingredients,
including the choice of a root of unity. These factors determine whether the quantum group is ribbon and/or factorizable, 
for details see \cite{L95b,LN15,LO16,KS97}. In this subsection, we will discuss finite-dimensional quantum groups associated to $\sl_2$, 
which have different properties depending on the order of the root of unity.

Let us fix $r \geq 3$ and let us choose the primitive $r$-th root of unity $q = e^{\frac{2 \pi i}{r}}$.
Let us set $\bar{r} := r$ if $r$ is odd and $\bar{r} := \frac{r}{2}$ if $r$ is even.
We also introduce for all $k \geq \ell \in \N$ the standard notation 
\[
 \{ k \} := q^k - q^{-k},
 \quad [k] := \frac{\{ k \}}{\{ 1 \}},
 \quad [k]! := [k][k-1]\cdots[1].
\]
We denote with $\bar{U}_q \sl_2$ the $\C$-algebra with generators
$\{ E,F,K,K^{-1} \}$ and relations
\begin{gather*}
 K K^{-1} = K^{-1} K = 1, \quad K E K^{-1} = q^2 E, \quad K F K^{-1} = q^{-2} F, \\
 [E,F] = \frac{K - K^{-1}}{q-q^{-1}}, \quad E^{\bar{r}} = F^{\bar{r}} = 0, \quad K^r = 1.
\end{gather*}
We can make $\bar{U}_q \sl_2$ into a Hopf algebra by setting
\begin{align*}
 \Delta(E) &= E \otimes K + 1 \otimes E, & \varepsilon(E) &= 0, & S(E) &= -E K^{-1}, \\
 \Delta(F) &= F \otimes 1 + K^{-1} \otimes F, & \varepsilon(F) &= 0, & S(F) &= - K F, \\
 \Delta(K) &= K \otimes K, & \varepsilon(K) &= 1, & S(K) &= K^{- 1}.
\end{align*}

A Poincaré-Birkhoff-Witt basis for $\bar{U}_q \sl_2$ is given by
\[
 \left\{ E^b F^c K^m \left| \ 0 \leq b,c \leq \bar{r} - 1, \ 0 \leq m \leq r - 1 \right. \right\}.
\]
A right integral $\lambda$ of $\bar{U}_q \sl_2$ is determined by
\[
\lambda \left( E^b F^c K^m \right) = \xi \delta_{b,\bar r-1} \delta_{c,\bar r-1} \delta_{m,r-\bar r+1}
\]
where $\xi$ is any non-zero constant.  
A two-sided cointegral $\Lambda$ of $\bar{U}_q \sl_2$ is given by
\[
  \frac{1}{\xi} \cdot \sum_{m = 0}^{r - 1}E^{\bar{r}-1} F^{\bar{r}-1} K^m =
  \frac{1}{\xi} \cdot \sum_{m = 0}^{r - 1} F^{\bar{r}-1}E^{\bar{r}-1} K^m.
\]
In particular, $\bar{U}_q \sl_2$ is always unimodular.
A pivotal element $g \in \bar{U}_q \sl_2$ is given by $K^{\bar{r}+1}$.
Furthermore, as shown in \cite{M95}, when $r$ is odd $\bar{U}_q \sl_2$ is also ribbon and factorizable.
Indeed, an R-matrix $R \in \bar{U}_q \sl_2 \otimes \bar{U}_q \sl_2$ is given by
\[
 \frac{1}{r} \cdot \sum_{b,\ell,m = 0}^{r - 1} \frac{\{ 1 \}^b}{[b]!}
 q^{\frac{b(b-1)}{2} + 2(b(\ell - m) - \ell m)} \cdot E^b K^\ell \otimes F^b K^m,
\]
and a ribbon element $v \in \bar{U}_q \sl_2$ is given by
\[
 \frac{1}{r} \cdot \sum_{b,\ell,m = 0}^{r - 1} \frac{\{ 1 \}^b}{[b]!} q^{\frac{b(b-1)}{2} - \frac{(r+1)(b-m-1)^2}{2} + 2(\ell^2-bm)} 
 \cdot E^b F^b K^m.
\]
Thus, when $r$ is odd, $\bar{U}_q\sl_2$ gives rise to a TQFT as in Section \ref{S:TQFT}.  

For odd values of $r$ we call $\bar{U}_q \sl_2$ the \textit{small quantum group of $\sl_2$}, while for even values of $r$ we call it the 
\textit{restricted quantum group of $\sl_2$}.

\subsection{Logarithmic Hennings invariants}\label{SS:Log_Hennings}
The logarithmic Hennings invariant constructed in \cite{BBG17a} is based on the restricted version of the quantum group of 
$\sl_2$, that is $\bar{U}_q\sl_2$ when $r$ is even. 
This Hopf algebra is not quasi-triangular and so the construction of this paper does not immediately apply. 
However, the restricted quantum group of $\sl_2$ admits a ribbon extension $D$ which is unimodular and non-degenerate,
and which therefore allows for the construction of our 3-manifold invariant. As we will explain next, this 3-manifold invariant 
recovers the logarithmic Hennings invariant. Note that, since $D$ is not factorizable,
our $2+1$-TQFT construction does not directly apply to the restricted case. 

Let us recall the definition of the ribbon extension $D$ of $\bar{U}_q \sl_2$ when $r = 2p$ for the choice of the 
primitive $2p$-th root of unity $q = e^{\frac{\pi i}{p}}$: let $D$ be the $\C$-algebra with generators $\{ e,f,k,k^{-1} \}$ and relations
\begin{gather*}
 k k^{-1} = k^{-1} k = 1, \quad k e k^{-1} = q e, \quad k f k^{-1} = q^{-1}f, \\
 [e,f] = \frac{k^2 - k^{-2}}{q-q^{-1}}, \quad e^p = f^p = 0, \quad k^{4p} = 1.
\end{gather*}
It can be made into a Hopf algebra by setting
\begin{align*}
 \Delta(e) &= e \otimes k^2 + 1 \otimes e, & \varepsilon(e) &= 0, & S(e) &= -e k^{-2}, \\
 \Delta(f) &= f \otimes 1 + k^{-2} \otimes f, & \varepsilon(f) &= 0, & S(f) &= - k^2 f, \\
 \Delta(k) &= k \otimes k, & \varepsilon(k) &= 1, & S(k) &= k^{- 1}.
\end{align*}
The restricted quantum group $\bar{U}_q \sl_2$ embeds into $D$ by sending $E$ to $e$, $F$ to $f$ and $K$ to $k^2$.  
We denote $U$ as the  image of $\bar{U}_q \sl_2$ in $D$.
A Poincaré-Birkhoff-Witt basis for $D$ is given by
\[
 \left\{ e^b f^c k^m \left| \ 0 \leq b,c \leq p - 1, \ 0 \leq m \leq 4p - 1 \right. \right\}.
\]
A right integral $\lambda$ of $D$ is determined by
\[
 \lambda \left( e^b f^c k^m  \right) = \xi \delta_{b,p-1} \delta_{c,p-1} \delta_{m,2p + 2}.
\]
Following \cite{M17,BBG17a}, we choose the normalization 
 \[
  \xi = \sqrt{\frac{2}{p}}([p-1]!)^2.
 \]
A two-sided cointegral $\Lambda$ of $D$ is given by
\[
 \frac{1}{\xi} \cdot \sum_{m = 0}^{4p - 1} e^{p-1} f^{p-1} k^m .
\]
In particular, $D$ is unimodular. A pivotal element $g \in D$ is given by $k^{2p+2}$.
An R-matrix $R \in D \otimes D$ is given by 
\[
 \frac{1}{4p} \cdot \sum_{b=0}^{p-1} \sum_{\ell,m = 0}^{4p - 1} 
 \frac{\{ 1 \}^b}{[b]!} q^{\frac{b(b-1)}{2} + b(\ell - m) - \frac{\ell m}{2}} \cdot e^b k^\ell \otimes f^b k^m,
\]
where $q^{\frac{1}{2}} := e^{\frac{\pi i}{2p}}$. A ribbon element $v \in D$ is given by 
\[
 \frac{1 - i}{2 \sqrt{p}} \cdot \sum_{b = 0}^{p - 1} \sum_{m=0}^{2p-1} \frac{\{ 1 \}^b}{[b]!} 
 q^{-\frac{b(2m+1)}{2} + \frac{(m-p-1)^2}{2}} \cdot e^b f^b k^{2m}.
\]
An easy computation shows that
\[
 \Delta_- = \lambda(v) = \frac{1 - i}{\sqrt{2} p} \{ 1 \}^{p-1}[p-1]! q^{-\frac{(p-1)(2p+3)}{2}} \neq 0.
\]
An analogous computation shows that $\Delta_+ = \lambda(v^{-1}) \neq 0$.
Therefore, $D$ is non-degenerate.  

Summarizing the above we have that $D$ is a finite-dimensional non-degenerate unimodular ribbon Hopf algebra.  
Thus, Theorem \ref{T:3ManInvMainT} implies there exists a renormalized Hennings invariant $\Hm$ associated with $\calC=D$-mod.  
By restricting the integral of $D$ to the subalgebra $U$ we recover the formulas
given in the previous subsection and in \cite{BBG17a}.

Next, we will explain how $\Hm$ is related to the logarithmic Hennings invariant $\rmH^{\log}$ of \cite{BBG17a}.
The latter is an invariant of closed 3-manifolds endowed with links which are labeled with elements of the Hopf algebra $U$ as follows.
Let $L = L_+ \cup L_-$ be a link inside a closed 3-manifold $M$.
Each component of $L_-$ is labeled with an element of the 0th-Hochschild homology $\HH_0(U) = U/[U,U]$ of $U$, 
where $[U,U]=\Span \{ xy - yx | x,y \in U \}$.
Each component of $L_+$ is labeled with an element of the center $\rmZ(U)$ of $U$.  
The invariant $\rmH^{\log}(M,L_+ \cup L_-)$ is defined using the underlying ribbon structure of $D$, but the result only 
depends on $U$ since the M-matrix $R_{21}R_{12}$ is in $U \otimes U$ and the ribbon element $v$ is in $U$.  
We will now associate with $L_+ \cup L_-$ a $\calC$-colored $n$-string link $L'_+ \cup L'_-$.   
Remark that every $z \in \rmZ(U)$ is also central in $D$ under inclusion, so that $L_z : D \to D$ is a morphism in the category $\calC=D$-$\mod$.   
Let $L'_+$ be the $\calC$-colored red graph obtained from $L_+ \subset L_+ \cup L_-$ by adding to every $z$-colored 
component a $(1,1)$-coupon colored with $L_z$.
On the other hand, for any $x \in D$ the right multiplication $R_x : D \to D$ is a morphism in $\calC$.   
Let $L'_-$ be the $\calC$-colored blue graph obtained from $L_- \subset L_+\cup L_-$ by adding to every $[x]$-colored 
component a $(1,1)$-coupon colored with $R_x$ where $x \in U$ is any representative of $[x] \in \HH_0(U)=U/[U,U]$.
All edges of $L'_+ \cup L'_-$ are colored with the regular representation $D$.  
Properties of the modified trace imply that $\Hm(M,L'_+ \cup L'_-)$ does not depend on the choice of the representative $x$ of 
$[x] \in \HH_0(U)$.  
From the definition of the invariants we have the equality  
\[
 \Hm(M,L'_+ \cup L'_-)=\rmH^{\log}(M,L_+\cup L_-).
\]

\subsection{Generalized Kashaev invariants}\label{SS:Gen_Kashaev}

In \cite{M17}, Jun Murakami defines a generalized Kashaev invariant of a link $K$ in a 3-manifold 
$M$ by combining the Hennings invariant associated with the restricted version $\bar{U}_q \sl_2$ of quantum $\sl_2$ 
and the ADO invariant associated with the medium version $\tilde{U}_q \sl_2$ of quantum $\sl_2$, see \cite{ADO92, M17}. 
These two theories overlap with the use of the Steinberg-Kashaev
module $V_0$. At a primitive $2p$-th root of unity, this is a $p$-dimensional simple projective module of both $\bar{U}_q \sl_2$ and 
$\tilde{U}_q \sl_2$. 

Our functors $F_{\lambda}$ are a generalization of Murakami's invariant $\wt{\GK}_p $:  
first, let $M$ be represented by surgery along a framed link $L$ and let $\widehat{K}$ be the pre-image of $K$ in $S^3$. 
Let $T$ be the $(1,1)$-tangle obtained from $L \cup \widehat{K}$ by cutting open one of the components of
$\widehat{K}$.
Let $T'$ be the $\calC$-colored $n$-string tangle determined by $T$ by declaring every component of $L$ to be red and 
every component of $\widehat{K}$ to be blue and colored with $V_0$. Then by construction
\[
 \wt{\GK}_p(T)=F_{\lambda}(T'),
\]
and $\wt{\GK}_p(T)$ does not depend on the component of $K$ which is cut, see Theorem 5 of \cite{M17}.  
Our Theorem \ref{T:F'Exists} gives a new proof of this fact.  
Note that in this case the renormalization given by the modified trace only changes $F_{\lambda}$ by a global constant 
because each component of $\widehat{K}$ is colored with the same module $V_0$.  

As in Subsection \ref{3-manifold_invariants}, Murakami uses standard techniques to scale $\wt{\GK}_p(T)$ 
and construct an invariant ${\GK}_p$ of the pair $(M,K)$.    
Thus, after a global normalization, we have that ${\GK}_p(M,K)$ is equal to $\Hm(M,T')$.

\end{document}